\definecolor{darkred}{rgb}{0.9,0.1,0.1}
\newtheorem{theo}{Theorem}[section]
\newtheorem{coro}[theo]{Corollary}
\newtheorem{lem}[theo]{Lemma}
\newtheorem{Rq}[theo]{Remark}
\theoremstyle{plain}
\newtheorem*{hypo*}{Assumption}
\newcommand{\un}{\mathbf{1}} 
\newcommand{\N}{\mathbb{N}}                                              
\newcommand{\R}{\mathbb{R}}                                              
\newcommand{\p}{\mathbb{P}}  
\newcommand{\E}{\mathbb{E}}                                            
\newcommand{\var}{\textrm{Var}}  
\newcommand{\cov}{\textrm{Cov}}
\title[Fleming-Viot particle system and quasi-stationary distributions]{Quantitative results for the Fleming-Viot particle system and quasi-stationary distributions in discrete space} %
\date{\today}
\author{Bertrand~\textsc{Cloez}}  %
\address[Bertrand~\textsc{Cloez}]{Universit\'e de Toulouse, Institut de Math\'ematiques de Toulouse, CNRS UMR5219, France}%
\email{\url{bertrand.cloez@univ-toulouse.fr}}%
\author{Marie-No\'{e}mie~\textsc{Thai}} %
\address[Marie-No\'{e}mie~\textsc{Thai}]{Laboratoire d'Analyse et de Math\'ematiques
  Appliqu\'ees, CNRS UMR8050, Universit\'e Paris-Est Marne-la-Vall\'ee,
  France} %
\email{\url{marie-noemie.thai@univ-paris-est.fr}}
\thanks{Partially supported by ANR-11-LABX-0040-CIMI within the program ANR-11-IDEX-0002-02}
\begin{document}

\maketitle

\begin{abstract} 
We show, for a class of discrete Fleming-Viot (or Moran) type particle systems, that the convergence to the equilibrium is exponential for a suitable Wassertein coupling distance. The approach provides an explicit quantitative estimate on the rate of convergence. As a consequence, we show that the conditioned process converges exponentially fast to a unique quasi-stationary distribution. Moreover, by estimating the two-particle correlations, we prove that the Fleming-Viot process converges, uniformly in time, to the conditioned process with an explicit rate of convergence. We illustrate our results on the examples of the complete graph and of $N$ particles jumping on two points. 
\end{abstract}
{\footnotesize \textbf{AMS 2000 Mathematical Subject Classification:} 60K35, 60B10, 37A25.}

{\footnotesize \textbf{Keywords:} Fleming-Viot process - quasi-stationary distributions - coupling - Wasserstein distance - chaos propagation - commutation relation.}

\renewcommand*\abstractname{ \ }
\begin{abstract}
\tableofcontents
\end{abstract}

\section{Introduction}

This paper deals with a (time-continuous) Moran type model, referred to as the Fleming-Viot process in the literature \cite{BHM00,FM07}, which approximates Markov semigroup conditioned on non-absorption. Briefly, when considering a time-continuous Markov chain, an interesting question is about the quasi-stationary distribution of the process which is killed at some rate, see for instance \cite{CMM,MV12}. Instead of conditioning on non-killing, it is possible to start $N$ copies of the Markov chain and, instead of being killed, one chain jumps randomly on the state of another one. The resulting process is a version of the Moran model that we will call Fleming-Viot. While the convergence of the large-population limit of the Moran model to the quasi-stationary distribution was already shown under some assumptions\cite{MM00,FM07,V11}, the present paper is concerned with deriving bounds for the rate of convergence. Our first main result, namely Theorem \ref{th:tl-general}, establishes the exponential ergodicity of the particle system with an explicit rate. This seems to be a novelty. As a consequence, we prove that the correlations between particles vanish uniformly in time, see Theorem \ref{thm:cor} and Theorem \ref{th:cor-gen}. This is also a new result even if \cite{FM07} gives a similar bound heavily depending on time. As application, we also give new proofs for some more classical but important results as a rate of convergence as $N$ tends to infinity (Theorem \ref{th:chaos}) which can be compared to the results of \cite{MM00,GJ12,V11}, a quantitative convergence  of the conditioned semi-group (Corollary \ref{coro:CV-QSD}) comparable to the results of \cite{MM02,MMV12} and uniform bound (in time) as $N$ tends to infinity (see Corollary \ref{coro: IndTime}), which seems to be new in discrete space but already proven for diffusion processes in \cite{MR06} with an approach based on martingale inequality and spectral theory associated to Schr\"{o}dinger equation.

Let us now be more precise and introduce our model. Let $(Q_{i,j})_{i,j \in F^*}$ be the transition rate matrix of an irreducible and positive recurrent continuous time Markov process on a  discrete and countable state space $F^*$. Set $F=F^* \cup \{ 0 \}$ where $0 \notin F^{*}$ and let $p_0 : F^* \mapsto \R_+$ be a non-null function. The generator of the Markov process $(X_t)_{t\geq 0}$, with transition rate $Q$ and death rate $p_0$, when applied to bounded functions $f:F \mapsto \R$, gives
$$
G f(i) = p_0(i)(f(0)-f(i)) + \sum_{j \in F^{*}} Q_{i,j} (f(j)-f(i)),
$$
for every $i\in F^*$ and $G f(0)=0$. If this process does not start from $0$ then it moves according to the transition rate $Q$ until it jumps to $0$ with rate $p_0$; the state $0$ is absorbing. Consider the process $(X_t)_{t\geq 0}$ generated by $G$ with initial law $\mu$ and denote by $\mu T_t$ its law at time $t$ conditioned on non absorption up to time $t$. That is defined, for all non-negative function $f$ on $F^*$, by
$$
\mu T_t f = \frac{\mu P_t f}{ \mu P_t \un_{\{0\}^c} } = \frac{\sum_{y \in F^{*}} P_t f(y) \mu(y)}{\sum_{y \in F^{*}} P_t \un_{\{0\}^c}(y) \mu(y)},
$$
where $(P_t)_{t\geq 0}$ is the semigroup generated by $G$ and we use the convention $f(0)=0$. For every $x\in F^*$, $k\in F^*$ and  non-negative function $f$ on $F^*$, we also set 
$$T_tf(x) = \delta_x T_t f \ \text{ and } \ \mu T_t (k) = \mu T_t \un_{\{k\}}, \quad \forall t\geq 0.
$$
A quasi-stationary distribution (QSD) for $G$ is a probability measure $\nu_{\textrm{qs}}$ on $F^*$ satisfying, for every $t \geq 0$, $ \nu_{\textrm{qs}} T_t = \nu_{\textrm{qs}}$. The QSD are not well understood, nor easily amenable to simulation. To avoid these difficulties, Burdzy, Holyst, Ingerman, March \cite{BHM00}, and Del Moral, Guionnet, Miclo \cite{DMG,MM00} introduced, independently from each other, a Fleming-Viot or Moran type particle system. This model consists of finitely many particles, say $N$, moving in the finite set $F^*$. Particles are neither created nor destroyed. It is convenient to think of particles as being indistinguishable, and to consider the occupation number $\eta $ with, for $k \in F^*$ , $\eta(k)=\eta^{(N)}(k)$ representing the number of particles at site $k$. Each particle follows independent dynamics with the same law as $(X_t)_{t\geq 0}$ except when one of them hits state $0$; at this moment, this individual jumps to another particle chosen uniformly at random. The configuration $(\eta_t)_{t\geq0}$ is a Markov process with state space $E=E^{(N)}$ defined by
$$
E=\left\{ \eta : F^* \to \N \ | \ \sum_{i \in F^*}  \eta(i) =N \right\}.
$$
Applying its generator to a bounded function $f$ gives
\begin{equation}
\label{eq:generator2}
\mathcal{L}f(\eta)=\mathcal{L}^{(N)}f(\eta) = \sum_{i\in F^{*}} \eta(i) \left[ \sum_{j \in F^{*}} (f(T_{i \rightarrow j} \eta)-f(\eta)) \left(Q_{i,j} + p_{0}(i) \dfrac{\eta(j)}{N-1} \right)\right],
\end{equation}
for every $\eta \in E$, where, if $\eta(i) \neq 0$, the configuration $T_{i \rightarrow j} \eta$ is defined by 
$$
T_{i \rightarrow j} \eta(i) = \eta(i)-1, \ T_{i \rightarrow j} \eta(j) = \eta(j)+1, \ \text{ and } T_{i \rightarrow j} \eta(k) = \eta(k) \quad k \notin \{i,j\}.
$$
For $\eta \in E$, the associated empirical distribution $m(\eta)$ of the particle system is given by
$$
m(\eta)= \dfrac{1}{N}\sum_{k\in F^{*}} \eta(k) \delta_{\{k\}}.
$$
For $\varphi : F^* \to \R$ and $k\in F^*$, we also set $m(\eta)(\varphi)= \sum_{j \in F^*} \varphi(j) m(\eta)(\{j\})$ and $m(\eta)(k)=m(\eta)(\{k\})$. The aim of this work is to quantify (if they hold) the following limits:
\[ 
  \begin{array}{ c c c }
     m(\eta_t^{(N)}) & \underset{ t \rightarrow + \infty}{\stackrel{(a)}{\longrightarrow}} & m(\eta_\infty ^{(N)})\\
      \text{\small{(b)}} \big\downarrow  & \ &  \big\downarrow \text{\small{(c)}}\\     
     m(\eta_0) T_t & \underset{ t \rightarrow + \infty}{\stackrel{(d)}{ \longrightarrow }} & \nu_{\textrm{qs}}
  \end{array} 
\]
where all limits are in distribution and the limits $(b),(c)$ are taken as $N$ tends to infinity. More precisely, Theorem \ref{th:tl-general} gives a bound for the limit $(a)$, Theorem \ref{th:chaos} for the limit $(b)$, Corollary \ref{coro: IndTime} for the limit $(c)$ and finally Corollary \ref{coro:CV-QSD} for the limit $(d)$.

To illustrate our main results, we develop in detail the study of two examples. Those examples are very simple when you are interested by the study of $(T_t)_{t\geq 0}$ (QSD, rate of convergence ...) but there are important problems (and even some open questions) on the particle system (invariant distribution, rate of convergence...).
The first example concerns a random walk on the complete graph with sites $\{1,\dots, K\}$ and constant killing rate. Namely
$$
\quad \forall i,j\in \{1,\dots, K\}, i \neq j, \quad Q_{i,j} = \dfrac{1}{K},  \ p_0(i)=p >0.
$$
The quasi-stationary distribution is trivially the uniform distribution. However, the associated particle system does not behave as independent identically distributed copies of uniformly distributed particles and its behavior is less trivial. One interesting point of the complete graph approach is that it permits to reduce the difficulties of the Fleming-Viot to the interaction. Due to its simple geometry, several explicit formulas are obtained such as the invariant distribution, the correlations and the spectral gap. It seems to be new in the context of Fleming-Viot particle systems. 

The second example is the case where $F^*$ contains only two elements. The study of $(T_t)_{t\geq 0}$ is classically reduced to the study of a $2\times 2$ matrix. The study of the particle system, for its part, is reduced to the study of a birth-death process with quadratic rates. We are not able to find, even in the literature, a closed formula for its spectral gap. However, we give a lower bound not depending on the number of particles. The proofs are based on our main general theorem (coupling type argument) and a generalisation of \cite{M99} (Hardy's inequalities type argument). For this example, the only trivial limit to quantify is the limit $(d)$. The analysis of these two examples shows the subtlety of Fleming-Viot processes.

\subsection*{Long time behavior}
\label{sect:main-temps}
To bound the limit $(a)$, we introduce the parameter $\lambda$ defined by
$$
\lambda = \inf_{i,i'\in F^{*} }\left( Q_{i,i'} + Q_{i',i} + \sum_{j \neq i,i'} Q_{i,j} \wedge Q_{i',j} \right).
$$
This parameter controls the ergodicity of a Markov chain with transition rate $Q$ without killing. Note that $\lambda$ is slightly larger than the ergodic coefficient $\alpha$ defined in \cite{FM07} by:
$$
\alpha= \sum_{j \in F^{*}} \inf_{i \neq j} Q_{i,j}.
$$
In particular, if there exists $j\in F^{*}$ and $c>0$ such that for every $i\neq j$, $Q_{i,j} > c$ then $\lambda \geq c$. Before expressing our results, let us describe the different distances that we use. We endow $E$ with the distance $d_{1}$ defined, for all $\eta$, $\eta'\in E$, by
$$
d_{1}(\eta,\eta') = \frac{1}{2} \sum_{j\in F} \vert \eta(j) - \eta'(j)\vert,
$$
which is the total variation distance between $m(\eta)$ and $m(\eta')$ up to a factor $N$: $d_{1}(\eta,\eta')= N d_{\textrm{TV}}(m(\eta),m(\eta'))$. Indeed, recall that, for every two probability measures $\mu$ and $\mu'$, the total variation distance is given by 
\begin{align*}
d_{\textrm{TV}}(\mu,\mu')
&= \frac{1}{2} \sup_{\Vert f \Vert_\infty \leq 1} \left( \int f d\mu - \int f d\mu' \right)  = \inf_{\substack{X \sim \mu\\ X' \sim \mu'}} \p \left( X \neq X'\right),
\end{align*}
where the infimum runs over all the couples of random variables with marginal laws $\mu$ and $\mu'$. Now, if $\mu$ and $\mu'$ are two probability measures on $E$, the $d_1-$Wasserstein distance between these two laws is defined by
$$
\mathcal{W}_{d_{1}}(\mu, \mu') = \inf_{\substack{\eta \sim \mu\\ \eta' \sim \mu'}} \E\left[ d_{1}(\eta,\eta')\right],
$$
where the infimum runs again over all the couples of random variables with marginal laws $\mu$ and $\mu'$. The law of a random variable $X$ is denoted by $\mathcal{L}(X)$ and, along the paper, we assume that
$$
\sup(p_0)< \infty.
$$
Our first main result is:
\begin{theo}[Wasserstein exponential ergodicity]
\label{th:tl-general}
If  $\rho=  \lambda -(\sup(p_0)-\inf(p_0))$ then for any processes $(\eta_{t})_{t>0}$ and $(\eta'_{t})_{t>0}$ generated by \eqref{eq:generator2}, and for any $t \geq 0$, we have
$$
\mathcal{W}_{d_1} (\mathcal{L}(\eta_t), \mathcal{L}(\eta'_t)) \leq e^{-\rho t} \mathcal{W}_{d_1} (\mathcal{L}(\eta_0), \mathcal{L}(\eta'_0)).
$$
In particular, if $\rho>0$ then there exists a unique invariant distribution $\nu_{N}$ satisfying for every $t \geq 0$,
$$
\mathcal{W}_{d_1} (\mathcal{L}(\eta_t), \nu_{N}) \leq e^{-\rho t} \mathcal{W}_{d_1} (\mathcal{L}(\eta_0), \nu_{N}).
$$
\end{theo}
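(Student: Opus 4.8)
The plan is to establish the contraction by building an explicit Markovian coupling $(\eta_t,\eta'_t)_{t\ge 0}$ of two copies of the process generated by \eqref{eq:generator2} and showing that its generator $\widehat{\mathcal{L}}$ contracts $d_1$ at rate $\rho$; that is, I would look for a coupling with
\begin{equation*}
\widehat{\mathcal{L}}\, d_1(\eta,\eta') \le -\rho\, d_1(\eta,\eta')\qquad\text{for all }\eta,\eta'\in E.
\end{equation*}
Granting this, Dynkin's formula and Gr\"onwall's lemma give $\E[d_1(\eta_t,\eta'_t)]\le e^{-\rho t}\,\E[d_1(\eta_0,\eta'_0)]$ for the coupled process. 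Bounding the left-hand side below by $\mathcal{W}_{d_1}(\mathcal{L}(\eta_t),\mathcal{L}(\eta'_t))$ (the coupled pair is admissible for the time-$t$ marginals) and choosing the initial coupling to be $\mathcal{W}_{d_1}$-optimal yields the announced bound. Since $E$ is finite, $(\mathcal{P}(E),\mathcal{W}_{d_1})$ is complete, so when $\rho>0$ the case of $\nu_N$ follows from the Banach fixed point theorem: the Markov semigroup of the particle system is, by the first part, a strict $\mathcal{W}_{d_1}$-contraction as soon as $e^{-\rho t}<1$, hence has a unique fixed point $\nu_N$, and the final estimate is the contraction applied with $\eta'_0\sim\nu_N$.

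To build the coupling I would work with labelled particles, writing $\eta_t,\eta'_t$ as the occupation measures of $(X^1_t,\dots,X^N_t)$ and $(Y^1_t,\dots,Y^N_t)$, and maintaining the invariant that each label $k$ is \emph{matched} ($X^k=Y^k$) or \emph{discrepant} ($X^k\ne Y^k$); starting from an optimal matching makes $D_0:=\#\{k:X^k_0\ne Y^k_0\}=d_1(\eta_0,\eta'_0)$, and in general $d_1(\eta_t,\eta'_t)\le D_t$, so it suffices to contract $D$. The free ($Q$) moves are coupled so that matched labels jump together at rate $Q_{i,j}$, staying matched; for a discrepant label at sites $(i,i')$ I use the maximal coalescing coupling, sending both particles to a common $j\ne i,i'$ at rate $Q_{i,j}\wedge Q_{i',j}$, moving $X^k$ to $i'$ at rate $Q_{i,i'}$ and $Y^k$ to $i$ at rate $Q_{i',i}$, with independent residual moves completing the marginals. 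Each such coalescence turns a discrepant label into a matched one, the total coalescence rate is at least $\lambda$ by the very definition of $\lambda$, and residual moves leave $D$ unchanged; hence the $Q$-part contributes at most $-\lambda D$. The killing is coupled by synchronising deaths up to rate $\min(p_0(X^k),p_0(Y^k))$ and, crucially, by resampling through a \emph{common} uniform index $\ell\ne k$ (each dying particle jumping onto $X^\ell$, resp.\ $Y^\ell$); a direct check shows this reproduces the correct marginal resampling law $\eta(j)/(N-1)$.

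The drift estimate for the killing is the heart of the proof and the main obstacle, precisely because resampling is an interaction and $p_0$ is not constant. I would classify the killing transitions by the status of $k$ and of the resampling index $\ell$. A matched label $k$ (at site $i$) that resamples onto a discrepant index raises $D$ by one, at rate $p_0(i)\,D/(N-1)$; summing gives at most $\frac{D}{N-1}\sum_{k\ \mathrm{matched}}p_0(X^k)\le \frac{D(N-D)}{N-1}\sup(p_0)$. A synchronised death of a discrepant label $k$ that resamples onto a matched index lowers $D$ by one, at rate $\min(p_0(X^k),p_0(Y^k))\,(N-D)/(N-1)$, contributing at most $-\frac{D(N-D)}{N-1}\inf(p_0)$; the mismatched (one-sided) deaths, occurring at rate $|p_0(X^k)-p_0(Y^k)|$, move a single particle and can only keep $D$ constant or decrease it, hence contribute non-positively. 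Collecting these terms,
\begin{equation*}
\widehat{\mathcal{L}}\,D \le -\lambda D + \frac{D(N-D)}{N-1}\bigl(\sup(p_0)-\inf(p_0)\bigr)\le -\rho\,D,
\end{equation*}
using $\frac{N-D}{N-1}\le 1$.

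Together with $d_1\le D$ and the Gr\"onwall/fixed-point argument above, this proves the theorem. The delicate points to verify carefully are that the labelled dynamics projects to the correct marginals (for both the $Q$-moves and the synchronised and mismatched deaths, where one checks the residual rates $(Q_{i,j}-Q_{i',j})^+$ and $|p_0(X^k)-p_0(Y^k)|$ are nonnegative), that the matched/discrepant invariant is preserved by every transition, and the bookkeeping showing that no transition increases $D$ except the matched-to-discrepant resampling already accounted for.
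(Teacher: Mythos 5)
Your proposal is correct and is essentially the paper's own proof: the paper likewise builds a Markovian coalescing coupling in which paired particles move, die and resample together whenever possible, discrepant pairs coalesce at rate at least $\lambda$ under the free dynamics and at rate at least $\inf(p_0)(N-d_1)/(N-1)$ under synchronized killing onto a common pair, while matched pairs are broken at rate at most $\sup(p_0)\,d_1/(N-1)$ each, yielding exactly your drift bound $\mathbb{L}d_1 \le -\rho\, d_1$ and concluding by Gronwall; the only difference is presentational, since you track a persistent labelled matching and bound $d_1\le D$, whereas the paper writes the coupling generator directly on $E\times E$ and re-randomizes the matching of unpaired particles inside the jump rates. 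One minor slip: $E$ need not be finite ($F^*$ is only assumed countable and discrete), but your Banach fixed-point argument still goes through because $(E,d_1)$ is complete and separable and $d_1$ is bounded, so $(\mathcal{P}(E),\mathcal{W}_{d_1})$ remains complete (the paper instead cites a standard reference for existence and uniqueness of the invariant law).
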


To our knowledge, it is the first theorem which establishes an exponential convergence for the Fleming-Viot particle system with an explicit rate. Note anyway that in \cite{V10}, it is shown that the particle system is exponentially ergodic, when the underlying dynamics follows a certain stochastic differential equation. Its proof is based on Foster-Lyapunov techniques \cite{MT93,HM11} and, contrary to us, the dependence on $N$ of the rates and bounds are unknown. So, this gives less informations.

 When the death rate $p_0$ is constant, our bound is optimal in terms of contraction. See for instance section \ref{sect:GC}, where the example of a random walk on the complete graph is developed. When the death rate is not constant, this bound is not optimal, for instance if the state space is finite, we can have $\rho<0$ even if the process can converge exponentially fast. Indeed, it can be an irreducible Markov process on a finite state space. Nevertheless, finding a general optimal bound is a difficult problem. See for instance Section \ref{sect:two}, where we study the case where $F^{*}$ contains only two elements. Even though in this case the study seems to be easy, we are not able to give a closed formula for the spectral gap (even if we give a lower bound in the general case). Also, note that the previous inequality is a contraction, this gives some information for small times and is more than a convergence result. Finally the previous convergence is stronger than a convergence in total variation distance as can be checked with Corollary \ref{coro:coalescent}.

\subsection*{Propagation of chaos}
\label{sect:main-chaos}
In general, two tagged particles in a large population of interacting ones behave in an almost independent way under some assumptions; see \cite{S89}. In our case, two particles are almost independent when $N$ is large and this gives the convergence of $(m(\eta_t))_{t\geq 0}$ to $(T_t)_{t\geq 0}$. 

To prove this result, we will assume that: 
\begin{hypo*}[boundedness assumption] \ 
\begin{enumerate}
\item[$(A)$]$\mathbf{Q_1} = \sup_{i\in F^*} \sum_{j\in F^* ,j \neq i} Q_{i,j} < + \infty \ \text{ and } \ \mathbf{p}= \sup_{i\in F^*} p_0(i) < + \infty$.
\end{enumerate}
\end{hypo*}
Under this assumption, the particle system converges to the conditioned semi-group. Moreover, when the state space is finite, this convergence is quantified in terms of total variation distance. To express this convergence, we set
$$
\E_{\eta}[f(X)] = \E[f(X) \mid \eta_0 = \eta],
$$
for every bounded function $f$, every $\eta \in E$ and every random variable $X$.

\begin{theo}[Convergence to the conditioned process]
\label{th:chaos}
Under Assumption $(A)$ and for $t\geq 0$, there exists $B,C>0$ such that, for all $\eta \in E$, and any probability measure $\mu$,  we have 
$$
\sup_{\Vert \varphi \Vert_{\infty} \leq 1 } \E_{\eta} \left[ \left|  m(\eta_t)(\varphi) - \mu T_t \varphi \right| \right] \leq C e^{Bt} \left( \dfrac{1}{\sqrt{N}} + d_{\textrm{TV}} (m(\eta), \mu) \right).
$$
All constants are explicit and detailed in the proof (In particular, they do not depend on $N$ and $t$).
\end{theo}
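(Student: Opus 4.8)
The plan is to compare the random functional $s \mapsto m(\eta_s)(\varphi)$ with the deterministic nonlinear flow $s\mapsto (m(\eta)T_s)(\varphi)$ started from the same initial empirical measure, and then to absorb the change of initial condition from $m(\eta)$ to $\mu$. Two elementary identities drive everything. Applying the generator \eqref{eq:generator2} to the linear functional $\eta\mapsto m(\eta)(\varphi)=\frac1N\sum_k\eta(k)\varphi(k)$, using $m(T_{i\to j}\eta)(\varphi)-m(\eta)(\varphi)=\frac1N(\varphi(j)-\varphi(i))$ and the convention $\varphi(0)=0$, gives
\[
\mathcal L\big(m(\cdot)(\varphi)\big)(\eta)=m(\eta)(G\varphi)-\tfrac{1}{N-1}m(\eta)(p_0\varphi)+\tfrac{N}{N-1}\,m(\eta)(\varphi)\,m(\eta)(p_0).
\]
On the other hand, differentiating $\nu T_s\varphi=(\nu P_s\varphi)/(\nu P_s\un_{\{0\}^c})$ and using $G\un_{\{0\}^c}=-p_0$ on $F^*$ yields the closed nonlinear equation
\[
\tfrac{d}{ds}(\nu T_s\varphi)=(\nu T_s)(G\varphi)+(\nu T_s)(\varphi)\,(\nu T_s)(p_0).
\]
Comparing the two, the expected empirical measure obeys the flow equation up to the explicit $O(1/N)$ terms $\tfrac1{N-1}m(\eta)(p_0\varphi)$ and $(\tfrac{N}{N-1}-1)m(\eta)(\varphi)m(\eta)(p_0)$, plus the covariance defect $\E_\eta[m(\eta_s)(\varphi)m(\eta_s)(p_0)]-\E_\eta[m(\eta_s)(\varphi)]\,\E_\eta[m(\eta_s)(p_0)]$.

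I then split, for $\|\varphi\|_\infty\le1$,
\[
\E_\eta\big[|m(\eta_t)(\varphi)-\mu T_t\varphi|\big]\le \E_\eta\big[|m(\eta_t)(\varphi)-\E_\eta m(\eta_t)(\varphi)|\big]+|\E_\eta m(\eta_t)(\varphi)-m(\eta)T_t\varphi|+|m(\eta)T_t\varphi-\mu T_t\varphi|,
\]
and treat the three terms separately. The last term is a stability estimate for the nonlinear semigroup: writing the flow equation for $\nu_s=m(\eta)T_s$ and $\nu_s'=\mu T_s$ and setting $D_s=\sup_{\|\psi\|_\infty\le1}|(\nu_s-\nu_s')(\psi)|$, Assumption $(A)$ makes the right-hand side Lipschitz (indeed $\|G\varphi\|_\infty\le(\mathbf p+2\mathbf{Q_1})\|\varphi\|_\infty$ and $p_0\le\mathbf p$), so Gronwall gives $D_s\le e^{Bs}D_0$ with $D_0=2\,d_{\textrm{TV}}(m(\eta),\mu)$. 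The middle (bias) term is handled the same way: the difference $a_s(\varphi)-\nu_s(\varphi)$ with $a_s=\E_\eta[m(\eta_s)]$ solves the flow equation perturbed by the $O(1/N)$ source identified above, and the same Lipschitz/Gronwall argument yields a bound $Ce^{Bt}/N$. The first (fluctuation) term is dominated by Cauchy--Schwarz by $\sqrt{\var_\eta(m(\eta_t)(\varphi))}$. Collecting the three bounds, taking the supremum over $\|\varphi\|_\infty\le1$, and absorbing the $1/N$ contribution into the $1/\sqrt N$ one gives the claimed estimate.

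Everything above is routine Gronwall analysis once one has the decisive input, namely a variance/correlation bound of the form $\var_\eta(m(\eta_t)(\varphi))\le Ce^{Bt}/N$ (equivalently, that the two-particle correlations are $O(1/N)$), which feeds both the fluctuation term and the covariance defect in the bias term. This is where Assumption $(A)$ is essential and where the real work lies, and it is exactly the content of the correlation estimates in Theorems \ref{thm:cor} and \ref{th:cor-gen}, which I would invoke here. I expect this to be the main obstacle: differentiating $\cov_\eta(m(\eta_s)(\varphi),m(\eta_s)(\psi))$ produces a diagonal ``carr\'e du champ'' term of the correct order $1/N$ (each jump moves $m(\eta)(\varphi)$ by $O(1/N)$ and jumps occur at rate $O(N)$, whence $\E_\eta[\Gamma]\le 4(\mathbf{Q_1}+2\mathbf p)\|\varphi\|_\infty^2/N$), but the quadratic selection term $\tfrac{N}{N-1}m(\eta)(\varphi)m(\eta)(p_0)$ in $\mathcal L$ generates three-particle correlations, so the differential inequality for the second-order correlations does not close by itself. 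The delicate point is to bound these higher correlations by the lower ones using only the boundedness in $(A)$, so that a coupled Gronwall system for the first- and second-order correlations closes and produces the uniform $O(1/N)$ control with an explicit, $N$-independent rate $B$.
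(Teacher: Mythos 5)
Your proposal follows essentially the same route as the paper's proof: the same bias--variance decomposition (the paper simply folds your third, initial-condition term into the bias Gronwall, since $\epsilon(0)=2d_{\textrm{TV}}(m(\eta),\mu)$ there), the same comparison between the generator identity for $\eta\mapsto m(\eta)(\varphi)$ and the nonlinear Kolmogorov equation for $\mu T_t$, Cauchy--Schwarz for the fluctuation term, and the correlation estimates of Theorems \ref{thm:cor} and \ref{th:cor-gen} invoked as the key input, exactly as the paper does.

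One clarification regarding your final paragraph: the obstacle you anticipate --- that the differential equations for two-particle correlations do not close because the quadratic selection term generates three-particle correlations --- does not arise, because the paper does not prove Theorem \ref{th:cor-gen} by a moment hierarchy at all. Instead it writes $\var_{\eta}(g(\eta_t))=\int_0^t S_s\Gamma S_{t-s}g(\eta)\,ds$ and controls $\Gamma S_{t-s}g$ directly: by the Wasserstein contraction of Theorem \ref{th:tl-general}, the function $S_{t-s}g$ is $e^{-\rho(t-s)}$-Lipschitz for $d_1$ whenever $g$ is $1$-Lipschitz, so each increment satisfies $|S_{t-s}g(T_{i \rightarrow j}\eta)-S_{t-s}g(\eta)|\le e^{-\rho(t-s)}$ and the \emph{carr\'e du champ} is bounded by $e^{-2\rho(t-s)}\left(N\mathbf{Q_1}+\mathbf{p}\frac{N^2}{N-1}\right)$. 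The coupling thus replaces any closure argument, and it works for any sign of $\rho$ (with the convention $(1-e^{-2\rho t})\rho^{-1}=2t$ when $\rho=0$), which is precisely what makes the constants $B,C$ in Theorem \ref{th:chaos} explicit and independent of $N$ and $t$.
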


The proof is based on an estimation of correlations and on a Gronwall-type argument. More precisely our correlation estimate is given by:

\begin{theo}[Covariance estimates]
\label{thm:cor} 
Let $\rho$ be defined in Theorem \ref{th:tl-general}. Under Assumption $(A)$, we have for all $k,l \in F^{*}$, $\eta \in E$ and $t\geq 0$ 
$$
\left| \E_{\eta}\left[\frac{\eta_{t}(k)}{N}\frac{\eta_{t}(l)}{N} \right] - \E_{\eta}\left[\frac{\eta_{t}(k)}{N}\right] \E_{\eta}\left[\frac{\eta_{t}(l)}{N}\right] \right | 
\leq \frac{2(\mathbf{Q_1} +  \mathbf{p})}{N-1} \frac{1 - e^{- 2\rho t}}{\rho},
$$
with the convention $(1 - e^{- 2\rho t}) \rho^{-1} =2t$ when $\rho=0$.
\end{theo}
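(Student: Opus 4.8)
The plan is to use the carré du champ together with a Duhamel (variation-of-constants) identity, and to feed the Wasserstein contraction of Theorem \ref{th:tl-general} into it. Write $\mathcal{P}_t\Phi(\eta)=\E_\eta[\Phi(\eta_t)]$ for the semigroup of the particle system and, for bounded $\Phi,\Psi:E\to\R$, introduce
$$
\Gamma(\Phi,\Psi)=\mathcal{L}(\Phi\Psi)-\Phi\,\mathcal{L}\Psi-\Psi\,\mathcal{L}\Phi .
$$
Fixing $t$ and differentiating $s\mapsto \mathcal{P}_s\big((\mathcal{P}_{t-s}\Phi)(\mathcal{P}_{t-s}\Psi)\big)$, using $\partial_s\mathcal{P}_{t-s}\Phi=-\mathcal{L}\,\mathcal{P}_{t-s}\Phi$, the two boundary values are $\mathcal{P}_t(\Phi\Psi)(\eta)=\E_\eta[\Phi(\eta_t)\Psi(\eta_t)]$ at $s=t$ and $(\mathcal{P}_t\Phi)(\mathcal{P}_t\Psi)(\eta)=\E_\eta[\Phi(\eta_t)]\,\E_\eta[\Psi(\eta_t)]$ at $s=0$, so that
$$
\E_\eta[\Phi(\eta_t)\Psi(\eta_t)]-\E_\eta[\Phi(\eta_t)]\,\E_\eta[\Psi(\eta_t)]
=\int_0^t \E_\eta\big[\Gamma(\mathcal{P}_{t-s}\Phi,\mathcal{P}_{t-s}\Psi)(\eta_s)\big]\,ds .
$$
I would apply this with $\Phi(\eta)=\eta(k)/N$ and $\Psi(\eta)=\eta(l)/N$. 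Under Assumption $(A)$ the total jump rate of the particle system is bounded uniformly in $\eta$, so the chain is non-explosive, all functions in sight are bounded, and this differentiation is legitimate.

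The second ingredient is the explicit form of $\Gamma$ for a pure-jump generator: since the transition $\eta\to T_{i\rightarrow j}\eta$ occurs at rate $\eta(i)\big(Q_{i,j}+p_0(i)\tfrac{\eta(j)}{N-1}\big)$,
$$
\Gamma(\Phi,\Psi)(\eta)=\sum_{i,j\in F^*}\eta(i)\Big(Q_{i,j}+p_0(i)\tfrac{\eta(j)}{N-1}\Big)\big(\Phi(T_{i\rightarrow j}\eta)-\Phi(\eta)\big)\big(\Psi(T_{i\rightarrow j}\eta)-\Psi(\eta)\big),
$$
the terms $i=j$ vanishing. The crucial structural point is that each elementary move displaces a single particle, so $d_{1}(\eta,T_{i\rightarrow j}\eta)=1$; hence I only need to control the discrete gradients of $\mathcal{P}_{t-s}\Phi$ and $\mathcal{P}_{t-s}\Psi$ across a unit $d_{1}$-step.

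This is where Theorem \ref{th:tl-general} enters. The coordinate function $\eta\mapsto\eta(k)/N$ is $(2/N)$-Lipschitz for $d_{1}$, since $|\eta(k)-\eta'(k)|\leq 2d_{1}(\eta,\eta')$, and the contraction upgrades this to a decaying Lipschitz bound for the semigroup: for $d_{1}$-Lipschitz $\Phi$ with constant $L$, coupling the two systems started at $\eta,\eta'$ gives $\mathcal{P}_r\Phi(\eta)-\mathcal{P}_r\Phi(\eta')=\E[\Phi(\eta_r)-\Phi(\eta'_r)]$, and optimizing over couplings,
$$
|\mathcal{P}_r\Phi(\eta)-\mathcal{P}_r\Phi(\eta')|\leq L\,\mathcal{W}_{d_1}(\mathcal{L}(\eta_r),\mathcal{L}(\eta'_r))\leq L\,e^{-\rho r}\,d_{1}(\eta,\eta').
$$
With $r=t-s$, $L=2/N$ and $d_{1}(\eta,T_{i\rightarrow j}\eta)=1$, this yields $|\mathcal{P}_{t-s}\Phi(T_{i\rightarrow j}\eta)-\mathcal{P}_{t-s}\Phi(\eta)|\leq (2/N)e^{-\rho(t-s)}$, and likewise for $\Psi$, so each product of gradients is at most $(4/N^2)e^{-2\rho(t-s)}$.

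It then remains to sum the rates: using $\sum_{j}Q_{i,j}\leq\mathbf{Q_1}$, $p_0(i)\leq\mathbf{p}$ and $\sum_i\eta(i)=\sum_j\eta(j)=N$, one gets $\sum_{i,j}\eta(i)\big(Q_{i,j}+p_0(i)\tfrac{\eta(j)}{N-1}\big)\leq N\mathbf{Q_1}+\tfrac{N^2\mathbf{p}}{N-1}\leq \tfrac{N^2(\mathbf{Q_1}+\mathbf{p})}{N-1}$, whence $|\Gamma(\mathcal{P}_{t-s}\Phi,\mathcal{P}_{t-s}\Psi)(\eta)|\leq \tfrac{4(\mathbf{Q_1}+\mathbf{p})}{N-1}e^{-2\rho(t-s)}$ for every $\eta$. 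Inserting this into the integral representation and using $\int_0^t e^{-2\rho(t-s)}\,ds=(1-e^{-2\rho t})/(2\rho)$ produces exactly $\tfrac{2(\mathbf{Q_1}+\mathbf{p})}{N-1}\tfrac{1-e^{-2\rho t}}{\rho}$, the $\rho=0$ convention following from $\lim_{\rho\to0}(1-e^{-2\rho t})/\rho=2t$. The only genuinely delicate step is justifying the Duhamel differentiation on the possibly infinite state space $E$ and the identity $\partial_s\mathcal{P}_{t-s}\Phi=-\mathcal{L}\,\mathcal{P}_{t-s}\Phi$; this is precisely where the boundedness of the rates in Assumption $(A)$ is needed, ensuring finiteness of all quantities and the absolute convergence of the sums defining $\mathcal{L}$ and $\Gamma$.
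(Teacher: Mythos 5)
Your proof is correct and follows essentially the same route as the paper: an interpolation (Duhamel) identity expressing the covariance through the \textit{carr\'e du champ}, with the discrete gradients of $\mathcal{P}_{t-s}\Phi$ controlled by the Wasserstein contraction of Theorem \ref{th:tl-general} applied to Lipschitz functions, and the total jump rate bounded via Assumption $(A)$, yielding the stated constant after integration. The only cosmetic difference is that you polarize the \textit{carr\'e du champ} and bound the covariance directly, whereas the paper (Theorem \ref{th:cor-gen}) bounds the variance of $1$-Lipschitz functionals and then invokes the Cauchy-Schwarz inequality; both variants produce the same bound.
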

This theorem gives a decay of the variances and the covariances of the marginals of $\eta$. Actually, it does not give any information on the correlation but this slight abuse of language is used to be consistent with other previous works \cite{AFG,FM07}.

The previous theorem is a consequence of Theorem \ref{th:cor-gen} which gives some bounds on the correlations of more general functional of $\eta$. The proof of this result comes from a commutation relation between the \textit{carr\'{e} du champs} operator and the semigroup of $\eta$. This commutation-type relation gives a decay of the variance and thus, by the Cauchy-Schwarz inequality, of the correlations. The previous bound is uniform in time when $\rho>0$ and it generalizes several previous work \cite{AFG,FM07}. Indeed, as our proof differs completely to \cite{AFG,FM07} (proof based on a comparison with the voter model), we are able to use more complex functional of $\eta$ and our bounds are uniform in time. In particular, taking the limit $t\rightarrow + \infty$ when $\rho>0$, we have the decay of the correlations under the invariant distribution of $(\eta_t)_{t\geq 0}$. This seems to be new (in discrete or continuous state space).

Theorem \ref{th:chaos} is a generalization of \cite[Theorem 1.3]{AFG}, \cite[Theorem 1.2]{FM07} and of \cite[Theorem 2.2]{GJ12}. Our assumptions are weaker and our convergence estimate is in a stronger form. We can also cite \cite[Theorem 1.1]{MM00} and \cite[Theorem 1]{V11} which give the same kind of bound with a less explicit constant. However, these two theorems cover a more general setting. This theorem permits to extend the properties of the particle system to the conditioned process; see the next subsection. The proof of Theorem \ref{th:chaos} differs from all these theorems; it seems simpler and is only based on a Gronwall argument and the correlation estimates.

Finally, we can improve the previous bound in the special case of the complete graph random walk but, in general, we do not know how improve it even when card$(F^*)=2$; see Sections \ref{sect:GC} and \ref{sect:two}. As all constants are explicit, the previous theorem allows us to consider parameters depending on $N$ and to understand how the particle system evolves when $Q$ varies with the size of the population; see for instance Remark \ref{rq:dependenceGC}. 

\subsection*{Two main consequences}
\label{sect:main-csq}
We summarize two important consequences of our main theorems. Firstly, as $\rho$, defined in Theorem \ref{th:tl-general}, does not depend on $N$, we can take the limit as $N\rightarrow + \infty$ in Theorem \ref{th:tl-general}. This gives an \og easy-to-verify \fg \ criterion to prove the existence, uniqueness of a quasi-stationary distribution and the exponential convergence of the conditioned process to it. 

\begin{coro}[Convergence to the QSD]
\label{coro:CV-QSD}
Suppose that $\rho$ is positive and that Assumption $(A)$ holds. For any probability measure $\mu,\nu$, we have
\begin{equation}
\label{eq:contractionT}
\forall t\geq 0, \ d_{\text{\textrm{TV}}}\left(\mu T_t, \nu T_t \right) \leq e^{-\rho t} d_{\text{\textrm{TV}}}\left(\mu, \nu\right).
\end{equation}
In particular, there exists a unique quasi-stationary distribution $\nu_{\textrm{qs}}$ for $(T_t)_{t\geq 0}$ and for any probability measure $\mu$, we have
$$
\forall t\geq 0, \ d_{\textrm{TV}}\left(\mu T_t, \nu_{\textrm{qs}} \right) \leq e^{-\rho t}.
$$
\end{coro}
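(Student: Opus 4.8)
The plan is to obtain the total-variation contraction \eqref{eq:contractionT} as the $N\to\infty$ limit of the Wasserstein contraction for the particle system (Theorem \ref{th:tl-general}), using the propagation of chaos (Theorem \ref{th:chaos}) as the bridge between the empirical measure of the particle system and the conditioned semigroup; the existence and uniqueness of $\nu_{\textrm{qs}}$ and the final estimate will then follow from a standard fixed-point argument. Throughout I will use that $\sup(p_0)=\mathbf{p}<\infty$ forces $\mu P_t\un_{\{0\}^c}>0$, so that $\mu\mapsto\mu T_t$ is a well-defined map from probability measures on $F^*$ into itself, together with the identity $d_1(\eta,\eta')=N\,d_{\textrm{TV}}(m(\eta),m(\eta'))$ and the elementary bound $|m(\eta)(\varphi)-m(\eta')(\varphi)|\le \tfrac{2}{N}\Vert\varphi\Vert_\infty\, d_1(\eta,\eta')$.

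To prove \eqref{eq:contractionT}, I would fix $t\ge0$ and $\varphi$ with $\Vert\varphi\Vert_\infty\le1$, and for each $N$ choose configurations $\eta_0,\eta'_0\in E^{(N)}$ whose empirical measures approximate $\mu$ and $\nu$, i.e. $d_{\textrm{TV}}(m(\eta_0),\mu)\to0$ and $d_{\textrm{TV}}(m(\eta'_0),\nu)\to0$ as $N\to\infty$ (possible since finitely supported rational measures are dense for $d_{\textrm{TV}}$). Then I would split
$$
\mu T_t\varphi-\nu T_t\varphi=\big(\mu T_t\varphi-\E_{\eta_0}[m(\eta_t)(\varphi)]\big)+\big(\E_{\eta_0}[m(\eta_t)(\varphi)]-\E_{\eta'_0}[m(\eta'_t)(\varphi)]\big)+\big(\E_{\eta'_0}[m(\eta'_t)(\varphi)]-\nu T_t\varphi\big).
$$
The first and third brackets are bounded, via Theorem \ref{th:chaos}, by $Ce^{Bt}(N^{-1/2}+d_{\textrm{TV}}(m(\eta_0),\mu))$ and $Ce^{Bt}(N^{-1/2}+d_{\textrm{TV}}(m(\eta'_0),\nu))$, which both vanish. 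For the middle bracket I would introduce a near-optimal $\mathcal{W}_{d_1}$-coupling of $\mathcal{L}(\eta_t)$ and $\mathcal{L}(\eta'_t)$ and combine the elementary bound above with Theorem \ref{th:tl-general} and $\mathcal{W}_{d_1}(\mathcal{L}(\eta_0),\mathcal{L}(\eta'_0))=d_1(\eta_0,\eta'_0)=N\,d_{\textrm{TV}}(m(\eta_0),m(\eta'_0))$ to reach $2e^{-\rho t}d_{\textrm{TV}}(m(\eta_0),m(\eta'_0))$. Letting $N\to\infty$ leaves $\mu T_t\varphi-\nu T_t\varphi\le 2e^{-\rho t}d_{\textrm{TV}}(\mu,\nu)$, and taking the supremum over $\varphi$ together with $d_{\textrm{TV}}=\tfrac12\sup_{\Vert\varphi\Vert_\infty\le1}(\cdot)$ yields \eqref{eq:contractionT}.

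With \eqref{eq:contractionT} established, I would conclude using the semigroup identity $(\mu T_s)T_t=\mu T_{s+t}$, which is a direct computation from the definition of $T_t$ once one observes that $0$ is absorbing, so $P_uf$ and $P_u\un_{\{0\}^c}$ vanish at $0$ and the normalising constants telescope. Fixing any $t_0>0$, the map $\mu\mapsto\mu T_{t_0}$ is then a strict contraction (ratio $e^{-\rho t_0}<1$) of the complete metric space of probability measures on $F^*$ equipped with $d_{\textrm{TV}}$, so Banach's theorem provides a unique fixed point $\nu_{\textrm{qs}}$. The semigroup relation shows $\nu_{\textrm{qs}}T_s$ is again fixed by $T_{t_0}$, hence equals $\nu_{\textrm{qs}}$ for all $s\ge0$, so $\nu_{\textrm{qs}}$ is a quasi-stationary distribution; conversely any QSD is fixed by $T_{t_0}$ and so coincides with $\nu_{\textrm{qs}}$, giving uniqueness. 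The final bound is immediate: applying \eqref{eq:contractionT} with $\nu=\nu_{\textrm{qs}}$ and $d_{\textrm{TV}}(\mu,\nu_{\textrm{qs}})\le1$ gives $d_{\textrm{TV}}(\mu T_t,\nu_{\textrm{qs}})\le e^{-\rho t}$.

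The main obstacle is the middle bracket. One must transfer the particle-level Wasserstein contraction into a statement about the nonlinear conditioned semigroup, being careful that the coupling is of the time-$t$ laws $\mathcal{L}(\eta_t),\mathcal{L}(\eta'_t)$ rather than of the deterministic numbers $\E_{\eta_0}[m(\eta_t)(\varphi)]$, that the factors $\tfrac{2}{N}$ and $N$ cancel, and that the remaining factor $2$ is absorbed by the $\tfrac12$ in the definition of $d_{\textrm{TV}}$. The other ingredients — density of empirical measures, the vanishing of the chaos error (crucially, because $\rho$ in Theorem \ref{th:tl-general} is independent of $N$), and the fixed-point argument — are routine.
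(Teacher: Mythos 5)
Your proposal is correct, and for the contraction \eqref{eq:contractionT} it follows essentially the paper's route: approximate $\mu$ and $\nu$ by empirical measures of configurations in $E^{(N)}$, control the ``outer'' error terms by Theorem \ref{th:chaos} (crucially using that $B,C$ do not depend on $N$), control the ``middle'' term by the Wasserstein contraction of Theorem \ref{th:tl-general} together with $d_1(\eta_0,\eta'_0)=N\,d_{\textrm{TV}}(m(\eta_0),m(\eta'_0))$, and let $N\to\infty$. The only difference there is cosmetic: you work with a fixed test function $\varphi$, $\Vert\varphi\Vert_\infty\le1$, and take the supremum at the end, whereas the paper works directly with the total variation distance and therefore needs an exhaustion of $F^*$ by finite sets $F^*_n$ and a monotone passage $n\to\infty$; your duality formulation sidesteps that step cleanly, and your bookkeeping of the factors $2/N$, $N$ and $\tfrac12$ is right. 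Where you genuinely diverge is the second assertion: the paper shows that $(\mu T_t)_{t\ge 0}$ is Cauchy in total variation via $d_{\textrm{TV}}(\mu T_t,(\mu T_{s-t})T_t)\le e^{-\rho t}$ and then cites external references for the fact that the limit is a QSD, while you fix $t_0>0$ and apply Banach's fixed point theorem to the strict contraction $\mu\mapsto\mu T_{t_0}$ on the complete space of probability measures on $F^*$ under $d_{\textrm{TV}}$, then use the flow property $(\mu T_s)T_t=\mu T_{s+t}$ (which you correctly justify from absorption at $0$ and the telescoping of normalising constants) to upgrade the fixed point to a QSD and to get uniqueness. Both arguments rest on the same two pillars (the contraction and the semigroup identity), but yours is self-contained where the paper delegates the final identification step to \cite{MV12}; the paper's Cauchy-sequence formulation, on the other hand, exhibits $\nu_{\textrm{qs}}$ directly as $\lim_{t\to\infty}\mu T_t$, which is slightly more informative about how the QSD arises.
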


This corollary is closely related to several previous work \cite{DS67}, \cite[Theorem 1.1]{MM02}, \cite[Theorem 3]{MMV12} and \cite[Theorem 1.1]{FM07}. When $F$ is finite, the oldest result dates from 1967 \cite{DS67} where Darroch and Seneta give a similar bound without additional assumption. Nevertheless, the constants are less explicit because the proof is based on Perron-Frobenius Theorem. The other results are more recent. Under a slightly weaker condition, we recover \cite[Theorem 1.1]{FM07} in a stronger convergence and with an estimation of the rate of convergence. As in \cite[Theorem 1.1]{MM02}, a mixing condition for $Q$ and a regularity one for $p_0$ are assumed to obtain an exponential convergence to a QSD; namely, we assume that $\lambda$ is large enough and $(\sup(p_0)-\inf(p_0))$ is small enough. 
In \cite[Theorem 1.1]{MM02} they only need that $\sup(p_0)<+ \infty$ but, their mixing condition is stronger than ours. Finally \cite[Theorem 3]{MMV12} gives a weaker condition to obtain an exponential convergence with (generally) a lower and less explicit rate of convergence when our result applies. Also note that Assumption $(A)$ is not necessary; see Remark \ref{rq:cv-qsd}.

Our second corollary gives a uniform bound for the limits $(b)$ and $(c)$, namely the convergences as $N$ tends to infinity:
\begin{coro}[Uniform bounds]
\label{coro: IndTime}
If $\rho>0$, then under the assumptions of Theorem \ref{th:chaos}, there exist $K_0, \gamma >0$ such that , for every $\eta \in E$,
$$
\sup_{t\geq 0} \sup_{\Vert \varphi \Vert_{\infty} \leq 1} \E_{\eta} \left[ \vert m(\eta_t)(\varphi) - m(\eta) T_t\varphi\vert \right] \leq \dfrac {K_0} {N^{\gamma}}.
$$
All constants are explicit and given in \eqref{cor: cst}.
\end{coro}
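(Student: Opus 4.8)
The plan is to upgrade the time-dependent estimate of Theorem \ref{th:chaos} to a time-uniform one by combining it with the exponential contraction of the conditioned semigroup from Corollary \ref{coro:CV-QSD}, through a restarting (regeneration) argument. Set
$$
u(t) = \sup_{\Vert \varphi \Vert_{\infty} \leq 1}\E_{\eta}\big[\,\vert m(\eta_t)(\varphi) - m(\eta)T_t\varphi\vert\,\big].
$$
Choosing $\mu = m(\eta)$ in Theorem \ref{th:chaos}, so that $d_{\textrm{TV}}(m(\eta),\mu)=0$, already gives the short-time control $u(t)\le C e^{Bt}/\sqrt N$. This is useless as $t\to\infty$ on its own, and the whole point is to feed it into a contraction so that the exponential factor is absorbed.

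Fix a block length $s>0$ and, for $t\ge s$, I would split
$$
m(\eta_t)(\varphi)-m(\eta)T_t\varphi = \big(m(\eta_t)(\varphi)-m(\eta_{t-s})T_s\varphi\big) + \big(m(\eta_{t-s})T_s\varphi - m(\eta)T_t\varphi\big) =: (I)+(II).
$$
For $(I)$, condition on $\mathcal{F}_{t-s}$ and use the Markov property: given $\eta_{t-s}$, the process is a Fleming--Viot system restarted from $\eta_{t-s}$, so Theorem \ref{th:chaos} applied with initial configuration $\eta_{t-s}$ and measure $\mu=m(\eta_{t-s})$ gives $\E[\,\vert (I)\vert \mid \mathcal{F}_{t-s}]\le C e^{Bs}/\sqrt N$, whence $\E_\eta\vert (I)\vert\le C e^{Bs}/\sqrt N$ after taking expectations.

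For $(II)$, the semigroup property $m(\eta)T_t = (m(\eta)T_{t-s})T_s$ lets me write $(II)=\big(m(\eta_{t-s})T_s-(m(\eta)T_{t-s})T_s\big)(\varphi)$. Bounding the pairing of $\varphi$ against this signed measure by its total mass and invoking the contraction \eqref{eq:contractionT} (applied pathwise in $\eta_{t-s}$) gives $\vert (II)\vert\le e^{-\rho s}\sum_{k\in F^*}\vert m(\eta_{t-s})(k)-(m(\eta)T_{t-s})(k)\vert$. Taking expectations and bounding each summand by $u(t-s)$ (testing against $\pm\un_{\{k\}}$, which has sup-norm $1$) yields $\E_\eta\vert (II)\vert\le \mathrm{card}(F^*)\,e^{-\rho s}\,u(t-s)$. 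Since the resulting bounds are independent of $\varphi$, combining $(I)$ and $(II)$ and taking the supremum over $\varphi$ gives the recursion, valid for all $t\ge s$,
$$
u(t)\le \frac{C e^{Bs}}{\sqrt N} + q\,u(t-s), \qquad q:=\mathrm{card}(F^*)\,e^{-\rho s}.
$$

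Because $\rho>0$, I can choose $s$ large enough to make $q<1$ (for instance $s=\rho^{-1}\log(2\,\mathrm{card}(F^*))$, giving $q=\tfrac12$), turning the recursion into a genuine geometric contraction. Iterating over the blocks $t=ms+r$ with $r\in[0,s)$, and controlling the base interval by $u(r)\le C e^{Bs}/\sqrt N$, the geometric series sums to a bound independent of $t$, namely $\sup_{t\ge0}u(t)\le \frac{C e^{Bs}}{\sqrt N}\,\frac{2-q}{1-q}$, which is exactly of the announced form $K_0 N^{-\gamma}$ with $\gamma=\tfrac12$ and an explicit constant $K_0$. The main obstacle is the term $(II)$: the contraction \eqref{eq:contractionT} is phrased in total variation, so extracting it forces passing through the full signed-measure norm of $m(\eta_{t-s})-m(\eta)T_{t-s}$, which is what produces the factor $\mathrm{card}(F^*)$ and implicitly uses that $F^*$ is finite; everything then hinges on having $\rho>0$ available to absorb this factor by enlarging $s$.
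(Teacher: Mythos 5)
Your skeleton — split at an intermediate time, control the last stretch by Theorem \ref{th:chaos} restarted from $\eta_{t-s}$ with $\mu=m(\eta_{t-s})$ (so the total-variation term vanishes), and control the earlier stretch through the contraction \eqref{eq:contractionT} — is exactly the paper's decomposition. The genuine gap is the step where you convert the total-variation norm back into your recursion variable, namely
$$
\E_\eta\Bigl[\,\sum_{k\in F^*}\bigl| m(\eta_{t-s})(k)-(m(\eta)T_{t-s})(k)\bigr|\Bigr]\;\leq\; \mathrm{card}(F^*)\,u(t-s).
$$
The corollary is stated under the assumptions of Theorem \ref{th:chaos} alone, i.e.\ Assumption $(A)$ and $\rho>0$, on a state space $F^*$ that is only assumed discrete and \emph{countable}; nothing bounds $\mathrm{card}(F^*)$, and your final constant $K_0=3C\,(2\,\mathrm{card}(F^*))^{B/\rho}$ is infinite in that setting. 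The obstruction is structural: $u(t)$ carries the supremum over $\varphi$ \emph{outside} the expectation, while total variation needs it \emph{inside}, and interchanging them costs exactly the cardinality factor. Recursing instead on $v(t)=\E_\eta[2\,d_{\textrm{TV}}(m(\eta_t),m(\eta)T_t)]$ does not repair this: the term $(I)$ would then require a total-variation version of Theorem \ref{th:chaos} (an expectation of a supremum), which is unavailable on an infinite state space — the per-site bounds are $O(N^{-1/2})$ but there are infinitely many sites to sum over, and Assumption $(A)$ gives no tail control.

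The paper closes the argument differently, precisely to avoid this. It splits at time $tu$, bounds the first term as you do, but then crushes the second term with the crude pathwise bound $d_{\textrm{TV}}(m(\eta_{tu}),m(\eta)T_{tu})\leq 1$, so that its expectation is at most $e^{-\rho t(1-u)}$ with no cardinality factor, and finally optimizes over the split fraction $u$; this yields \eqref{cor: cst} with exponent $\gamma=\rho/(2(B+\rho))$, valid on any countable $F^*$. The trade-off is instructive: where your bootstrap applies (finite $F^*$), iterating the contraction gives the strictly better exponent $\gamma=1/2$ — consistent with Remark \ref{rq:borne-unif}, which notes that finiteness of $F^*$ permits stronger conclusions — but as a proof of Corollary \ref{coro: IndTime} as stated, the factor $\mathrm{card}(F^*)$ is a real gap, not a technicality.
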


In particular, if $\eta$ is distributed according to the invariant measure $\nu_{N}$, then under the assumptions of the previous corollary, there exist $K_0>0$ and $\gamma>0$ such that
$$
\E\left[ \vert m(\eta)(\varphi) - \nu_{\textrm{qs}}(\varphi) \vert \right] \leq \dfrac {K_0} {N^{\gamma}},
$$
for every $\varphi$ satisfying $\Vert \varphi \Vert_\infty \leq 1$. Namely, under its invariant distribution, the particle system converges to the QSD. Without rate of convergence, this limiting result was proved in \cite[Theorem 2]{AFG} when $F$ is finite. Whereas, here, a rate of convergence is given. To our knowledge, it is the first bound of convergence for this limit. Whenever  $F^*$ is finite, the conclusion of the previous corollary holds with a less explicit $\gamma$ even when $\rho \leq 0$; see Remark \ref{rq:borne-unif}. Note also that, closely related, article \cite{MR06} gives a similar result when the underlying dynamics is diffusive instead of discrete. Its approach is completly different and based on martingale properties and on spectral properties associated to Schr\"{o}dinger equation.
\\

The remainder of the paper is as follows. Section \ref{sect:proof} gives the proofs of our main theorems; Subsection \ref{sect:temps} contains the proof of Theorem \ref{th:tl-general}, Subsection \ref{sect:chaos} the proof of Theorem \ref{th:chaos} and the last subsection the proof of the corollaries.
We conclude the paper with Sections \ref{sect:GC} and \ref{sect:two}, where we give the two examples mentioned above. The first one illustrates the sharpness of our results. The study of the second one is reduced to a very simple process for which few properties are known. It illustrates the need of general theorems as those previously introduced.

\section{Proof of the main theorems}
\label{sect:proof}
In this section, we prove Theorems \ref{th:tl-general} and \ref{th:chaos} and the corollaries stated before.
Let us recall that the generator of the Fleming-Viot process with $N$ particles applied to bounded functions $f: E\rightarrow \mathbb{R}$ and $\eta \in E$, is given by
\begin{equation}
\label{eq:generatorFV}
\mathcal{L}f(\eta) = \sum_{i\in F^{*}}  \eta(i) \sum_{j\in F^{*}} \left( Q_{i,j}+ p_{0}(i) \frac{\eta(j)}{N-1}\right) \left( f(T_{i \rightarrow j} \eta)-f(\eta)\right).
\end{equation}
Now let us give two remarks about the dynamics of the Fleming-Viot particle system.

\begin{Rq}[Translation of the death rate]
\label{rq:invariance}
Let $(P_t)_{t\geq0}$ and $(P'_t)_{t\geq0}$ be two semi-groups with the same transition rate $Q$ but different death rates $p_0,p_0'$ and let  $(T_t)_{t\geq 0}, (T'_t)_{t\geq 0}$ be their corresponding conditioned semi-groups respectively. Using the fact that
$$
P_t \un_{\{0\}^c} =\E\left[e^{- \int_0^t p_0(X_s) ds}\right] \ \text{ and } \ P'_t \un_{\{0\}^c} =\E\left[e^{- \int_0^t p'_0(X'_s) ds}\right],
$$
for every $t\geq0$, it is easy to see that $(T_t)_{t\geq0}=(T'_t)_{t\geq0}$ as soon as $p_0-p'_0$ is constant. This invariance by translation is not conserved by the Fleming-Viot processes. The larger $p_0$ is, the more jumps are obtained and the larger the variance becomes. This is why our criterion about the existence of QSD does not depend on $\inf(p_0)$ and why our propagation of chaos result depends on it.
\end{Rq}

\begin{Rq}[Non-explosion]
The particle dynamics guarantees the existence of the process $(\eta_t)_{t\geq 0}$ under the condition that there is no explosion. In other words, our construction is global as long as the particles only jump finitely many times in any finite time interval. We naturally assume that the Markov process with transition $Q$ is not explosive but it is not enough for the existence of the particle system. Indeed, an example of explosive Fleming-Viot particle system can be found in \cite{BBP12}. However, the assumption that $p_0$ is bounded is trivially sufficient to guarantee this non-explosion.
\end{Rq}

\subsection{Proof of Theorem \ref{th:tl-general}}
\label{sect:temps}

\begin{proof}[Proof of Theorem \ref{th:tl-general}]
We build a coupling between two Fleming-Viot particle systems,  $(\eta_{t})_{t\geq0}$ and $(\eta'_{t})_{t\geq0}$, generated by \eqref{eq:generator2}, starting respectively from some random configurations $\eta_{0},\eta'_{0}$ in $E$. We will prove that they will be closer and closer. 

Let us begin by roughly describing our coupling and then be more precise. For every $t\geq 0$, we set $\xi(t)=\xi=(\xi_1, \dots, \xi_N) \in (F^{*})^{N}$ and $\xi'(t)=\xi'=(\xi_1', \dots, \xi_N')$ the respective positions of the $N$ particles of the two configurations $\eta_t$ and $\eta'_t$. Then 
$$
\forall i \in F^*, \ \eta_t(i) = \textrm{card}\{ 1 \leq k \leq N  \ | \ \xi_k =i  \} \ \text{ and } \ \eta'_t(i) = \textrm{card}\{ 1 \leq k \leq N  \ | \ \xi'_k =i  \}.
$$
Distance $d_1(\eta,\eta')$ represents the number of particles which are not in the same site; namely, changing the indexation,
$$
d_1(\eta,\eta') = \textrm{card}\{ 1 \leq k \leq N  \ | \ \xi_k \neq \xi'_k  \}. 
$$
We then couple our two processes in order to maximize the chance that two particles coalesce. 
In a first time, we forget the interaction; we have two systems of $N$ particles evolving independently from each others. If two particles are in the same site,  $\xi_k=\xi_k'$, then the Markov property entails that we can make them jump together. When two particles are not in the same site, we can choose our jumps time in such a way that one goes to the second one, with positive probability. These steps are represented by the jumps rate $A_Q$ below. 

Nevertheless, the situation is trickier when we consider the interaction. Indeed, let us now disregard the underlying dynamics and only regard the interaction. If two particles are in the same site, $\xi_k=\xi_k'$, then they have to be killed and jump over the other particles. If the empirical measures are the same $\eta=\eta'$ then we can couple the two particles in such a way they die at the same time (because they are in the same site) and jump in the same site (because the empirical measures are equal). If $\eta\neq \eta'$ then we can not do this but we can maximize the probability to coalesce. Indeed there is $N -d_1(\eta,\eta')$ particles which are in the same site and then a probability $(N -d_1(\eta,\eta'))/(N-1)$ to coalesce. If two particles are not in the same site, $ \xi_k \neq \xi_k'$ , we can try to kill one before the other and put it in the same site. This is also not always possible.

Before expressing precisely the jumps rates, let us give some explanations.  We call first configuration the particles represented by $\{ \xi_k \}$ and the second configuration the particles represented by $\{ \xi'_k \}$. We speak about couple of particles when there are two particles coming from different configurations. There is $\eta(i) =\textrm{card}\{ k \ | \ \xi_k=i \}$ particles on the site $i$ and we can write
$$
\eta(i) = (\eta(i) - \eta'(i))_+ + \eta(i) \wedge \eta'(i),
$$
where $(\cdot)_+ = \max(0, \cdot)$. The part $\eta(i) \wedge \eta'(i)$ represents the number of couples of particles on $i$ and $(\eta(i) - \eta'(i))_+$ the rest of particles coming from the first configuration. Note that
\begin{align*}
\sum_{i\in F^{*}} (\eta(i)-\eta'(i))_+
&= d_1(\eta, \eta') = N - \sum_{i\in F^{*}} \eta(i) \wedge \eta'(i). 
\end{align*}

Now, we describe in detail our coupling. It is Markovian and we describe it by expressing its generator and its jumps rate; for every bounded function $f$ and $\eta, \eta' \in E$, its generator $\mathbb{L}$ is given by
$$
\mathbb{L}f(\eta, \eta') = \sum_{i,i',j,j'\in F^{*}} A(i,i',j,j') (f(T_{i \rightarrow j}\eta,T_{i' \rightarrow j'} \eta')-f(\eta,\eta')),
$$
where we decompose the jump rate $A$ into two parts $A = A_{Q} + A_{p}$. The jumps rate $A_{Q}$, that depends only on the transition rate $Q$, corresponds to the jumps related to the underlying dynamics, namely it is the dynamics when a particle does not die. A Markov process having only $A_Q$ as jumps rate corresponds to a coupling of two systems of $N$ particles evolving independently from each others.
The jumps rate $A_{p}$, corresponds to the redistribution dynamics and depends only on $p_0$; it does not depend on the underlying dynamics but only on the interaction. The construction of $A_Q$ is then more classic and the construction of $A_p$ is new and specific to this interaction. In what follows, we give the expressions of $A_p$ and $A_Q$; the points $i,i',j,j'$ are always different in twos.
\begin{itemize}
\item There are $\eta(i)\wedge \eta'(i)$ couples of particles on site $i\in F^*$.
\begin{itemize}
\item For each couple,  both particles can jump to the same site $j \in F^*$, at the same time and through the underlying dynamics. This gives the following jumps rate:
$$
 A_Q(i,i,j,j) = \left( \eta(i)\wedge \eta'(i)\right)  Q_{i,j}.
$$
\item Both of them can die at the same time. With probability  $\frac{\eta (j)\wedge \eta'(j)}{N-1}$, they can jump to the same site $j$; this gives
$$
 A_p(i,i,j,j) = p_0 (i) \left( \eta(i)\wedge \eta'(i)\right) \dfrac{\eta (j)\wedge \eta'(j)}{N-1}.
$$
With probability $\frac{\eta (i)\wedge \eta'(i)-1}{N-1}$, both particles jump where they come from and, so, this changes anything. With probability
\begin{equation}
\label{eq:taux-sitediff}
\left( 1 -  \frac{ \sum_{k\in F^*}\eta (k)\wedge \eta'(k) -1}{N-1} \right)\frac{(\eta(j)-\eta'(j))_+}{ \sum_{k\in F^*} (\eta(k)-\eta'(k))_+ } \frac{(\eta'(j')-\eta(j'))_+}{ \sum_{k\in F^*} (\eta'(k)-\eta(k))_+ },
\end{equation}
they can jump to two different sites $j,j'$. Indeed, with probability  $1 -  \frac{\sum_{k\in F^*}\eta (k)\wedge \eta'(k) -1}{N-1} $, they can jump in different sites, and conditionally on this event, with probability  $\frac{(\eta(j)-\eta'(j))_+}{ \sum_{k\in F^*} (\eta'(k)-\eta(k))_+ }$, the first particle jumps in site $j$ and, with probability $\frac{(\eta'(j')-\eta(j'))_+}{ \sum_{k\in F^*} (\eta'(k)-\eta(k))_+ }$, the second one jumps in site $j'$. Probability \eqref{eq:taux-sitediff} is equal to 
$$
\dfrac{(\eta(j)- \eta'(j))_+  \cdot (\eta'(j')-\eta(j'))_+}{ (N-1) d_1(\eta,\eta')}.
$$
In short, this gives the following jump rates:

$$
A_p(i,i,j,j') = p_0(i) \left(\eta(i)\wedge \eta'(i)\right) \dfrac{(\eta(j)- \eta'(j))_+ \cdot (\eta'(j')-\eta(j'))_+}{ (N-1) d_1(\eta,\eta')}.
$$
\end{itemize}
\item For every site $i \in F^*$ there are $(\eta(i)- \eta'(i))_+$ particles from the first configuration which are not in a couple. For each of theses particles, we choose, uniformly at random, a particle of the second configuration (which is not coupled with another particle as in the first point). This particle, chosen at random, is on the site $i'\in F^*$ with probability 
$$
\frac{(\eta'(i')-\eta(i'))_+}{\sum_{k} (\eta'(k)-\eta(k))_+}=\frac{(\eta'(i')-\eta(i'))_+}{d_1(\eta,\eta')}.
$$
\begin{itemize}
\item 
For one of these new couple of particles coming from sites $i\neq i'$, both particles can jump at the same time to the same site $j$ (different from $i,i'$), through the underlying dynamics; this gives
$$
A_Q(i,i',j,j) = (\eta(i)- \eta'(i))_+ \cdot \frac{(\eta'(i')-\eta(i'))_+}{d_1(\eta,\eta')} \cdot  \left(Q_{i,j}\wedge Q_{i',j}\right). 
$$ 
Nevertheless, these two particles do not have the same jump rates (because they do not come from the same site), so it is possible that one jumps to another site while the other one does not jump (also through the underlying dynamics); this gives
$$
 	A_Q(i,i',j,i') 
 	= (\eta(i)- \eta'(i))_+  \cdot \frac{ (\eta'(i')-\eta(i'))_+}{d_1(\eta,\eta')} \cdot (Q_{i,j} - Q_{i',j})_+,
 	$$ 
 and
$$
A_Q(i,i',i,j') = (\eta(i)- \eta'(i))_+  \cdot \frac{(\eta'(i')-\eta(i'))_+}{d_1(\eta,\eta')} \cdot (Q_{i',j'} - Q_{i,j'})_+ .
$$
Also, one of them can jump to the site of the second one:
	$$
	A_Q(i,i',i',i') = \frac{(\eta(i)- \eta'(i))_+ \cdot (\eta'(i')-\eta(i'))_+}{d_1(\eta,\eta')}Q_{i,i'},
	$$
and
	$$
	A_Q(i,i',i,i) = \frac{(\eta(i)- \eta'(i))_+ \cdot (\eta'(i')-\eta(i'))_+}{d_1(\eta,\eta')}Q_{i',i}.
	$$
\item We focus now our attention on the redistribution dynamics. We would like that both particles of a couple die at the same time and jump to the same site $j$ (where a couple of particles exists; that is with probability $\frac{\eta (j) \wedge \eta'(j)}{N-1}$). This gives: 
$$
A_p(i,i',j,j)	= (\eta(i)- \eta'(i))_+ \cdot \frac{(\eta'(i')-\eta(i'))_+}{d_1(\eta,\eta')} \cdot \left(p_0(i) \wedge p_0(i')\right) \cdot\dfrac{\eta (j) \wedge \eta'(j)}{N-1}
$$ 
But, even if they die at same time, they can jump to different sites with rate 
\begin{align*}
A_p(i,i',j,j') 
&= \left(p_0(i) \wedge p_0(i')\right) \dfrac{(\eta(i)- \eta'(i))_+ (\eta'(i')-\eta(i'))_+}{ d_1(\eta,\eta')}\\
&\quad \cdot \dfrac{(\eta(j)- \eta'(j))_+ (\eta'(j')-\eta(j'))_+}{ (N-1) d_1(\eta,\eta')}.
\end{align*}
 
However, this is not always possible to kill them at the same time. If they do not then the dying particle jumps uniformly to a particle of its configuration; this gives
$$ 	
  A_p(i,i',j,i')= (\eta(i)- \eta'(i))_+ \cdot \frac{ (\eta'(i')-\eta(i'))_+}{d_1(\eta,\eta')} \cdot \left(p_0(i) - p_0(i')\right)_+ \cdot \dfrac{\eta(j)}{N-1},
$$
and
$$
 A_p(i,i',i,j') = (\eta(i)- \eta'(i))_+  \cdot \frac{(\eta'(i')-\eta(i'))_+}{d_1(\eta,\eta')} \cdot \left(p_0(i') - p_0(i) \right)_+ \cdot \dfrac{\eta(j')}{N-1}.
 $$
\end{itemize}
\end{itemize}

We set, for every measurable function $f$,
$$
\mathbb{L}_{Q}f(\eta, \eta') = \sum_{i,i',j,j'\in F^{*}} A_Q(i,i',j,j') (f(T_{i \rightarrow j}\eta,T_{i' \rightarrow j'} \eta')-f(\eta,\eta')),
$$
and 
$$
\mathbb{L}_{p}f(\eta, \eta') = \sum_{i,i',j,j'\in F^{*}} A_p(i,i',j,j') (f(T_{i \rightarrow j}\eta,T_{i' \rightarrow j'} \eta')-f(\eta,\eta')).
$$  
Our coupling is totally defined. It is a little bit long but not difficult to verify that if a measurable function $f$ on $E\times E$ does not depend on its first (resp. second) variable; that is with a slight abuse of notation:
$$
\forall \eta,\eta'\in E, \ f(\eta,\eta')=f(\eta) \ (\text{resp. } \ f(\eta,\eta')=f(\eta')),
$$
then $\mathbb{L} f(\eta,\eta')= \mathcal{L} f(\eta)$ (resp. $\mathbb{L} f(\eta,\eta')= \mathcal{L} f(\eta')$). This property ensures that the couple $(\eta_t,\eta'_t)_{t\geq 0}$ generated by $\mathbb{L}$  is well a coupling of processes generated by $\mathcal{L}$ (that is of Fleming-Viot processes). Now, let us prove that the distance between $\eta_t$ and $\eta'_t$ decreases exponentially. We have 
\begin{align*}
\mathbb{L}_{p} d_1(\eta,\eta') 
&\leq \sum_{i\in F^{*}} p_0(i) \left( \eta(i) \wedge \eta'(i)\right) \frac{d_1 (\eta,\eta')}{N-1} \\
&-\sum_{i,i'\in F^{*}} \left( p_0(i) \wedge p_0(i') \right) \frac{(\eta(i)-\eta'(i))_+(\eta'(i')-\eta(i'))_+}{d_1(\eta,\eta')} \sum_{j\in F^{*}} \frac{\eta(j) \wedge \eta'(j)}{N-1}\\
&\leq \left( \sup(p_0)- \inf(p_0) \right) \frac{d_1 (\eta,\eta')}{N-1} \left( N-d_1(\eta, \eta') \right)\\
&\leq (\sup(p_0) - \inf(p_0)) d_1(\eta,\eta').
\end{align*}
Now,
\begin{align*}
\mathbb{L}_{Q}d_1 (\eta,\eta') 
&\leq - \sum_{i,i'\in F^{*}} \left( Q_{i,i'} + Q_{i',i} + \sum_{j \neq i,i'} Q_{i,j} \wedge Q_{i',j} \right)  \frac{(\eta(i)-\eta'(i))_+(\eta'(i')-\eta(i'))_+}{d_1(\eta,\eta')} \\ 
& \leq - \lambda d_1(\eta, \eta').
\end{align*}

We deduce that 
$\mathbb{L}d_1 (\eta,\eta') \leq - \rho d_1(\eta,\eta').$ Now let $(\mathbb{P}_{t})_{t \geq 0}$ be the semi-group associated with the generator $\mathbb{L}$. Using the equality $\partial_{t} \mathbb{P}_{t}f = \mathbb{P}_{t}\mathbb{L}f$ and Gronwall Lemma, we have, for every $t\geq 0$, $\mathbb{P}_t d_1 \leq e^{- \rho t} d_1$; namely
$$
\E[d_1(\eta_t,\eta'_t)]
\leq e^{-\rho t} \E[d_1(\eta_0, \eta'_0)].
$$
Taking the infimum over all couples $(\eta_{0},\eta'_{0})$, the claim follows. The existence and the uniqueness of an invariant distribution come from classical arguments; see for instance \cite[Theorem  5.23]{C04}.
\end{proof}

As it is easy to see that the distance $\mathcal{W}_{d_1}$ is larger than the total variation distance, we have the following consequence:

\begin{coro}[Coalescent time estimate]
\label{coro:coalescent}
For all $t\geq0$, we have 
$$
d_{\textrm{TV}}(\mathcal{L}(\eta_{t}), \mathcal{L}(\eta'_{t})) \leq e^{-\rho  t} \mathcal{W}_{d_1} (\mathcal{L}(\eta_0), \mathcal{L}(\eta'_0)).
$$
In particular, if $\rho>0$ the invariant distribution $\nu_{N}$ satisfies
$$
d_{\textrm{TV}}(\mathcal{L}(\eta_{t}), \nu_{N}) \leq e^{-\rho  t} \mathcal{W}_{d_1} (\mathcal{L}(\eta_0), \nu_{N}).
$$
\end{coro}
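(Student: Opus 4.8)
The plan is to reduce everything to one elementary observation, announced in the sentence preceding the statement: on $E\times E$ the distance $d_1$ dominates the indicator of the off-diagonal, so that $\mathcal{W}_{d_1}$ is stronger than the total variation distance. Once this is in hand, the two displayed bounds follow by a direct appeal to the contraction of Theorem~\ref{th:tl-general}.

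First I would record the key pointwise inequality $\un_{\{\eta\neq\eta'\}}\leq d_1(\eta,\eta')$ on $E\times E$. Indeed, for $\eta,\eta'\in E$ both configurations have total mass $N$, so the positive and negative parts of $\eta-\eta'$ have equal sums and $d_1(\eta,\eta')=\sum_{j\in F^{*}}(\eta(j)-\eta'(j))_+$ is a non-negative \emph{integer} that vanishes exactly when $\eta=\eta'$; hence it is at least $1$ whenever $\eta\neq\eta'$. Next, I would use the coupling characterization of the total variation distance recalled in the introduction, namely $d_{\textrm{TV}}(\mathcal{L}(\eta_t),\mathcal{L}(\eta'_t))=\inf_{\pi}\p_\pi(\eta_t\neq\eta'_t)$ where $\pi$ ranges over all couplings of $\mathcal{L}(\eta_t)$ and $\mathcal{L}(\eta'_t)$. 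For every such $\pi$, taking expectations in the pointwise bound gives $\p_\pi(\eta_t\neq\eta'_t)=\E_\pi[\un_{\{\eta_t\neq\eta'_t\}}]\leq\E_\pi[d_1(\eta_t,\eta'_t)]$, and since an inequality holding for every coupling is preserved under taking infima on both sides, this yields $d_{\textrm{TV}}(\mathcal{L}(\eta_t),\mathcal{L}(\eta'_t))\leq\mathcal{W}_{d_1}(\mathcal{L}(\eta_t),\mathcal{L}(\eta'_t))$. Combining with Theorem~\ref{th:tl-general} then produces
$$
d_{\textrm{TV}}(\mathcal{L}(\eta_t),\mathcal{L}(\eta'_t))\leq\mathcal{W}_{d_1}(\mathcal{L}(\eta_t),\mathcal{L}(\eta'_t))\leq e^{-\rho t}\,\mathcal{W}_{d_1}(\mathcal{L}(\eta_0),\mathcal{L}(\eta'_0)),
$$
which is the first assertion.

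For the statement concerning the invariant measure, assuming $\rho>0$ I would simply let the second process start from stationarity, $\mathcal{L}(\eta'_0)=\nu_N$. Then $\mathcal{L}(\eta'_t)=\nu_N$ for all $t\geq 0$ by invariance, and substituting this choice into the inequality just established gives $d_{\textrm{TV}}(\mathcal{L}(\eta_t),\nu_N)\leq e^{-\rho t}\mathcal{W}_{d_1}(\mathcal{L}(\eta_0),\nu_N)$, as required. There is essentially no hard step here: the entire content is the integrality remark $\un_{\{\eta\neq\eta'\}}\leq d_1$, which is precisely what makes $\mathcal{W}_{d_1}$ control the total variation distance, and the remainder is monotonicity of infima together with the contraction already proved.
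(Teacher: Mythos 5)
Your proposal is correct and follows essentially the same route as the paper: the coupling characterization of total variation, the pointwise bound $\un_{\{\eta\neq\eta'\}}\leq d_1(\eta,\eta')$, and then the contraction of Theorem~\ref{th:tl-general}. In fact you supply a detail the paper leaves implicit, namely the integrality argument showing $d_1\geq 1$ off the diagonal, so nothing is missing.
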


The proof is simple and given for sake of completeness.
\begin{proof}
Using Theorem \ref{th:tl-general}, we find
\begin{align*}
d_{\textrm{TV}}(\mathcal{L}(\eta_{t}), \mathcal{L}(\eta'_{t}))&= \inf_{\substack{\eta_{t} \sim \mathcal{L}(\eta_{t})\\ \eta'_{t} \sim \mathcal{L}(\eta'_{t})}} \E\left[ \un_{\eta_{t} \neq \eta'_{t}}\right]\\
&\leq \inf_{\substack{\eta_{t} \sim \mathcal{L}(\eta_{t})\\ \eta'_{t} \sim \mathcal{L}(\eta'_{t})}} \E\left[d_1(\eta_{t} ,\eta'_{t})\right]= \mathcal{W}_{d_1} (\mathcal{L}(\eta_t), \mathcal{L}(\eta'_t))\\
& \leq e^{-\rho t} \mathcal{W}_{d_1} (\mathcal{L}(\eta_0), \mathcal{L}(\eta'_0)).
\end{align*}
\end{proof}

\begin{Rq}[Generalization]
\label{rq:generalization}
As we can see at the end of the paper, in the case where $F^*$ contains only two elements, the coupling that we use is pretty good but our estimation of the distance is (in general) too rough. There is some natural way to change the bound/criterion that we found. The first one is to use another more appropriate distance. This technique is in general useful in other (Markovian) contexts \cite{CJ12,CH13, E13}. Another way is to find a contraction after a certain time: it is the Foster-Lyapunov-type techniques \cite{BCG,HM11,MT93}. This type of techniques give more general criteria but are useless for small times and the formulas we get are less explicit. All of these techniques will give different criteria that are not necessarily better. Finally note that, in all the paper, we can replace $\rho$ by
$$
\rho'= \inf_{i,i' \in F^*} \left\{ p_0(i) \wedge p_0(i') + Q_{i,i'} + Q_{i',i} + \sum_{j \neq i,i'} Q_{i,j} \wedge Q_{i',j} \right\} - \sup(p_0),
$$
and all conclusions hold. Indeed, we have to bound directly $\mathbb{L} d_1$ instead of bounding separately  $\mathbb{L}_Q d_1$ and  $\mathbb{L}_p d_1$.
\end{Rq}

\subsection{Proofs of Theorems \ref{th:chaos} and \ref{thm:cor}}
\label{sect:chaos}

The proof of Theorem \ref{th:chaos} is done in two steps. Firstly, we estimate the correlations between the number of particles over the sites and then we estimate the distance in total variation via the Kolmogorov equation. Let us introduce some notations. For every bounded functions $f,g$, every $\eta \in E$ and every random variable $X$, we set
$$
\cov_{\eta}[f(X), g(X)] = \E_{\eta}[f(X)g(X)] - \E_{\eta}[f(X)] \E_{\eta}[g(X)],
$$
and 
$$\var_{\eta}[f(X)] = \cov_{\eta}[f(X), f(X)].
$$

Let $(S_t)_{t\geq 0}$ be the semigroup of $(\eta_t)_{t\geq 0}$ defined by
$$
S_t f(\eta) = \E_\eta[ f(\eta_t)],
$$
for every $t\geq0$, $\eta\in E$ and bounded function $f$. If $\mu$ is a probability measure on $E$ and $t\geq 0$, then $\mu S_t$ is the measure defined by
$$
\mu S_t f = \int_E S_t f(y) \mu(dy).
$$
It represents the law of $\eta_t$ when $\eta_0$ is distributed according to $\mu$.  We also introduce the \textit{carr\'e du champ} operator $\Gamma$ defined, for any bounded function $f$ and $\eta\in E$, by
\begin{align}
\Gamma f(\eta) &= \mathcal{L}(f^2)(\eta) - 2 f(\eta) \mathcal{L} f(\eta)\label{eq:carre-du-champ}\\
&= \sum_{i,j\in F^{*}} \eta(i) \left(Q_{i,j} + p_0(i) \frac{\eta(j)}{N-1} \right) \left(f( T_{i \rightarrow j} \eta) - f(\eta) \right)^2.\nonumber
\end{align}

We present now an improvement of Theorem \ref{thm:cor}.
\begin{theo}[Correlations for Lipschitz functional]
\label{th:cor-gen} 
Let $g,h$ be two $1$-Lipschitz mappings on $(E,d_1)$; namely
$$
|g(\eta) - g(\eta')| \leq d_1(\eta,\eta') \ \text{ and } \ |h(\eta) - h(\eta')| \leq d_1(\eta,\eta') ,
$$
for every $\eta,\eta' \in E$. 
Under Assumption $(A)$ we have for all $t\geq 0$ and $\eta \in E$,
$$
\left| \cov_{\eta}(g(\eta_t),h(\eta_t))\right| \leq \frac{1 - e^{- 2\rho t}}{2\rho} \left(N \mathbf{Q_{1}} +  \mathbf{p}\frac{N^2}{N-1}\right),
$$
with the convention $(1 - e^{- 2\rho t}) \rho^{-1} =2t$ when $\rho=0$.
\end{theo}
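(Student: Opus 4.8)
The plan is to reduce the statement to a single variance estimate and then conclude by Cauchy--Schwarz. Since for any two bounded functions
$$
|\cov_{\eta}(g(\eta_t),h(\eta_t))| \leq \sqrt{\var_{\eta}(g(\eta_t))\,\var_{\eta}(h(\eta_t))},
$$
it suffices to prove, for a single $1$-Lipschitz function $f$, the bound $\var_{\eta}(f(\eta_t)) \leq \frac{1-e^{-2\rho t}}{2\rho}\bigl(N\mathbf{Q_1}+\mathbf{p}\frac{N^2}{N-1}\bigr)$. Note that since $d_1(\eta,\eta')\leq N$, every $d_1$-Lipschitz function is bounded, so these quantities are well defined.

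First I would establish a Duhamel-type commutation formula for the variance along the semigroup. Setting $\phi(s)=S_s\bigl((S_{t-s}f)^2\bigr)(\eta)$ for $s\in[0,t]$, one has $\phi(0)=(S_tf(\eta))^2$ and $\phi(t)=S_t(f^2)(\eta)$, hence $\var_{\eta}(f(\eta_t))=\phi(t)-\phi(0)=\int_0^t \phi'(s)\,ds$. Differentiating, using $\partial_s S_{t-s}f=-\mathcal{L}S_{t-s}f$ and the definition \eqref{eq:carre-du-champ} of the \emph{carré du champ}, the two resulting terms combine into $\phi'(s)=S_s\bigl(\Gamma(S_{t-s}f)\bigr)(\eta)$, which gives the representation
$$
\var_{\eta}(f(\eta_t))=\int_0^t S_s\bigl(\Gamma(S_{t-s}f)\bigr)(\eta)\,ds.
$$

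The key step, and the one where Theorem \ref{th:tl-general} enters, is controlling the Lipschitz constant of $S_{t-s}f$. Running the coupling built in the proof of Theorem \ref{th:tl-general} from the Dirac masses $\delta_\eta,\delta_{\eta'}$ and using that $f$ is $1$-Lipschitz, I get $|S_uf(\eta)-S_uf(\eta')|\leq \E[d_1(\eta_u,\eta'_u)]\leq e^{-\rho u}\,\mathcal{W}_{d_1}(\delta_\eta,\delta_{\eta'})=e^{-\rho u}d_1(\eta,\eta')$, so $S_uf$ is $e^{-\rho u}$-Lipschitz. Plugging $\psi=S_{t-s}f$ into the carré du champ and using $d_1(T_{i\to j}\eta,\eta)=1$, each squared increment $(\psi(T_{i\to j}\eta)-\psi(\eta))^2$ is at most $e^{-2\rho(t-s)}$, whence by Assumption $(A)$
$$
\Gamma(S_{t-s}f)(\eta)\leq e^{-2\rho(t-s)}\sum_{i,j\in F^*}\eta(i)\left(Q_{i,j}+p_0(i)\frac{\eta(j)}{N-1}\right)\leq e^{-2\rho(t-s)}\left(N\mathbf{Q_1}+\mathbf{p}\frac{N^2}{N-1}\right),
$$
where I used $\sum_{i,j}\eta(i)Q_{i,j}\leq \mathbf{Q_1}\sum_i\eta(i)=N\mathbf{Q_1}$ and $\sum_{i,j}\eta(i)p_0(i)\eta(j)/(N-1)\leq \frac{N}{N-1}\mathbf{p}\sum_i\eta(i)=\mathbf{p}N^2/(N-1)$. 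As $S_s$ is Markovian it preserves this uniform bound, and integrating $\int_0^t e^{-2\rho(t-s)}\,ds=(1-e^{-2\rho t})/(2\rho)$ yields the variance estimate; the case $\rho=0$ follows from the stated convention.

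The main obstacle is the rigorous justification of the commutation formula: differentiating $\phi$ requires interchanging $\partial_s$ with $S_s$ and with the expectation, which is licit because Assumption $(A)$ bounds the total jump rate and makes $\mathcal{L}$ act boundedly on the bounded functions $S_{t-s}f$. One must also check that the contraction coming from the coupling is a genuine \emph{pointwise} Lipschitz bound on $S_uf$ rather than merely a bound between laws; this is ensured by starting the coupling from Dirac masses, for which $\mathcal{W}_{d_1}(\delta_\eta,\delta_{\eta'})=d_1(\eta,\eta')$. The remaining steps are the routine bookkeeping indicated above.
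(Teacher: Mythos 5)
Your proposal is correct and follows essentially the same route as the paper's proof: the variance representation $\var_{\eta}(f(\eta_t))=\int_0^t S_s\Gamma(S_{t-s}f)(\eta)\,ds$, the use of Theorem \ref{th:tl-general} (via the coupling started from deterministic configurations) to show $S_{t-s}f$ is $e^{-\rho(t-s)}$-Lipschitz, the resulting bound $\Vert\Gamma S_{t-s}f\Vert_\infty\leq e^{-2\rho(t-s)}\bigl(N\mathbf{Q_1}+\mathbf{p}\frac{N^2}{N-1}\bigr)$, and Cauchy--Schwarz to pass to covariances. The only difference is cosmetic: you invoke Cauchy--Schwarz at the start rather than at the end.
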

In particular, if $\rho>0$ then the previous bound is uniform.

\begin{proof}
For any function $g$ on $E$ and $t\geq 0$, we have
$$
\var_{\eta} (g(\eta_t)) =  S_t(g^2)(\eta)-(S_t g)^2(\eta) = \int_0^t S_s \Gamma S_{t-s}g (\eta) ds.
$$

Indeed, setting, for any $s\in [0,t]$ and $\eta \in E$, $\Psi_{\eta}(s) = S_s \left[ (S_{t-s}g)^{2}\right](\eta)$ and $\psi(s)=S_{t-s}g$, we get
$$
\forall s\geq 0, \ \Psi_{\eta}'(s) = S_s \left[ \mathcal{L}\psi^{2} - 2 \psi \mathcal{L}\psi\right](\eta) = S_s \Gamma \psi(s)(\eta),
$$
and so,
\begin{align*}
\var_{\eta} (g(\eta_t))&= \Psi_{\eta}(t) -\Psi_{\eta}(0)= \int_0^t S_s \Gamma S_{t-s}g (\eta) ds.
\end{align*}
Now, if $g$ is a $1$-Lipschitz mapping with respect to $d_1$ then
\begin{align*}
\mid S_{t-s}g(T_{i \rightarrow j} \eta) - S_{t-s}g(\eta)\mid
&\leq \E\left[|g( \eta'_{t-s}) - g(\eta_{t-s})|\right] \leq  \E\left[d_{1}(\eta_{t-s}, \eta'_{t-s})\right],
\end{align*}
where $\eta_{t-s}, \eta'_{t-s}$ evolve as Fleming-Viot particle systems with initial conditions $\eta$ and $T_{i \rightarrow j} \eta$. Thus, using Theorem \ref{th:tl-general}, we obtain
\begin{align}
\mid S_{t-s} g(T_{i \rightarrow j} \eta) - S_{t-s}g(\eta)\mid 
&\leq \mathcal{W}_{d_1} (\mathcal{L}(\eta_{t-s}), \mathcal{L}(\eta'_{t-s})) \label{eq:KR}\\
&\leq  e^{- \rho (t-s)} d_1(T_{i \rightarrow j} \eta, \eta)\nonumber\\
&\leq e^{- \rho (t-s)}\un_{i \neq j}.\nonumber
\end{align}
Hence,
$$
\Vert \Gamma S_{t-s} g \Vert_ \infty = \sup_{\eta\in E} \mid \Gamma S_{t-s} g (\eta)\mid   \leq e^{- 2\rho (t-s)} \left(N \mathbf{Q_{1}} + \mathbf{p} \frac{N^2}{N-1} \right).
$$

Finally, the Cauchy-Schwarz inequality and the first part of the proof give
\begin{align*}
\vert \cov_{\eta}(g(\eta_{t}), h(\eta_{t})) \vert 
&\leq  \var_{\eta}(g(\eta_{t}))^{1/2}\var_{\eta}(h(\eta_{t}))^{1/2}\\
&\leq \frac{1 - e^{- 2\rho t}}{2\rho} \left(N \mathbf{Q_{1}} +  \mathbf{p}\frac{N^2}{N-1} \right).
\end{align*}
\end{proof}


\begin{proof}[Proof of Theorem \ref{thm:cor}]
Fix $l\in F^*$ and set $\varphi_l :\eta \mapsto \eta(l)$. The function $ \varphi_l/2$ is a $1$-Lipschitz mapping with respect to $d_1$, so we apply the previous theorem .
\end{proof}

\begin{Rq}[Generalization]
\label{cor-amel}
Assume that there exist $C>0$ and $\lambda>0$ such that for any processes $(\eta_{t})_{t>0}$ and $(\eta'_{t})_{t>0}$ generated by \eqref{eq:generator2}, and for any $t > 0$, we have
\begin{equation}
\label{eq:cv-FV}
\mathcal{W}_{d_1} (\mathcal{L}(\eta_t), \mathcal{L}(\eta'_t)) \leq C e^{-\lambda t} \mathcal{W}_{d_1} (\mathcal{L}(\eta_0), \mathcal{L}(\eta'_0)),
\end{equation}
then under the previous assumptions we have, for all $t\geq 0$,
$$
\cov_{\eta}(\eta_{t}(k)/N,\eta_{t}(l)/N) \leq \frac{2 C}{N^2} \frac{1 - e^{- 2\lambda t}}{\lambda} \left(N \mathbf{Q_{1}} +  \mathbf{p}\frac{N^2}{N-1}\right).
$$
A bound like \eqref{eq:cv-FV} is proved when the state space $F^*$ contains only two points. 
\end{Rq}

\begin{proof}[Proof of Theorem \ref{th:chaos}]

The proof is based on a bias-variance type decomposition. The variance is bounded through Theorem \ref{thm:cor} and the bias through Gronwall-type argument. More precisely, for $t \geq 0$, we have
\begin{equation}
\label{eq:BV}
\sup_{\Vert \varphi \Vert_\infty \leq 1} \E_{\eta} \left[\left| m(\eta_t)(\varphi) - \mu T_{t} \varphi \right| \right]  \leq \sup_{\Vert \varphi \Vert_\infty \leq 1} \E_{\eta} \left[ \left| m(\eta_t)(\varphi) - \overline{m}(\eta_t)(\varphi) \right| \right] + 2 d_{\textrm{TV}}( \overline{m}(\eta_t), \mu T_{t} ),
\end{equation}
where $\overline{m}(\eta_t)$ is the empirical mean measure; namely $\overline{m}(\eta_t) (k) =\E[m(\eta_t)(k)]$, for every $k\in F^*$. Let $\varphi$ be a function such that $\Vert \varphi \Vert_\infty \leq 1$. Cauchy-Schwarz inequality gives
\begin{align*}
\E_{\eta} \left[\left | m(\eta_t)(\varphi) - \overline{m}(\eta_t)(\varphi) \right| \right] \leq 
2N^{-1}\var(g_{\varphi}(\eta_t))^{1/2},
\end{align*}
where $g_{\varphi}: \eta\mapsto \frac{1}{2}\sum_{k \in F^{*}} \eta(k) \varphi(k)= \frac{N}{2} m(\eta)(\varphi)$ is a 1-Lipschitz function. So by  Theorem \ref{th:cor-gen} we have
\begin{align*}
\sup_{\Vert \varphi \Vert_\infty \leq 1} \E_{\eta} \left[ \left| m(\eta_t)(\varphi) - \overline{m}(\eta_t)(\varphi) \right| \right] \leq \sqrt{2\rho^{-1}(1-e^{-2\rho t})(\mathbf{Q_{1}} + \mathbf{p}) (N-1)^{-1}}.
\end{align*}

Now, to study the bias term in \eqref{eq:BV}, let us introduce the following notations
$$
u_{k}(t) = \E_{\eta}[m(\eta_t)(k)] \ \text{ and } \ v_{k}(t) = \mu T_{t}(k).
$$
It is well known that $(\mu T_t)_{t\geq 0}$ is the unique measure solution to the (non-linear) Kolmogorov forward type equations: $\mu T_0 = \mu$, and
\begin{equation}
\label{eq:kolmo}
\forall t\geq 0,  \ \partial_t \mu T_t(j) =  \sum_{i\in F^{*}} \left( Q_{i,j} \ \mu T_t(i) + \  p_0(i) \ \mu T_t(i) \ \mu T_t (j)\right).
\end{equation}
Thus
$$
\partial_{t} v_{k}(t) = \sum_{i\in F^{*}} Q_{i,k} v_{i}(t) + \sum_{i\in F^{*}} p_{0}(i) v_{i}(t)v_{k}(t).
$$
Also, $u_{k}(t)= \E_{\eta}[m(\eta_t)(k)]= S_t f(\eta)$, where $f :\eta\mapsto m(\eta)(k)$ and $(S_t)_{t\geq 0}$ is the semi-group of $(\eta_t)_{t\geq 0}$, thus, using \eqref{eq:generator2}, the equality $\partial_t S_tf = \mathcal{L} S_tf$ and the convention that $p_0(i) + \sum_{j \in F^{*}} Q_{i,j} = 0$ for every $i \in F^{*}$, we find 
$$
\partial_{t} u_{k}(t) = \sum_{i\in F^{*}} Q_{i,k} u_{i}(t) + \sum_{i\in F^{*}} p_{0}(i) u_{i}(t)u_{k}(t) - \dfrac{p_{0}(k)}{N-1} u_{k}(t) + R_{k}(t),
$$
where 
\begin{align*}
R_{k}(t)  
&= \sum_{i\in F^{*}} p_{0}(i)\left(\dfrac{N}{N-1} \E_{\eta}(m(\eta_t)(i)m(\eta_t)(k)) - \E_{\eta}(m(\eta_t)(i))\E_{\eta}(m(\eta_t)(k))\right)\\
&=  \E_{\eta}\left(\left(\sum_{i\in F^{*}} p_{0}(i) m(\eta_t)(i) \right)m(\eta_t)(k)\right) - \E_{\eta}\left(\sum_{i\in F^{*}} p_{0}(i)m(\eta_t)(i)\right) \E_{\eta} \left(m(\eta_t)(k) \right) \\
&+ (N-1)^{-1} \E_{\eta}\left(\left(\sum_{i\in F^{*}} p_{0}(i) m(\eta_t)(i) \right) m(\eta_t)(k)\right).
\end{align*}

For $t \geq 0$, let us define $\epsilon(t) = \sum_{k \in F^{*}} \vert u_k(t) - v_k(t) \vert= 2 d_{\textrm{TV}}( \overline{m}(\eta_t), \mu T_{t})$. Using triangular inequality, Fubini-Tonelli Theorem and Assumption $(A)$, we have

\begin{align*}
\epsilon(t)
&= \sum_{k \in F^{*}} \left| u_k(0) -v_k(0) + \int_0^t \partial_s (u_k(s) -v_k(s)) ds \right|\\
&\leq \epsilon(0) + \sum_{k \in F^{*}} \int_0^t \left| \sum_{i\in F^{*}} Q_{i,k} (u_i(s) - v_{i}(s)) \right| + \sum_{k \in F^{*}} \int_0^t \left( \dfrac{p_{0}(k)}{N-1} u_{k}(s) +  \left|R_{k}(s)\right| \right) ds \\
&\quad + \sum_{k \in F^{*}} \int_0^t \left| \sum_{i\in F^{*}} p_{0}(i) \left[ v_{i}(s)(u_k(s) - v_k(s)) + u_{k}(s) (u_i(s) - v_{i}(s)) \right]\right| ds\\
&\leq \epsilon(0) +  \int_0^t \left(\mathbf{Q_{1}} +2\mathbf{p}\right)\epsilon(s) ds + \dfrac{\mathbf{p}t}{N-1} + \int_0^t \sum_{k \in F^{*}} |R_{k}(s)| ds.
\end{align*}
However, by Cauchy-Schwarz inequality and Theorem \ref{th:cor-gen} with the $1$-Lipschitz function $g: \eta \mapsto \frac{1}{2\mathbf{p}} \sum_{i \in F^*} p_0(i) \eta(i)$, we have
\begin{align*}
\sum_{k\in F^*}|R_{k}(t)|
&\leq \sum_{k\in F^*} \E_{\eta} \left( m(\eta_t)(k) \left| \sum_{i \in F^*} p_0(i) m(\eta_t) (i) - \E_{\eta} \left(\sum_{i \in F^*} p_0(i) m(\eta_t) (i) \right) \right| \right) + \mathbf{p} (N-1)^{-1}\\ 
&\leq 2 \mathbf{p} N^{-1} \var_{\eta}(g(\eta_t))^{\frac{1}{2}}  + \mathbf{p} (N-1)^{-1}\\ 
&\leq \mathbf{p} \sqrt{2\rho^{-1}(1-e^{-2\rho t})(\mathbf{Q_{1}} +\mathbf{p})(N-1)^{-1}} + \mathbf{p}(N-1)^{-1}.
\end{align*}

If $c_t =\rho^{-1}(1-e^{-2\rho t})$, $B=\mathbf{Q_{1}} +2\mathbf{p}$ then Gronwall's lemma gives
\begin{align}
\varepsilon(t) 
&\leq \varepsilon(0) e^{B t} + \int_0^t e^{B(t-s)} \left(\frac{2 \mathbf{p} \sqrt{2 B}}{(N-1)^{1/2}} \sqrt{c_s} + \frac{2\mathbf{p}}{N-1}\right) ds \nonumber \\
&\leq  \left(\varepsilon(0) + \frac{2\mathbf{p}}{(N-1)B} + \frac{2 \mathbf{p} \sqrt{2 B}}{(N-1)^{1/2}} \int_0^t e^{- B s}\sqrt{c_s} ds \right) e^{B t} \nonumber \\
&\leq \left(\varepsilon(0) + \frac{A}{\sqrt{N}}\right)e^{Bt} \nonumber,
\end{align}
for some $A>0$.

\end{proof}

\subsection{Proof of the corollaries}

In this subsection, we give the proofs of corollaries given in the introduction.
\begin{proof}[Proof of Corollary \ref{coro:CV-QSD}]

The proof is based on an approximation of the conditioned semigroups by two particle systems. Theorem \ref{th:tl-general} gives a contraction for these particle systems. We then use Theorem \ref{th:chaos} and a discretization argument to prove that it implies a contraction for the conditioned semigroups.

Let $(m_0^{(N)})_{N \geq 0}$ and $(\widetilde{m}_0^{(N)})_{N \geq 0}$ be two sequences of probability measures that converge to $\mu$ and $\nu$ respectively , as $N$ tends to infinity, and such that $\eta_0^{(N)} = (N m_0^{(N)}(k))_{k\in F^*}\in E^{(N)}$ and $\widetilde{\eta}_0^{(N)}=(N \widetilde{m}_0^{(N)}(k))_{k\in F^*} \in E^{(N)}$, for every $N\geq 0$. The existence of these two sequences can be proved via the law of large numbers. Now, for each $N\geq 0$ and $t\geq 0$, Theorem \ref{th:tl-general} establishes the existence of a coupling between $\eta^{(N)}_t$ and $\widetilde{\eta}^{(N)}_t$, where each of its components is generated by \eqref{eq:generatorFV}, with initial condition $(\eta_0^{(N)},\widetilde{\eta}_0^{(N)})$) which satisfies
$$
 N^{-1} \E\left[ d_1(\eta^{(N)}_t, \widetilde{\eta}^{(N)}_t)\right] \leq e^{- \rho t}d_{\text{\textrm{TV}}}\left(m_0^{(N)}, \widetilde{m}_0^{(N)} \right).
$$
Now let us prove that we can take the limit $N\rightarrow + \infty$. Since $F$ is countable and discrete, there exists an increasing sequence of finite sets $(F^*_n)_{n\geq0}$ such that $F^*= \cup_{n \geq 0} F^*_n$ and

\begin{align*}
d_{\textrm{TV}}(\mu T_t,\nu T_t)
&= \frac{1}{2} \sum_{k \in F^*} |\mu T_t \un_{\{ k \}} - \nu T_t \un_{\{k\}}| = \lim_{n\rightarrow + \infty} \frac{1}{2} \sum_{k \in F^*_n} |\mu T_t\un_{\{k\}} - \nu T_t\un_{\{k\}}|.
\end{align*}
The previous bound gives
$$
\E\left[\frac{1}{2} \sum_{k \in F^*_n} \left| \frac{\eta_t^{(N)}(k)}{N} - \frac{\widetilde{\eta}^{(N)}_t(k)}{N} \right|\right] \leq N^{-1} \E\left[ d_1(\eta^{(N)}_t, \widetilde{\eta}^{(N)}_t)\right] \leq e^{- \rho t}d_{\text{\textrm{TV}}}\left(m_0^{(N)}, \widetilde{m}_0^{(N)} \right).
$$
Using Theorem \ref{th:chaos} and taking the limit $N\rightarrow + \infty$, we find
\begin{align*}
\frac{1}{2} \sum_{k \in F^*_n} |\mu T_t\un_{\{k\}} - \nu T_t\un_{\{k\}}|
&\leq e^{- \rho t}d_{\text{\textrm{TV}}}\left(\mu, \nu \right).
\end{align*} 
Indeed, as we work in discrete space, the convergence in distribution is equivalent to that in total variation distance:
$$
\lim_{N \rightarrow + \infty} d_{\textrm{TV}} (m_0^{(N)}, \mu) = \lim_{N \rightarrow + \infty}  d_{\textrm{TV}} (\widetilde{m}_0^{(N)}, \nu)= 0.
$$
Furthermore all sequences in the expectations are increasing. Thus, taking the limit $n\rightarrow +\infty$, we obtain \eqref{eq:contractionT}. Finally, the existence of a QSD can be proved as in the proof of \cite[Theorem 1]{MMV12}. More precisely, let $\mu$ be any probability measure on $F^*$. We have, for all $s,t\geq 0$ such that $s\geq t$,
\begin{align*}
d_{TV}(\mu T_t, \mu T_s) 
&= d_{TV}(\mu T_t, \mu T_{s-t + t})= d_{TV}(\mu T_t, (\mu T_{s-t}) T_t) \leq e^{-\rho t}.
\end{align*}
Thus $(\mu T_t)_{t\geq0}$ is a Cauchy sequence for the total variation distance and thus admits a limit $\nu_{\textrm{qs}}$. This measure is then proved to be a QSD by standard arguments; see for instance \cite[Proposition 1]{MV12}.
\end{proof}

\begin{Rq}[Weaker assumptions]
\label{rq:cv-qsd}
Assumption $(A)$ is not necessary and even useless in the previous corollary. Indeed, using \cite[Theorem 1]{V11} and a similar argument of approximation, it's enough that the particle system does not explode. However, we used this proof for sake of completeness.
\end{Rq}

We can now proceed to the proof of the second corollary.
\begin{proof}[Proof of Corollary \ref{coro: IndTime}]
The proof is based on an "interpolation" between the bounds obtained in Corollary \ref{coro:CV-QSD} and Theorem \ref{th:chaos}.

Let us fix $t>0$, $u\in [0,1]$ and $\varphi$ a function such that $\Vert \varphi \Vert_{\infty} \leq 1$. By the Markov property, we have
\begin{align*}
\E_{\eta} [\vert m(\eta_t)(\varphi) - m(\eta) T_t \varphi\vert]
&\leq \E_{\eta} \left[ \vert m(\eta_t)(\varphi) - m(\eta_{tu}) T_{t(1-u)}\varphi\vert \right] \\
&\quad \ +  \E_{\eta}\left[\vert m(\eta_{tu}) T_{t(1-u)}\varphi - m(\eta) T_t\varphi\vert\right] \\
&\leq \sup_{\Vert \varphi \Vert_{\infty} \leq 1}\E_{\eta}\left[ \widetilde{\E}_{\eta_{tu}} \left[ \vert m(\widetilde{\eta}_{t(1-u)})(\varphi) - m(\eta_{tu}) T_{t(1-u)}\varphi \vert\right]\right] \\
&\quad \ + \E_{\eta}\left[d_{\textrm{TV}}(m(\eta_{tu}) T_{t(1-u)}, m(\eta) T_{ut} T_{t(1-u)})\right],
\end{align*}
where $(\widetilde{\eta}_t)_{t\geq 0}$ is a Markov process generated by $\eqref{eq:generator2}$ and where, for all $\eta \in E$, we denote by $\widetilde{\E}_{\eta}$ the conditional expectation of $(\widetilde{\eta}_t)_{t\geq 0}$ given the event $\{ \widetilde{\eta}_0 = \eta\}$. On the one hand, by Theorem \ref{th:chaos}, which is a uniform estimate on the initial condition, there exist $B, C >0$ such that
$$
\sup_{\Vert \varphi \Vert_{\infty} \leq 1} \tilde{\E}_{\eta_{tu}}\left[ \vert m(\widetilde{\eta}_{t(1-u)})(\varphi) - m(\eta_{tu}) T_{t(1-u)}\varphi\vert\right]\leq \dfrac{Ce^{Bt (1-u)}}{\sqrt{N-1}}. 
$$
On the other hand, from Corollary \ref{coro:CV-QSD}, we have
$$
\E_{\eta}\left[d_{\textrm{TV}}(m(\eta_{tu}) T_{t(1-u)}, m(\eta) T_{ut} T_{t(1-u)})\right] \leq e^{- \rho t(1-u)}.
$$
Choosing 
$$
u= 1 + \frac{1}{t(B+ \rho)} \log \left( \frac{BC}{\rho \sqrt{N-1}} \right),
$$
this gives
\begin{equation}
\label{cor: cst}
\sup_{\Vert \varphi \Vert_{\infty} \leq 1} \E_{\eta} \left[\vert m(\eta_t)(\varphi) - m(\eta) T_t\varphi\vert \right] \leq \frac{B + \rho}{B}\left( \frac{BC}{\rho \sqrt{N-1}} \right)^{\frac{\rho}{B+\rho}}.
\end{equation}

\end{proof}

\begin{Rq}[Weaker assumptions]
\label{rq:borne-unif}
We can weaken the assumption $\rho>0$ in the previous corollary. Indeed, it is enough to assume that there exist $C>0$ and $\lambda>0$ such that
$$
\forall t\geq 0, \ d_{\textrm{TV}}(\mu T_{t}, \nu T_{t})\leq C e^{-\lambda t}.
$$
Some sufficient conditions are given in \cite{DS67,MM02,MMV12}. We can also use a bound of convergence for the Fleming-Viot particle system as in Theorem \ref{th:tl-general}. In particular, when $F^*$ is finite, the particle system converges, uniformly in time, to the conditioned process; hence, if $\eta$ is distributed by the invariant distribution of the particle system (it exists since $E$ is finite) then it converges in law towards the quasi-stationary distribution.
\end{Rq}

\section{Complete graph dynamics}
\label{sect:GC}
In all this section, we study the example of a random walk on the complete graph. Let us fix $K\in \N^*$, $p>0$ and $N\in \N^*$, the dynamics of this example is as follows: we consider a model with $N$ particles and $K+1$ vertices $0,1,\dots, K$. The $N$ particles move on the $K$ vertices $1,\dots, K$ uniformly at random and jump to $0$ with rate $p$. When a particle reaches the node $0$, it jumps instantaneously over another particle chosen uniformly at random. This particle system corresponds to the model previously cited with parameters 
$$
Q_{i,j} = \frac{1}{K}, \quad \forall i,j \in F^{*}=\{1,\dots, K\}, i\neq j \ \text{and} \ p_0(i) = p, \quad \forall i\in F^{*}.
$$
The generator of the associated Fleming-Viot process is then given by
\begin{equation}
\label{eq:generator}
\mathcal{L}f(\eta) = \sum_{i=1}^{K} \eta(i) \left[  \sum_{j=1}^{K} (f(T_{i\rightarrow j} \eta)-f(\eta))\left( \dfrac{1}{K}+ p \dfrac{\eta(j)}{N-1} \right)\right],
\end{equation}
for every function $f$ and $\eta \in E$.

A process generated by \eqref{eq:generator} is an instance of inclusion processes studied in \cite{GRV10,GRV11,GRV12}. It is then related to models of heat conduction. One main point of \cite{GRV10,GRV11} is a criterion ensuring the existence and reversibility of an invariant distribution for the inclusion processes. In particular, they give an explicit formula of the invariant distribution of a process generated by \eqref{eq:generator}
 and we give this expression in Subsection \ref{subsec:invariant}. They also study different scaling limits which seem to be irrelevant for our problems.

Another application of this example comes from population genetics. Indeed,this model can also be referred as \textit{neutral evolution}, see for instance \cite{E12,W76}. More precisely, consider $N$ individuals possessing one type in $F^*=\{1, \dots, K\}$ at time $t$. Each pair of individuals
interacts at rate $p$. Upon an interacting event, one individual dies and the other one
reproduces. In addition, every individual changes its type (mutates) at rate $1$ and chooses uniformly at random a new
type in $F^*$. The measure $m(\eta_t)$ gives the proportions of types. The kind of mutation we consider here is often
referred to as parent-independent or the house-of-cards model.

In all this section, for any probability measure $\mu$ on $E$, we set in a classical manner $\E_{\mu}[ \cdot ] = \int_{F^*} \E_x[ \cdot ] \mu(dx)$ and $\p_{\mu} = \E_\mu[ \un_{\cdot}]$; similarly $\cov_\mu$ and $\var_\mu$ are defined with respect to $\E_\mu$.  

\subsection{The associated killed process}
We define the process $(X_t)_{t\geq 0}$ by setting
$$
X_t = \left\{
    \begin{array}{ll}
        Z_t & \mbox{if } t < \tau \\
        0 & \mbox{if } t \geq  \tau,
    \end{array}
\right.
$$
where $\tau$ is an exponential variable with mean $1/p$ and $(Z_t)_{t\geq 0}$ is the classical complete graph random walk (i.e. without extinction) on $\{1,\dots, K\}$. We have, for any bounded function $f$, 
$$
T_t f(x) = \E\left[ f(X_t) \ | \ X_0=x, X_t \neq 0 \right], \quad t\geq 0, x\in F^*.
$$
The conditional distribution of $X_t$ is simply given by the distribution of $Z_t$ :
$$
                         \p (X_t = i \ | \ X_t \neq 0) = \p (Z_t = i).
$$
The study of $(Z_t)_{t\geq 0}$ is trivial. Indeed, it converges exponentially fast to the uniform distribution $\pi_K$ on $\{1,\dots, K\}$. We deduce that for all $t  \geq 0$ and all initial distribution $\mu$, 
$$
d_{\textrm{TV}}(\mu T_{t}, \pi_{K}) = \sum_{i=1}^K \left|\p_{\mu} (X_t = i \ | \ \tau > t) - \pi_{K}(i)\right| \leq e^{- t}. 
$$
 
Thus in this case, the conditional distribution of $X$ converges exponentially fast to the Yaglom limit $\pi_K$.

\subsection{Correlations at fixed time}

The special form of $\mathcal{L}$, defined at \eqref{eq:generator}, makes the calculation of the two-particle correlations at fixed time easy. 
\begin{theo}[Two-particle correlations]
\label{th:cor-GC}
For all $k,l \in \lbrace 1, \dots , K\rbrace$, $k\neq l$ and any probability measure $\mu$ on $E$, we have for all $t\geq 0$
\begin{align*}
\cov_{\mu}(\eta_{t}(k),\eta_{t}(l))
& = \E_{\mu}\left[ \eta_{0}(k)\eta_{0}(l)\right] e^{-\frac{2K(N-1+p)}{K(N-1)}t} \\
&\quad + \dfrac{-N+1+2pN}{K(N-1+2p)} (\E_{\mu}\left[ \eta_{0}(k)\right]+\E_{\mu}\left[ \eta_{0}(l)\right])e^{-t}\\
&\quad - \E_{\mu}\left[ \eta_{0}(k)\right]\E_{\mu}\left[ \eta_{0}(l)\right]e^{-2t} + \dfrac{-N^{2}(p+1)+N}{K^{2}(N-1+p)}.
\end{align*}
\end{theo}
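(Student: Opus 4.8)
The plan is to turn the statement into a closed system of linear ordinary differential equations for the moments of $(\eta_t)_{t\geq 0}$ and to integrate it explicitly. The basic tool is the forward equation $\frac{d}{dt}\E_\mu[f(\eta_t)]=\E_\mu[\mathcal{L}f(\eta_t)]$, which I would apply to the three functions $f=\eta(k)$, $f=\eta(l)$ and $f=\eta(k)\eta(l)$. Writing $M_k(t)=\E_\mu[\eta_t(k)]$ and $M_{kl}(t)=\E_\mu[\eta_t(k)\eta_t(l)]$, the goal is to show that these quantities satisfy an autonomous linear system whose solution produces the three exponential rates and the constant appearing in the statement, after which the covariance is recovered from the definition $\cov_\mu(\eta_t(k),\eta_t(l))=M_{kl}(t)-M_k(t)M_l(t)$.

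First I would compute $\mathcal{L}\eta(k)$ from \eqref{eq:generator}. Writing the increment of $\eta(k)$ under the elementary move $i\rightarrow j$ as $a=\un_{j=k}-\un_{i=k}$, the transition part contributes $\frac1K\sum_{i,j}\eta(i)(\un_{j=k}-\un_{i=k})=\frac{N}{K}-\eta(k)$, while the interaction part vanishes because $\sum_{i,j}\eta(i)\eta(j)(\un_{j=k}-\un_{i=k})=0$ by symmetry in $i,j$. Hence $\mathcal{L}\eta(k)=\frac{N}{K}-\eta(k)$, so that $M_k'(t)=\frac{N}{K}-M_k(t)$ and $M_k(t)=\frac{N}{K}+\bigl(\E_\mu[\eta_0(k)]-\frac{N}{K}\bigr)e^{-t}$. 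This relaxation is the source of the rate $e^{-t}$, and the product $M_k(t)M_l(t)$ will then produce the rate $e^{-2t}$.

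The central computation is $\mathcal{L}(\eta(k)\eta(l))$ for $k\neq l$. Writing $a,b$ for the increments of $\eta(k),\eta(l)$, the increment of the product is $a\eta(l)+b\eta(k)+ab$; the first two pieces reproduce $\eta(l)\mathcal{L}\eta(k)+\eta(k)\mathcal{L}\eta(l)$, and the crucial point is that, since $k\neq l$, the cross term collapses to $ab=-(\un_{j=k}\un_{i=l}+\un_{i=k}\un_{j=l})$, which only selects the two moves $l\rightarrow k$ and $k\rightarrow l$. This is exactly what closes the system: no term in $\eta(k)^2$ or $\eta(l)^2$ survives, and one obtains
\[
\mathcal{L}(\eta(k)\eta(l))=\frac{N-1}{K}\bigl(\eta(k)+\eta(l)\bigr)-\frac{2(N-1+p)}{N-1}\,\eta(k)\eta(l).
\]
Taking expectations gives $M_{kl}'(t)=\frac{N-1}{K}\bigl(M_k(t)+M_l(t)\bigr)-\gamma\,M_{kl}(t)$ with $\gamma=\frac{2(N-1+p)}{N-1}=\frac{2K(N-1+p)}{K(N-1)}$, which is precisely the exponential rate in the first term of the statement.

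Finally I would integrate this scalar linear equation using the explicit $M_k,M_l$ as inhomogeneous data: the homogeneous part gives $e^{-\gamma t}$, the constant forcing $\frac{N-1}{K}\cdot\frac{2N}{K}$ gives the stationary value $\frac{N(N-1)^2}{K^2(N-1+p)}$, and the $e^{-t}$ forcing gives an $e^{-t}$ contribution whose amplitude carries the factor $\frac1{\gamma-1}=\frac{N-1}{N-1+2p}$, hence the denominator $K(N-1+2p)$ in the statement. Subtracting $M_k(t)M_l(t)$ then collects the four contributions at rates $e^{-\gamma t}$, $e^{-t}$, $e^{-2t}$ and the constant. I expect the main obstacle to be purely bookkeeping: one must track the initial-data-dependent amplitudes with care and verify that the constant term collapses to $\frac{-N^2(p+1)+N}{K^2(N-1+p)}$, namely the stationary covariance obtained as $t\to\infty$. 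The conceptual crux, by contrast, is the closure observation above, which is special both to $k\neq l$ and to the uniform (complete-graph) transition rates that keep $\mathcal{L}\eta(k)$ and $\mathcal{L}(\eta(k)\eta(l))$ within the span of the first and mixed moments.
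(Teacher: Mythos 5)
Your proposal is correct and is essentially the paper's own proof: the paper applies the generator to $\varphi_k:\eta\mapsto\eta(k)$ and $\psi_{k,l}:\eta\mapsto\eta(k)\eta(l)$, obtains exactly your two identities $\mathcal{L}\varphi_k=\frac{N}{K}-\varphi_k$ and $\mathcal{L}\psi_{k,l}=\frac{N-1}{K}\left(\eta(k)+\eta(l)\right)-\frac{2(N-1+p)}{N-1}\eta(k)\eta(l)$, and then closes the argument by Kolmogorov's equation and integration of the resulting linear system, just as you do.

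One warning about the bookkeeping you postponed: carried out faithfully, it will \emph{not} reproduce the displayed formula, and the discrepancy is in the statement, not in your argument. Your solution $M_k(t)=\frac{N}{K}+\left(\E_\mu[\eta_0(k)]-\frac{N}{K}\right)e^{-t}$ is the correct one, whereas the paper's proof writes $\E_\mu[\eta_t(k)]=\E_\mu[\eta_0(k)]e^{-t}+\frac{N}{K}$, which fails the initial condition; this slip (and an apparent sign typo) propagates into the theorem as stated. With $\gamma=\frac{2(N-1+p)}{N-1}$, the correct integration gives amplitudes in terms of the \emph{centered} initial data: the $e^{-2t}$ term is $-\left(\E_\mu[\eta_0(k)]-\frac{N}{K}\right)\left(\E_\mu[\eta_0(l)]-\frac{N}{K}\right)$, the $e^{-t}$ term is $\frac{1-N-2pN}{K(N-1+2p)}\left(\E_\mu[\eta_0(k)]+\E_\mu[\eta_0(l)]-\frac{2N}{K}\right)$ (note the sign of $2pN$, opposite to the display), and the $e^{-\gamma t}$ amplitude is not simply $\E_\mu[\eta_0(k)\eta_0(l)]$ but is fixed by matching the covariance at $t=0$; only the constant term $\frac{-N^{2}(p+1)+N}{K^{2}(N-1+p)}$ comes out exactly as displayed. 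A quick consistency check shows the displayed formula cannot be exact: for a deterministic initial configuration the covariance vanishes at $t=0$, while the right-hand side of the display does not (e.g.\ $K=N=2$, $p=1$, $\eta_0=(2,0)$ gives $1/4$). So trust your version of the computation; your approach and intermediate identities coincide with the paper's, and are in fact slightly more careful.
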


\begin{Rq}[Limit $t\rightarrow + \infty$]
By the previous theorem, we find for any probability measure $\mu$
\begin{align*}
\lim_{t\rightarrow +\infty} \cov_{\mu}(\eta_{t}(k),\eta_{t}(l)) 
&= \dfrac{-N^{2}(p+1)+N}{K^{2}(N-1+p)}= \cov(\eta(k),\eta(l)),
\end{align*}
where $\eta$ is distributed according to the invariant distribution; it exists since the state space is finite, see the next section.
\end{Rq}

\begin{Rq}[Limit $N\rightarrow + \infty$]

If  $\cov_{\mu}\left(\eta_{0}(k),\eta_{0}(l)\right)\neq 0$ then for all $k,l \in \lbrace 1, \dots , K\rbrace$, $k\neq l$ and any probability measure $\mu$, we have
\begin{align*}
 \cov_{\mu}\left(\dfrac{\eta_{t}(k)}{N},\dfrac{\eta_{t}(l)}{N}\right)
&\sim_{N}  e^{-2t } \cov_{\mu}\left(\dfrac{\eta_{0}(k)}{N},\dfrac{\eta_{0}(l)}{N}\right),
\end{align*}
where $u_{N}\sim_{N} v_{N}$ iff $\lim_{N\rightarrow +\infty} \dfrac{u_{N}}{v_{N}} = 1$.
\end{Rq}

\begin{proof}[Proof of Theorem \ref{th:cor-GC}]
For $k,l \in \{1,..,K\}$, let $\psi_{k,l}$ be the function $\eta\mapsto \eta(k)\eta(l)$. Applying the generator \eqref{eq:generator} to $\psi_{k,l}$ we obtain 
\begin{align*}
\mathcal{L}\psi_{k,l}(\eta)&= -\dfrac{2K(N-1+p)}{K(N-1)} \eta(k)\eta(l) + \dfrac{N-1}{K}(\eta(k)+\eta(l)).
\end{align*}
So, for all $t\geq0$,
$$\mathcal{L}\psi_{k,l}(\eta_{t}) = -\dfrac{2K(N-1+p)}{K(N-1)} \eta_{t}(k)\eta_{t}(l) + \dfrac{N-1}{K}(\eta_{t}(k)+\eta_{t}(l)).
$$
Using Kolmogorov's equation, we have
\begin{equation}
\label{eq:EDO}
\partial_{t} \E_{\mu}(\eta_{t}(k)\eta_{t}(l)) = -\dfrac{2K(N-1+p)}{K(N-1)} \E_{\mu}(\eta_{t}(k)\eta_{t}(l)) + \dfrac{N-1}{K}(\E_{\mu}(\eta_{t}(k))+\E_{\mu}(\eta_{t}(l))).
\end{equation}
Now if $\varphi_{k}(\eta) =\eta(k)$ then $\mathcal{L}\varphi_{k}(\eta) = \dfrac{N}{K} - \eta(k)$. We deduce that, for every $t\geq 0$,
$$
\partial_{t} \E_{\mu}(\eta_{t}(k)) = \dfrac{N}{K} - \E_{\mu}(\eta_{t}(k)) \ \text{ and } \ \E_{\mu}(\eta_{t}(k)) = \E_{\mu}(\eta_{0}(k))e^{-t} + \dfrac{N}{K}.
$$
Solving equation \eqref{eq:EDO} ends the proof.
\end{proof}

\subsection{Properties of the invariant measure}
\label{subsec:invariant} 
As $(\eta_t)_{t\geq 0}$ is an irreducible Markov chain on a finite state space, it is straightforward that it admits a unique invariant measure. In fact, this invariant distribution is reversible and we know its expression. 

\begin{theo}[Invariant distribution]
The process $(\eta_{t})_{t\geq 0}$ admits a unique invariant and reversible measure $\nu_N$, which is defined, for every $\eta \in E$, by
\begin{equation*}
\nu_{N}(\{\eta\})= Z^{-1} \prod_{i=1}^{K} \prod_{j=0}^{\eta(i)-1} \dfrac{N-1+Kpj}{j+1},
\end{equation*}
where $Z$ is a normalizing constant.
\end{theo}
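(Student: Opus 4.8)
The plan is to prove the stronger statement that $\nu_N$ is \emph{reversible} for $(\eta_t)_{t\geq 0}$, from which invariance is automatic; uniqueness needs no extra work since, as already observed, $(\eta_t)_{t\geq 0}$ is an irreducible Markov chain on the finite set $E$. Because every transition of the chain has the form $\eta \rightarrow T_{i\rightarrow j}\eta$, it suffices to check the detailed balance equation on each such pair. Reading the rate off \eqref{eq:generator}, the jump $\eta \rightarrow T_{i\rightarrow j}\eta$ occurs at rate $\eta(i)\big(\tfrac1K + p\tfrac{\eta(j)}{N-1}\big)$, whereas the reverse jump $T_{i\rightarrow j}\eta \rightarrow \eta$ (which is $T_{j\rightarrow i}$ applied to $T_{i\rightarrow j}\eta$) occurs at rate $(\eta(j)+1)\big(\tfrac1K + p\tfrac{\eta(i)-1}{N-1}\big)$. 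Thus I would reduce the theorem to verifying, for every $\eta\in E$ and every pair $i\neq j$ with $\eta(i)\geq 1$, the identity
$$
\nu_N(\eta)\,\eta(i)\Big(\tfrac1K + p\tfrac{\eta(j)}{N-1}\Big) = \nu_N(T_{i\rightarrow j}\eta)\,(\eta(j)+1)\Big(\tfrac1K + p\tfrac{\eta(i)-1}{N-1}\Big).
$$

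The next step is to compute the measure ratio. Writing $w(n) = \prod_{m=0}^{n-1}\frac{N-1+Kpm}{m+1}$, so that $\nu_N(\{\eta\}) = Z^{-1}\prod_{i=1}^K w(\eta(i))$ and $w(n+1)/w(n) = (N-1+Kpn)/(n+1)$, I note that passing from $\eta$ to $T_{i\rightarrow j}\eta$ changes only the factors indexed by $i$ and $j$. This gives the telescoping ratio
$$
\frac{\nu_N(T_{i\rightarrow j}\eta)}{\nu_N(\eta)} = \frac{w(\eta(i)-1)}{w(\eta(i))}\cdot\frac{w(\eta(j)+1)}{w(\eta(j))} = \frac{\eta(i)}{N-1+Kp(\eta(i)-1)}\cdot\frac{N-1+Kp\,\eta(j)}{\eta(j)+1}.
$$

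Substituting this into the right-hand side of the detailed balance identity and using $\tfrac1K + p\tfrac{\eta(i)-1}{N-1} = \frac{N-1+Kp(\eta(i)-1)}{K(N-1)}$, the factor $(\eta(j)+1)$ cancels and the denominator $N-1+Kp(\eta(i)-1)$ cancels, leaving $\frac{\eta(i)(N-1+Kp\,\eta(j))}{K(N-1)}$. This is exactly the left-hand side $\eta(i)\big(\tfrac1K+p\tfrac{\eta(j)}{N-1}\big)$ written over the common denominator $K(N-1)$. Hence detailed balance holds, $\nu_N$ is reversible and therefore invariant, and $Z$ is the normalization making it a probability measure.

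The computation is elementary; the only points requiring care are identifying the rate of the \emph{reverse} jump correctly, since the two endpoints $i$ and $j$ play asymmetric roles in the rate $\eta(i)\big(\tfrac1K + p\tfrac{\eta(j)}{N-1}\big)$, and handling the telescoping of the product defining $\nu_N$ so that the two altered factors combine cleanly with the jump rates. The genuinely useful idea is simply to conjecture reversibility, which reduces the problem to a local check on single-particle moves rather than a direct solution of $\nu_N\mathcal{L}=0$.
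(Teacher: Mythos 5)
Your proof is correct and takes essentially the same route as the paper: both reduce the theorem to the detailed balance equation $\nu_N(\{\eta\})\,C(\eta,T_{i\rightarrow j}\eta)=\nu_N(\{T_{i\rightarrow j}\eta\})\,C(T_{i\rightarrow j}\eta,\eta)$ on single-particle moves, with uniqueness following from irreducibility on the finite state space $E$. The only cosmetic difference is that the paper \emph{derives} the formula by positing a product-form ansatz $\nu(\{\eta\})=Z^{-1}\prod_{i=1}^K l(\eta(i))$ and solving for $l$, whereas you \emph{verify} the stated formula directly via the ratio $w(n+1)/w(n)$; the underlying local computation is identical.
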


This result was already proved in \cite[Section 4]{GRV10} and \cite[Theorem 2.1]{GRV11} but we give it for sake of completeness. 

\begin{proof}
A measure $\nu$ is reversible if and only if it satisfies the following balance equation
\begin{equation}
\label{eq:balance}
\nu(\{\eta\}) C(\eta,\xi) = \nu(\{\xi\}) C(\xi,\eta)\\
\end{equation}
where $ \xi = T_{i \rightarrow j} \eta$ and $C(\eta,\xi) = \mathcal{L} \un_{\xi} (\eta) = \eta(i) (K^{-1}+ p\eta(j)(N-1)^{-1})$.

Due to the geometry of the complete graph, it is natural to consider that $\nu$ has the following form 
$$
\nu(\{\eta\})= \frac{1}{Z}\prod_{i=1}^{K} l(\eta(i)),
$$ 
where $l: \{0, \dots, N\} \rightarrow [0,1]$ is a function and $Z$ is a normalizing constant. From \eqref{eq:balance}, we have
\begin{equation*}
l(\eta(i))l(\eta(j))\eta(i) (N-1+ Kp\eta(j)) = l(\eta(i)-1)l(\eta(j)+1)(\eta(j)+1) (N-1+ Kp(\eta(i)-1)),
\end{equation*}
for all $\eta \in E$ and $i,j \in \{1, \dots K \}$. Hence, 
$$
\frac{l(n)}{l(n-1)}\frac{n}{N-1 +Kp(n-1)} = \frac{l(m)}{l(m-1)}\frac{m}{N-1+Kp(m-1)} = u,
$$
for every $m,n\in\{ 1,\dots,N\}$ and some $u \in \R$. Finally,

\begin{eqnarray*}
\nu(\{\eta\})= \prod_{i=1}^{K} \left( u^{\eta(i)} \prod_{j=0}^{\eta(i)-1} \dfrac{N-1+Kpi}{i+1}  l(0)\right) =  l(0)^K u^{N} \prod_{i=1}^{K}  \prod_{j=0}^{\eta(i)-1} \dfrac{N-1+Kpj}{j+1},
\end{eqnarray*}
and $Z= 1/(l(0)^K u^{N})$.
\end{proof}

In particular, we have directly

\begin{coro}[Invariant distribution when $p=1/K$]
If $p=1/K$ then the process $(\eta_{t})_{t\geq 0}$ admits a unique invariant and reversible measure $\nu_N$, which is defined, for every $\eta \in E$, by
\begin{equation*}
\nu_{N}(\{\eta\})= Z^{-1} \prod_{i=1}^{K} \binom {N-2+\eta(i)} {N-2},
\end{equation*}
where $Z$ is a normalizing constant given by
$$
Z= \binom {(K+1)N-K-1} {KN-K-1}.
$$
\end{coro}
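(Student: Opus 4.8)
The plan is to specialize the previous theorem to $p=1/K$ and then to evaluate the normalizing constant explicitly. First I would substitute $p=1/K$ into the general formula
$$
\nu_N(\{\eta\}) = Z^{-1} \prod_{i=1}^{K} \prod_{j=0}^{\eta(i)-1} \dfrac{N-1+Kpj}{j+1}.
$$
Since $Kp=1$, each inner product collapses to $\prod_{j=0}^{\eta(i)-1} \frac{N-1+j}{j+1}$, whose numerator is the rising factorial $(N-1)N\cdots(N+\eta(i)-2) = (N+\eta(i)-2)!/(N-2)!$ and whose denominator is $\eta(i)!$. This ratio is exactly $\binom{N-2+\eta(i)}{N-2}$, which yields the announced product form of $\nu_N$. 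Uniqueness and reversibility require no extra argument, as they are inherited directly from the previous theorem.

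It then remains to identify the constant
$$
Z = \sum_{\eta \in E} \prod_{i=1}^{K} \binom{N-2+\eta(i)}{N-2},
$$
where the sum runs over all $\eta$ with $\eta(1)+\dots+\eta(K)=N$. I would compute this via generating functions. The negative binomial expansion gives
$$
\sum_{n\geq 0} \binom{N-2+n}{N-2} x^n = (1-x)^{-(N-1)},
$$
so taking the $K$-fold product (which accounts for the $K$ sites) and extracting the coefficient of $x^N$ (which encodes the constraint $\sum_i \eta(i)=N$) yields $Z = [x^N](1-x)^{-K(N-1)} = \binom{(K+1)N-K-1}{N}$. The symmetry $\binom{m}{j}=\binom{m}{m-j}$ applied with $m=(K+1)N-K-1$ and $m-N=KN-K-1$ then rewrites this as $\binom{(K+1)N-K-1}{KN-K-1}$, as claimed.

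The whole argument is essentially routine once the negative binomial identity is in hand; the only points requiring a little care are the bookkeeping in the rising-factorial simplification and the final binomial symmetry. I do not expect any serious obstacle, since the structure of the invariant measure is already supplied by the preceding theorem and the remaining work is a standard Vandermonde-type convolution.
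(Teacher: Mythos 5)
Your proposal is correct and follows essentially the same route as the paper: the paper states this corollary as an immediate consequence of the preceding theorem (substituting $p=1/K$ so that $Kp=1$ collapses each inner product to $\binom{N-2+\eta(i)}{N-2}$), and the evaluation of $Z$ via the negative binomial expansion of $(1-x)^{-(N-1)}$ is exactly the Vandermonde-type convolution $\binom{r-1}{n-1}=\sum_{r_1+\dots+r_p=r}\prod_j\binom{r_j-1}{n_j-1}$ that the paper itself invokes (and justifies by the same generating-function argument) in the proof of the subsequent corollary on marginal laws. Your computation checks out, including the final symmetry $\binom{(K+1)N-K-1}{N}=\binom{(K+1)N-K-1}{KN-K-1}$.
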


\begin{coro}[Marginal laws when $p=1/K$] 
If $p=1/K$ then for all $i\in \left\lbrace 1, \dots ,K\right\rbrace$ we have  
$$
\mathbb{P}_{\nu_{N}}(\eta(i)=x) = \dfrac{1}{Z} \binom {N-2+x} {N-2} \binom {KN-K-x} {(K-1)N-K},
$$
\end{coro}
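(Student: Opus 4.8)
The plan is to obtain the marginal law by summing the explicit product formula for $\nu_N$ from the previous corollary over all configurations with a prescribed value at site $i$. Since the weight $\nu_N(\{\eta\}) = Z^{-1}\prod_{j=1}^K \binom{N-2+\eta(j)}{N-2}$ factorizes over sites, fixing $\eta(i)=x$ pulls out the factor $\binom{N-2+x}{N-2}$ and leaves a constrained sum over the occupation numbers of the remaining $K-1$ sites, which must sum to $N-x$. Thus
$$
\mathbb{P}_{\nu_N}(\eta(i)=x) = \frac{1}{Z}\binom{N-2+x}{N-2}\sum_{\substack{(n_j)_{j\neq i}\in\N^{K-1}\\ \sum_{j\neq i} n_j = N-x}} \prod_{j\neq i}\binom{N-2+n_j}{N-2},
$$
so that everything reduces to evaluating this last convolution of binomial coefficients.

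The key step is the identity, valid for integers $m\geq 1$ and $s\geq 0$,
$$
\sum_{\substack{(n_1,\dots,n_m)\in\N^m\\ n_1+\cdots+n_m=s}} \prod_{l=1}^m \binom{N-2+n_l}{N-2} = \binom{m(N-1)+s-1}{s},
$$
which I would prove by generating functions: since $\sum_{n\geq 0}\binom{N-2+n}{N-2}z^n = (1-z)^{-(N-1)}$ (the negative binomial series), the left-hand side is the coefficient of $z^s$ in $(1-z)^{-m(N-1)}$, namely $\binom{m(N-1)+s-1}{s}$. Alternatively this is a Vandermonde-type convolution proved by induction on $m$.

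Applying this with $m=K-1$ and $s=N-x$ yields the value $\binom{(K-1)(N-1)+N-x-1}{N-x}$, and a short index computation gives $(K-1)(N-1)+N-x-1 = KN-K-x$, so the sum equals $\binom{KN-K-x}{N-x}$. Using the symmetry $\binom{a}{b}=\binom{a}{a-b}$ with $a=KN-K-x$ and $a-b = (KN-K-x)-(N-x) = (K-1)N-K$ rewrites this as $\binom{KN-K-x}{(K-1)N-K}$. Substituting back into the factorized expression above gives exactly the claimed formula, with the same normalizing constant $Z = \binom{(K+1)N-K-1}{KN-K-1}$ as in the previous corollary.

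I do not expect a genuine obstacle here: the only content beyond bookkeeping is the generating-function identity, which is standard, together with the one-line algebraic simplification $(K-1)(N-1)+N-x-1 = KN-K-x$. The main care needed is to keep track of the constraint $\sum_{j\neq i}\eta(j)=N-x$ when peeling off the $i$-th coordinate and to apply the binomial symmetry correctly so that the lower index matches $(K-1)N-K$ rather than $N-x$.
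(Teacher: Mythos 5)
Your proposal is correct and follows essentially the same route as the paper: marginalize the product-form invariant measure by fixing $\eta(i)=x$, evaluate the constrained sum over the remaining $K-1$ sites via the negative binomial generating function (the paper phrases this as a Vandermonde-type convolution obtained from the power series of $\left(z/(1-z)\right)^{n}$, which is the same identity), and then simplify the binomial indices. The index bookkeeping you carry out matches the paper's final expression $\binom{KN-K-x}{(K-1)N-K}$ exactly.
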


\begin{proof}
Firstly let us recall the Vandermonde binomial convolution type formula: let $n,n_1, \dots, n_p$ be some non-negative integers satisfying $\sum_{i=1}^p n_i = n$,
we have
\begin{equation*}
\binom{r-1}{n-1}= \sum_{r_1 + \dots + r_p= r} \ \prod_{j=1}^p \binom{r_j-1}{n_j-1}.
\end{equation*}
The proof is based on the power series decomposition of $z \mapsto \left(z/(1-z)\right)^{n}=  \prod_{i=1}^p \left(z/(1-z)\right)^{n_{i}}.$
Using this formula, we find
\begin{align*}
\mathbb{P}_{\nu_{N}}(\eta(i)=x)
&= \sum_{\overline{x}\in E_{1}} \mathbb{P}_{\nu_{N}}(\eta=(x_{1}, \dots ,x_{i-1},x,x_{i+1} \dots , x_{K}))\\
&= \dfrac{1}{Z}\binom {N-2+x} {N-2} \sum_{\overline{x}\in E_{1}} \prod_{l=1}^{i-1}\prod_{l=i+1}^{K} \binom {N-2+x_{l}} {N-2}\\
&=\dfrac{1}{Z} \binom {N-2+x} {N-2} \binom {(K-1)(N-1)+N-x-1} {(K-1)(N-1)-1},\\
\end{align*}
where $E_{1} = \left\lbrace \overline{x}=(x_{1}, \dots ,x_{i-1},x_{i+1} \dots , x_{K}) | x_{1}+ \dots +x_{i-1}+x_{i+1} \dots + x_{K} = N-x \right\rbrace$.
\end{proof}

We are now able to express the particle correlations under this invariant measure.

\begin{theo}[Correlation estimates]
\label{theo: cor}
For all $i\neq j \in \lbrace 1, \dots, K\rbrace$, we have
\begin{equation*}
\vert \cov_{\nu_{N}}(\eta(i)/N,\eta(j)/N)\vert  \sim_{N} \dfrac{p+1}{K^{2}N},
\end{equation*}
 
\end{theo}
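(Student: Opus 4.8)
The plan is to avoid any fresh computation with the explicit invariant measure and instead to read off the exact covariance under $\nu_N$ from the time-dependent formula of Theorem~\ref{th:cor-GC}. Since $(\eta_t)_{t\geq 0}$ is an irreducible Markov chain on the finite state space $E$, it converges in distribution to $\nu_N$; because $\eta\mapsto \eta(i)\eta(j)$ and $\eta\mapsto\eta(i)$ are bounded, the quantity $\cov_{\mu}(\eta_t(i),\eta_t(j))$ converges, as $t\to\infty$, to $\cov_{\nu_N}(\eta(i),\eta(j))$ for any initial law $\mu$. First I would therefore let $t\to\infty$ in Theorem~\ref{th:cor-GC}: the three transient terms carry the factors $e^{-\frac{2K(N-1+p)}{K(N-1)}t}$, $e^{-t}$ and $e^{-2t}$, all of which vanish, leaving the $\mu$-independent constant
$$
\cov_{\nu_N}(\eta(i),\eta(j)) = \frac{-N^{2}(p+1)+N}{K^{2}(N-1+p)},
$$
exactly as recorded in the Remark on the limit $t\to+\infty$ following Theorem~\ref{th:cor-GC}.

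Next I would divide by $N^{2}$ to pass to the normalised variables, obtaining
$$
\cov_{\nu_N}\!\left(\frac{\eta(i)}{N},\frac{\eta(j)}{N}\right) = \frac{-N^{2}(p+1)+N}{N^{2}K^{2}(N-1+p)} = \frac{N\bigl(N(p+1)-1\bigr)}{N^{2}K^{2}(N-1+p)}.
$$
For $N$ large the bracket is positive, so taking absolute values and dividing by the candidate equivalent $(p+1)/(K^{2}N)$ produces the ratio $\dfrac{N(p+1)-1}{(p+1)(N-1+p)}$, whose limit is $1$ as $N\to\infty$. By the definition of $\sim_N$ this is precisely the assertion. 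Heuristically, the numerator behaves like $N^{2}(p+1)$ and the denominator like $K^{2}N^{3}$, so the leading order is $-(p+1)/(K^{2}N)$; the negative sign, which reflects the competition for the fixed total mass $N$ between distinct sites, disappears under the absolute value.

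There is essentially no difficult step here: once the exact covariance is in hand, the result is an elementary asymptotic expansion. The only point deserving a word of justification is the interchange of the limit $t\to\infty$ with the covariance, that is, the identification of $\cov_{\nu_N}$ with the limiting value of $\cov_{\mu}(\eta_t(i),\eta_t(j))$; this is immediate from the ergodicity of a finite irreducible chain together with the boundedness of the functions involved, so no dominated-convergence subtlety arises. An alternative, self-contained route would be to compute $\E_{\nu_N}[\eta(i)\eta(j)]$ and $\E_{\nu_N}[\eta(i)]$ directly from the product form of $\nu_N$ and the Vandermonde-type identity already used for the marginal laws, but this is strictly more laborious, yields the same constant, and I would not pursue it.
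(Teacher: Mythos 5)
Your proof is correct, but it takes a genuinely different route from the paper's. You obtain the stationary covariance by letting $t\to\infty$ in Theorem~\ref{th:cor-GC}, justified by ergodicity of the finite irreducible chain (the paper itself asserts irreducibility, and on a finite state space convergence in law plus boundedness of $\eta\mapsto\eta(i)\eta(j)$ makes the interchange of limits immediate); this is precisely the content of the remark the paper places right after Theorem~\ref{th:cor-GC}, and your subsequent expansion is sound, since $\vert -N^{2}(p+1)+N\vert /\bigl(N^{2}K^{2}(N-1+p)\bigr)$ divided by $(p+1)/(K^{2}N)$ equals $\bigl(N(p+1)-1\bigr)/\bigl((p+1)(N-1+p)\bigr)\to 1$. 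The paper's own proof of Theorem~\ref{theo: cor} never invokes Theorem~\ref{th:cor-GC}: it first uses exchangeability — under $\nu_N$ the coordinates $\eta(1),\dots,\eta(K)$ are identically distributed and sum to $N$ — to write $\cov_{\nu_N}\bigl(\eta(i)/N,\eta(j)/N\bigr)=-\var_{\nu_N}\bigl(\eta(i)/N\bigr)/(K-1)$, and then computes $\int \eta(i)^{2}\,d\nu_N$ from the stationarity identity $\int \mathcal{L}(\eta(i)^{2})\,d\nu_{N}=0$ together with $\int \eta(i)\,d\nu_{N}=N/K$. Both routes yield the same exact value $\cov_{\nu_N}(\eta(i),\eta(j))=-N\bigl(N(p+1)-1\bigr)/\bigl(K^{2}(N-1+p)\bigr)$. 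What your approach buys is economy: no fresh generator computation, only a trivial limiting argument. What the paper's approach buys is a purely stationary, self-contained calculation that additionally produces the exact variance $\var_{\nu_{N}}(\eta(i))=N(K-1)(Np+N-1)/\bigl(K^{2}(N-1+p)\bigr)$, which is reused verbatim in the corollary that follows (the Cauchy--Schwarz bound giving $\E_{\nu_{N}}\bigl[d_{\textrm{TV}}(m(\eta),\pi_{K})\bigr]\leq\sqrt{K(p+1)/N}$), whereas your variant would recover that variance only indirectly through the same exchangeability relation.
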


\begin{proof}
Let $\eta$ be a random variable with law $\nu_N$. As $\eta(1), \dots, \eta(K)$ are identically distributed and $\sum_{i=1}^{K} \eta(i) = N$ we have
$$
\cov_{\nu_N}(\eta(i)/N,\eta(j)/N) = -\dfrac{\var_{\nu_N} (\eta(i)/N)}{K-1}.
$$
Using the results of Section \ref{sect:spectre}, we have
\begin{align*}
\mathcal{L}(\eta(i)^{2}) 
&= \eta(i)^{2}\left[-2-\dfrac{2p}{N-1}\right] + \eta(i) \left[\dfrac{2N}{K} + \dfrac{2pN}{N-1} + \dfrac{K-2}{K}\right] + \dfrac{N}{K}.
\end{align*}
Using the fact that $\int \mathcal{L}(\eta(i)^{2}) d\nu_{N} = 0$ and $\int \eta(i) d\nu_{N} = \frac{N}{K}$, we deduce that
\begin{align*}
\int \eta(i)^{2} d\nu_{N} &= \dfrac{N \left[(2N+K-2)(N-1) + 2KNp + K(N-1)\right]} {2K^{2}(N-1+p)}.
\end{align*} 

Finally,
\begin{align*}
\var_{\nu_{N}}(\eta(i))
&= \int \eta(i)^{2} d\nu_{N} - \left(\int \eta(i) d\nu_{N}\right)^{2}=\dfrac{N(K-1)(Np+N-1)}{K^{2}(N-1+p)}, 
\end{align*}         
and thus, for $i\neq j$,  
\begin{align*}
\vert \cov_{\nu_{N}}(\eta(i)/N,\eta(j)/N)\vert &\sim_{N} \dfrac{p+1}{K^{2}N}.
\end{align*}               

\end{proof}

\begin{Rq}[Proof through coalescence methods]
Maybe we can use properties of Kingman's coalescent type process (which is a dual process) to recover some of our results (as for instance the previous correlation estimates). Indeed, after an interacting event, all individuals evolve independtly and it is enough to look when the first
mutation happens (backwards in time) on one of the genealogical tree branches. Nevertheless, we prefer to use another approach based on Markovian techniques. 
\end{Rq}

\begin{Rq}[Number of sites]
Theorem \ref{theo: cor} gives the rate of the decay of correlations with respect to the number of particles, but we also have a rate with respect to the number of sites $K$. For instance when $p=1/K$ and if $\eta$ is distributed under the invariant measure, then
\begin{align*}
\vert \cov_{\nu_{N}}(\eta(i)/N,\eta(j)/N)\vert &\sim_{K} \dfrac{1}{K(K-1)N}.
\end{align*}
\end{Rq}

The previous theorem shows that the occupation numbers of two distinct sites become non-correlated when the number of particles increases. In fact, Theorem \ref{theo: cor} leads to a propagation of chaos:

\begin{coro}[Convergence to the QSD]
We have
$$
\E_{\nu_{N}}\left[ d_{\textrm{TV}}(m(\eta),\pi_{K})\right] \leq \sqrt{\dfrac{K(p+1)}{N}},
$$
where $\pi_{K}$ is the uniform measure on $\{1, \dots, K\}$.
\end{coro}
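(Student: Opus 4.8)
The plan is to reduce everything to the second-moment computation already carried out in the proof of Theorem \ref{theo: cor}. By the definition of the total variation distance and of the empirical measure,
$$
d_{\textrm{TV}}(m(\eta),\pi_{K}) = \frac{1}{2}\sum_{k=1}^{K}\left|\frac{\eta(k)}{N}-\frac{1}{K}\right|.
$$
The key first observation is that $1/K$ is \emph{exactly} the mean of $\eta(k)/N$ under $\nu_{N}$: since the coordinates $\eta(k)$ are identically distributed under the (exchangeable) invariant measure and $\sum_{k}\eta(k)=N$, one has $\E_{\nu_{N}}[\eta(k)]=N/K$, a fact already used in the proof of Theorem \ref{theo: cor}. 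Hence each summand above is a \emph{centred} deviation, and its expectation can be controlled by a standard deviation.

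Next I would take expectations and pass to an $L^{2}$ bound. Applying the Cauchy--Schwarz inequality to the sum over the $K$ sites (pulling out a factor $\sqrt{K}$) and then Jensen's inequality to exchange expectation and square root gives
$$
\E_{\nu_{N}}\!\left[d_{\textrm{TV}}(m(\eta),\pi_{K})\right] \leq \frac{\sqrt{K}}{2}\sqrt{\sum_{k=1}^{K}\var_{\nu_{N}}\!\left(\frac{\eta(k)}{N}\right)} = \frac{\sqrt{K}}{2}\sqrt{K\,\var_{\nu_{N}}\!\left(\frac{\eta(1)}{N}\right)},
$$
the last equality again by exchangeability. It remains only to insert the variance computed in Theorem \ref{theo: cor}, namely $\var_{\nu_{N}}(\eta(1)) = N(K-1)(Np+N-1)/(K^{2}(N-1+p))$, so that $\var_{\nu_{N}}(\eta(1)/N) = (K-1)(Np+N-1)/(NK^{2}(N-1+p))$.

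Finally I would conclude with an elementary estimate: one checks that $(Np+N-1)/(N-1+p)\leq p+1$ (equivalently $0\leq p^{2}$), whence $\var_{\nu_{N}}(\eta(1)/N)\leq (K-1)(p+1)/(NK^{2})$. Substituting this into the display above and bounding $K-1\leq K$ yields
$$
\E_{\nu_{N}}\!\left[d_{\textrm{TV}}(m(\eta),\pi_{K})\right] \leq \frac{1}{2}\sqrt{\frac{(K-1)(p+1)}{N}} \leq \sqrt{\frac{K(p+1)}{N}},
$$
which is the claimed bound (with a comfortable margin). There is no genuine obstacle here; the only points requiring care are the correct application of Cauchy--Schwarz across the $K$ sites and the one-line bound on the variance ratio, both routine. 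The substance of the result lives entirely in the variance formula of Theorem \ref{theo: cor}.
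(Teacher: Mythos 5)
Your proof is correct and follows essentially the same route as the paper: both reduce the total variation distance to the variance $\var_{\nu_N}(\eta(k)/N)$ computed in Theorem \ref{theo: cor} via the centering $\E_{\nu_N}[\eta(k)/N]=1/K$ and an $L^2$ (Cauchy--Schwarz/Jensen) bound, including the same elementary estimate $(Np+N-1)/(N-1+p)\leq p+1$. The only cosmetic difference is that the paper applies Jensen to each site and then sums, whereas you apply Cauchy--Schwarz across the sum of $K$ sites first; both yield the identical bound $\frac{1}{2}\sqrt{(K-1)(p+1)/N}$.
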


\begin{proof} By the Cauchy-Schwarz inequality, we have
\begin{align*}
\E_{\nu_{N}}\left[\left| \dfrac{\eta(k)}{N} - \dfrac{1}{K}\right| \right]
&\leq \left( \E_{\nu_{N}} \left[\left| \dfrac{\eta(k)}{N} - \dfrac{1}{K}\right|^{2}\right]\right) ^{\frac{1}{2}} = \var_{\nu_{N}}\left(\dfrac{\eta(k)}{N}\right)^{1/2}\leq \sqrt{\dfrac{(K-1)(p+1)}{K^{2}N}}.
\end{align*}
Summing over $\{1, \dots, K\}$ ends the proof.
\end{proof}

The previous bound is better than the bound obtained in Theorem \ref{th:chaos} and its corollaries. This comes from the absence of bias term. Indeed, 
$$
\forall k \in F^*, \ \E_{\nu_{N}}[m(\eta)(k)] = \frac{1}{K} = \pi_K(k).
$$
The bad term in Theorem \ref{th:chaos} comes from, with the notations of its proof, the estimation of $|u_k(t)-v_k(t)|$ and Gronwall Lemma. 

\begin{Rq}[Parameters depending on $N$]
\label{rq:dependenceGC}
A nice application of explicit rates of convergence is to consider parameters depending on $N$. For instance, we can now consider that $p=p_N$ depends on $N$, this does not change neither the conditioned semi-goup nor the QSD but this changes the dynamics of our interacting-particle system. The last corollary gives that if $\lim_{N\rightarrow \infty} p_N/N =0$ then the empirical measure converges to the uniform measure.

\end{Rq}

\subsection{Long time behavior and spectral analysis of the generator}
\label{sect:spectre}

In this subsection, we point out the optimality of Theorem \ref{th:tl-general} in this special case. It gives
\begin{coro}[Wasserstein contraction]
\label{cor: WassGC}
For any processes $(\eta_{t})_{t>0}$ and $(\eta'_{t})_{t>0}$ generated by \eqref{eq:generator}, and
for any $t \geq 0$, we have
$$
\mathcal{W}_{d_1} (\mathcal{L}(\eta_t), \mathcal{L}(\eta'_t)) \leq e^{- t} \mathcal{W}_{d_1} (\mathcal{L}(\eta_0), \mathcal{L}(\eta'_0)).
$$
In particular, when $(\eta'_{0})$ follows the invariant distribution $\nu_{N}$ associated to \eqref{eq:generator}, we
get for every $t \geq 0$
$$
\mathcal{W}_{d_1} (\mathcal{L}(\eta_t), \nu_{N}) \leq e^{- t} \mathcal{W}_{d_1} (\mathcal{L}(\eta_0), \nu_{N}).
$$
\end{coro}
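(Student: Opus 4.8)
The plan is to recognize this corollary as nothing more than the specialization of Theorem \ref{th:tl-general} to the complete-graph parameters, so the entire task reduces to evaluating the constant $\rho$ in this case. First I would record that here the death rate is constant, $p_0(i)=p$ for every $i\in F^*$, so that $\sup(p_0)-\inf(p_0)=0$ and therefore $\rho=\lambda$.

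Next I would compute $\lambda$ explicitly. Since $Q_{i,j}=1/K$ for all distinct $i,j\in\{1,\dots,K\}$, for any fixed pair $i\neq i'$ one has $Q_{i,i'}=Q_{i',i}=1/K$, and for each of the $K-2$ remaining indices $j\notin\{i,i'\}$ one has $Q_{i,j}\wedge Q_{i',j}=1/K$. Hence
$$
Q_{i,i'}+Q_{i',i}+\sum_{j\neq i,i'}Q_{i,j}\wedge Q_{i',j}=\frac{1}{K}+\frac{1}{K}+(K-2)\frac{1}{K}=1,
$$
independently of the chosen pair. Taking the infimum over $i,i'$ therefore gives $\lambda=1$, and consequently $\rho=1$.

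With $\rho=1$ in hand, the first inequality is immediate from Theorem \ref{th:tl-general}, which yields $\mathcal{W}_{d_1}(\mathcal{L}(\eta_t),\mathcal{L}(\eta'_t))\leq e^{-\rho t}\mathcal{W}_{d_1}(\mathcal{L}(\eta_0),\mathcal{L}(\eta'_0))=e^{-t}\mathcal{W}_{d_1}(\mathcal{L}(\eta_0),\mathcal{L}(\eta'_0))$. For the second inequality I would invoke the fact that $\rho=1>0$, so the same theorem guarantees a unique invariant distribution $\nu_N$; choosing the second process to start (and hence remain) in $\nu_N$, that is $\mathcal{L}(\eta'_0)=\nu_N$ and thus $\mathcal{L}(\eta'_t)=\nu_N$ for all $t\geq0$, the contraction specializes to $\mathcal{W}_{d_1}(\mathcal{L}(\eta_t),\nu_N)\leq e^{-t}\mathcal{W}_{d_1}(\mathcal{L}(\eta_0),\nu_N)$.

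There is essentially no obstacle in this argument: the only step requiring any verification is the evaluation of $\lambda$, and even that is trivialized by the complete symmetry of the graph, where every off-diagonal rate equals $1/K$ so that the infimum over pairs is attained by every pair and equals exactly $1$. The point of the corollary is qualitative rather than technical, namely that in the constant-killing case the rate $\rho$ produced by the coupling of Theorem \ref{th:tl-general} is sharp, as confirmed by the spectral analysis carried out in Section \ref{sect:spectre}.
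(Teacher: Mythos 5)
Your proposal is correct and follows exactly the route the paper intends: the corollary is stated there as an immediate specialization of Theorem \ref{th:tl-general}, with the constant killing rate giving $\sup(p_0)-\inf(p_0)=0$ and the complete-graph symmetry giving $\lambda=\frac{1}{K}+\frac{1}{K}+(K-2)\frac{1}{K}=1$, hence $\rho=1$. Your explicit verification of this computation and the stationarity argument for the second inequality are precisely what the paper leaves implicit.
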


In particular, if $\lambda_1$ is the smallest positive eigenvalue of $-\mathcal{L}$, defined at \eqref{eq:generator}, then we have 
$$1=\rho \leq \lambda_1.$$
Indeed, on the one hand, let us recall that, as the invariant measure is reversible, $\lambda_1$ is the largest constant such that
\begin{equation}
\label{eq:var-zero}
\lim_{t\rightarrow + \infty} e^{2 \lambda t} \Vert R_t f - \nu_{N}(f) \Vert_{L^{2}(\nu_{N})}^{2} =0,
\end{equation}
for every $\lambda < \lambda_1$ and $f\in L^{2}(\nu_{N})$, where $(R_t)_{t\geq 0}$ is the semi-group generated by $\mathcal{L}$. See for instance \cite{B94,S97}. On the other hand, if $\lambda < 1$ then, by Theorem \ref{th:tl-general}, we have
\begin{align*}
e^{2 \lambda t} \Vert R_t f - \nu_{N}(f) \Vert_{L^{2}(\nu_{N})}^{2}
&=e^{2 \lambda t} \int_E \left( (\delta_\eta R_t) f - (\nu_{N} R_t) f\right)^2 \nu_{N}(d\eta)\\
&\leq 2 e^{2 \lambda t} \Vert f \Vert_\infty^2 \int_E  \mathcal{W}_{d_1} (\delta_\eta R_t, \nu_{N} R_t)^2 \nu_{N}(d\eta)\\
&\leq 2 e^{2 (\lambda-1) t} \Vert f \Vert_\infty^2 \int_E \mathcal{W}_{d_1} (\delta_\eta, \nu_{N})^2 \nu_{N}(d\eta),
\end{align*}
and then \eqref{eq:var-zero} holds. Now, the constant functions are trivially eigenvectors of $\mathcal{L}$ associated with the eigenvalue $0$, and if, for $k \in \{1,\dots,K\}$, $l \geq 1$ we set $\varphi^{(l)}_{k}:\eta\mapsto \eta(k)^{l}$ then the function $\varphi^{(1)}_k$ satisfies 
$$
\mathcal{L}\varphi^{(1)}_k= N/K - \varphi^{(1)}_k.
$$
In particular $\varphi^{(1)}_k - N/K$ is an eigenvector and $1$ is an eigenvalue of $-\mathcal{L}$. This gives $\lambda_1 \leq 1$ and finally $\lambda_1 =1$ is the smallest eigenvalue of $-\mathcal{L}$. By the reversibility, we have a Poincar\'{e} (or spectral gap) inequality
$$
\forall t \geq 0, \ \Vert R_t f - \nu_{N}(f) \Vert_{L^{2}(\nu_N)}^{2} \leq e^{-2 t} \Vert f - \nu_{N}(f) \Vert_{L^{2}(\nu_{N})}^{2}.
$$

\begin{Rq}[Complete graph random walk]
If $(a_i)_{1 \leq i \leq K}$ is a sequence such that $\sum_{i=1}^{K} a_{i}=0$ then the function $\sum_{i=1}^{K} \varphi^{(1)}_i$ is an eigenvector of $\mathcal{L}$. However, if $L$ is the generator of the classical complete graph random walk, $La = - a$ and then $a$ is also an eigenvector of $L$ with the same eigenvalue.
\end{Rq}

Let us finally give the following result on the spectrum of $\mathcal{L}$:

\begin{lem}[Spectrum of $-\mathcal{L}$]
The spectrum of $-\mathcal{L}$ is included in
$$
\left\{ \sum_{i=1}^{K} \lambda_{l_{i}} \ | \ l_1, \dots, l_K \in \{0, \dots, N\} \right\},
$$
where 
$$
\forall l \in \{0, \dots, N\}, \ \lambda_{l} = l + \dfrac{l(l-1)p}{N-1}.
$$
\end{lem}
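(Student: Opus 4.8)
The plan is to exhibit a basis of functions on $E$ in which $-\mathcal{L}$ is triangular with respect to a degree filtration, and to read the spectrum off the diagonal. I would work with the monomials $M_{\mathbf{l}}(\eta)=\prod_{k=1}^{K}\eta(k)^{l_k}$, graded by total degree $n=\sum_k l_k$, and with the associated filtration $\mathcal{P}_0\subset\mathcal{P}_1\subset\cdots$ by degree. The whole point is to show that $\mathcal{L}$ preserves this filtration and acts on the top-degree part as a scalar.

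First I would compute $\mathcal{L}M_{\mathbf l}$ from the explicit generator \eqref{eq:generator}. Expanding the shifted factors in $M_{\mathbf l}(T_{i\to j}\eta)-M_{\mathbf l}(\eta)$, this difference has degree at most $n-1$ in $(\eta(i),\eta(j))$, and multiplying by the rates $\eta(i)\bigl(\tfrac1K+\tfrac{p\,\eta(j)}{N-1}\bigr)$ raises the degree to at most $n+1$. The first key step is that the degree-$(n+1)$ contribution equals
\[
\frac{p}{N-1}\,M_{\mathbf l}\sum_{i\neq j}\bigl(l_j\,\eta(i)-l_i\,\eta(j)\bigr),
\]
which vanishes by antisymmetry of the summand under $i\leftrightarrow j$. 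Hence $\mathcal{L}$ maps $\mathcal{P}_n$ into $\mathcal{P}_n$.

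The second, and main, step is to identify the degree-$n$ part. The walk part of $\mathcal{L}$ produces, besides a diagonal term, the transport terms $\tfrac1K\sum_{i\neq j}l_j\,E_{ij}M_{\mathbf l}$, where $E_{ij}$ moves one unit of degree from site $j$ to site $i$; the interaction produces $\tfrac{p}{N-1}\sum_{i\neq j}l_j(l_j-1)E_{ij}M_{\mathbf l}$. These same-degree, off-diagonal terms are the real obstacle to triangularity, and the resolution is the conservation law $\sum_k\eta(k)=N$ valid on $E$: summing over the target site, $\sum_{i\neq j}E_{ij}M_{\mathbf l}=(N\eta(j)^{-1}-1)M_{\mathbf l}$, where $N\eta(j)^{-1}M_{\mathbf l}$ has degree $n-1$. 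Thus every transport term collapses, modulo $\mathcal{P}_{n-1}$, to a multiple of $M_{\mathbf l}$: the walk contributes a diagonal $-n$ and the interaction a diagonal $-\tfrac{p}{N-1}n(n-1)$, giving
\[
\mathcal{L}M_{\mathbf l}=-\Bigl(n+\frac{p\,n(n-1)}{N-1}\Bigr)M_{\mathbf l}+r,\qquad r\in\mathcal{P}_{n-1},\quad n=\textstyle\sum_k l_k .
\]

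Since $-\mathcal{L}$ is therefore triangular for the degree filtration and acts on each graded piece $\mathcal{P}_n/\mathcal{P}_{n-1}$ as the scalar $\lambda_n=n+p\,n(n-1)/(N-1)$, its spectrum is contained in $\{\lambda_n:0\le n\le N\}$ (degrees beyond $N$ do not occur, since the Newton basis $\prod_k(\eta(k))_{n_k}$ already spans functions on $E$ using only $\sum_k n_k\le N$). Taking $l_1=n$ and $l_2=\cdots=l_K=0$ gives $\lambda_n=\sum_i\lambda_{l_i}$, so this set is contained in $\bigl\{\sum_{i=1}^K\lambda_{l_i}:l_i\in\{0,\dots,N\}\bigr\}$, which is the claim. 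As a conceptual check, the same numbers $\lambda_l$ arise from the sampling duality with Kingman's coalescent with parent-independent mutation, whose block-counting process leaves $l$ lineages at total rate $\lambda_l=l+\binom{l}{2}\tfrac{2p}{N-1}$; I would treat the triangular computation above as the self-contained route. The main difficulty is purely the bookkeeping of the same-degree transport terms and checking that the constraint $\sum_k\eta(k)=N$ reduces all of them cleanly to lower degree.
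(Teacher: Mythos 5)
Your proof is correct, and it establishes something strictly sharper than the lemma: the spectrum of $-\mathcal{L}$ lies in $\{\lambda_n : 0\le n\le N\}$ with $n$ the \emph{total} degree, a set contained in the lemma's set upon taking $l_1=n$, $l_2=\dots=l_K=0$. The paper also triangularizes $\mathcal{L}$ on polynomials, but it filters by the degree in each variable separately and asserts that, after adding a correction of smaller multi-degree, the monomial $\prod_i\eta(i)^{l_i}$ becomes an eigenvector with eigenvalue $\sum_{i=1}^K\lambda_{l_i}$. Your computation shows this assertion cannot hold when the monomial charges at least two sites and $p>0$: the interaction jumps between two occupied sites $i\ne j$ contribute the cross term $-\tfrac{2p}{N-1}l_il_j$ to the diagonal, so the top-degree coefficient is $\lambda_{\sum_i l_i}$, not $\sum_i\lambda_{l_i}$. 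The paper's own proof of Theorem~\ref{th:cor-GC} confirms this:
$$
\mathcal{L}\big(\eta(k)\eta(l)\big)=-\Big(2+\tfrac{2p}{N-1}\Big)\eta(k)\eta(l)+\tfrac{N-1}{K}\big(\eta(k)+\eta(l)\big),
$$
whose top-degree part is $-\lambda_2\,\eta(k)\eta(l)$, whereas the paper's proof of the lemma would predict $\lambda_1+\lambda_1=2$. Concretely, for $K=N=2$ the spectrum of $-\mathcal{L}$ is $\{0,\,1,\,2+2p\}=\{\lambda_0,\lambda_1,\lambda_2\}$, so $2$ is not an eigenvalue and no nonzero function of the form $\eta(1)\eta(2)+\mathrm{const}$ can satisfy $\mathcal{L}\phi=-2\phi$; the paper's escape clause ($\phi=0$) does not apply either, since $\eta(1)\eta(2)+\mathrm{const}$ takes two distinct values on $E$. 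The statement of the lemma survives only because the set it exhibits is large enough to contain $\{\lambda_n : 0\le n \le N\}$. So your total-degree filtration --- with the two cancellations you isolate (antisymmetry in degree $n+1$, and the constraint $\sum_k\eta(k)=N$ collapsing the same-degree transport terms) together with the observation that every function on $E$ is a polynomial of total degree at most $N$ --- is not merely a different route: it is an argument that actually proves the lemma, corrects the paper's identification of the diagonal, and yields the smaller (and, in view of the $K=N=2$ case, exact) spectral set.
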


\begin{proof}

For  every $ k \in \{1, \dots, K\}$ and $l\in \{0, \dots, N\}$, we have
\begin{align*}
\mathcal{L}\varphi^{(l)}_{k}(\eta) 
&= - \lambda_{l} \varphi_{k}^{(l)}(\eta) + Q_{l-1}(\eta),
\end{align*}
where $Q_{l-1}$ is a polynomial whose degree is less than $l-1$. A straightforward recurrence shows that whether there exists or not a polynomial function $\psi^{(l)}_{k}$, whose degree is $l$, satisfying $\mathcal{L}\psi^{(l)}_{k} = - \lambda_l \psi^{(l)}_{k}$ (namely $\psi^{(l)}_{k}$ is an eigenvector of $\mathcal{L}$). Indeed, it is possible to have $\psi^{(l)}_{k} = 0$ since the polynomial functions are not linearly independent ($F$ is finite). More generally, for all  $l_1, \dots, l_K \in \{1, \dots, N\}$, there exists a polynomial $Q$ with $K$ variables, whose degree with respect to the $i^{\text{th}}$ variable is strictly less than $l_i$, such that the function $\phi: \eta \mapsto \prod_{i=1}^{K} \eta(k_{i})^{l_{i}} + Q(\eta)$ satisfies 
$$
\mathcal{L}\phi = - \lambda \phi \ \text{where} \ \lambda  = \sum_{i=1}^{K} \lambda_{l_{i}}.
$$ 
Again, provided that $\phi \neq 0$, $\phi$ is an eigenvector and $\lambda$ an eigenvalue of $-\mathcal{L}$. Finally, as the state space is finite, using multivariate Lagrange polynomial, we can prove that every function is polynomial and thus we capture all the eigenvalues.
\end{proof}

\begin{Rq}[Cardinal of $E$]
As $\textrm{card}(F^*)=K$, we have 
$$
\textrm{card}(E) = \binom{N+K-1}{K-1} = \frac{(N+K-1)!}{N!(K-1)!}.
$$
In particular, the number of eigenvalues is finite and less than $\textrm{card}(E)$.
\end{Rq}

\begin{Rq}[Marginals]
For each $k$, the random process $(\eta_t(k))_{t\geq0}$, which is a marginal of a process generated by \eqref{eq:generator}, is a Markov process on $\N_{\textrm{N}}=\{0,\dots, N\}$ generated by 
\begin{align*}
\mathcal{G} f(x) 
&= (N-x) \left(\frac{1}{K}+\dfrac{px}{N-1}\right) (f(x+1) - f(x))\\
&\quad + x \left( \frac{K-1}{K} + \frac{p(N- x)}{N-1}\right)(f(x-1) - f(x)),
\end{align*}
for every function $f$ on $\N_{\textrm{N}}$ and $x\in \N_{\textrm{N}}$. We can express the spectrum of this generator. Indeed, let $\varphi_l: x \mapsto x^l$, for every $l\geq 0$. The family $(\varphi_l)_{0 \leq l\leq N}$ is linearly independent as can be checked with a Vandermonde determinant. This family generates the $L^2-$space  associated to the invariant measure since this space has a dimension equal to $N+1$. Now, similarly to the proof of the previous lemma, we can prove the existence of $N+1$ polynomials, which are eigenvectors and linearly independent, whose eigenvalues are $\lambda_0, \lambda_1, \dots, \lambda_N$.
\end{Rq}

\section{The two point space}
\label{sect:two}

We consider a Markov chain defined on the states $\left\lbrace 0, 1, 2\right\rbrace$ where $0$ is the absorbing state. Its infinitesimal generator $G$ is defined by
$$
G=\begin{bmatrix}
   0 & 0 & 0 \\
   p_0(1) & -a - p_0(1) & a\\
   p_0(2) & b & -b - p_0(b), \\
\end{bmatrix}
$$ 
where $a,b >0$, $p_0(1),p_0(2)\geq 0$ and $p_0(1)+p_0(2)>0$. The generator of the Fleming-Viot process with $N$ particles applied to bounded functions $f: E\rightarrow \mathbb{R}$ reads 
\begin{align}
\label{eq:generator2point} 
\mathcal{L}f(\eta) 
&= \eta(1) \left( a + p_{0}(1) \dfrac{\eta(2)}{N-1} \right)  (f(T_{1 \rightarrow 2} \eta)-f(\eta))\nonumber \\
&+ \eta(2) \left( b + p_{0}(2) \dfrac{\eta(1)}{N-1} \right)  (f(T_{2 \rightarrow 1} \eta)-f(\eta)).
\end{align}

\subsection{The associated killed process}

The long time behavior of the conditionned process is related to the eigenvalues and eigenvectors of the matrix:
$$
M=
\begin{bmatrix}
   -a-p_{0}(1) & a \\
   b & -b-p_{0}(2) 
\end{bmatrix}.
$$
Indeed see \cite[section 3.1]{MV12}. Its eigenvalues are given by      
\begin{equation*}
\lambda_{+} = \dfrac{-(a+b+p_{0}(1)+p_{0}(2)) +\sqrt{(a-b+p_{0}(1)- p_{0}(2))^{2}+4ab}}{2},
\end{equation*}

\begin{equation*}
\lambda_{-} = \dfrac{-(a+b+p_{0}(1)+p_{0}(2)) -\sqrt{(a-b+p_{0}(1)- p_{0}(2))^{2}+4ab}}{2},
\end{equation*}

and the corresponding eigenvectors are respectively given by
$$  v_{+} = \left (
   \begin{array}{c}
      a \\
      -A+\sqrt{A^{2}+4ab}\\
   \end{array}
   \right ) \ \text{ and } \ v_{-} = \left (
                                        \begin{array}{c}
                                          a \\
                                          -A-\sqrt{A^{2}+4ab}\\
                                        \end{array}
                                        \right ),$$
   
where $A= a-b+p_{0}(1)- p_{0}(2)$. Also set $\nu= v_+/(v_+(1) + v_+(2))$. From these properties, we deduce that

\begin{lem}[Convergence to the QSD]
There exists a constant $C>0$ such that for every initial distribution $\mu$, we have
$$
\forall t\geq 0, \ d_{\text{\textrm{TV}}}(\mu T_t, \nu) \leq C e^{-(\lambda_+ - \lambda_-) t}.
$$
\end{lem}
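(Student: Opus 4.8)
The plan is to reduce the statement to the spectral analysis of the $2\times 2$ matrix $M$ and to compute the conditioned law in closed form. Identifying a probability measure $\mu$ on $F^{*}=\{1,2\}$ with the row vector $(\mu(1),\mu(2))$ and a function vanishing at $0$ with a column vector, the sub-Markovian evolution on the transient states is governed by $e^{tM}$, so that
$$
\mu T_t=\frac{\mu\, e^{tM}}{\mu\, e^{tM}\mathbf{1}},\qquad \mathbf{1}=(1,1)^{\mathrm{T}}.
$$
Since $ab>0$ we have $A^{2}+4ab>0$, hence $\lambda_+>\lambda_-$ are real and distinct and $M$ is diagonalizable. First I would write the spectral decomposition $e^{tM}=e^{\lambda_+t}\,\Pi_++e^{\lambda_-t}\,\Pi_-$, where $\Pi_\pm=v_\pm u_\pm/(u_\pm v_\pm)$ are the rank-one spectral projectors built from the right eigenvectors $v_\pm$ and the left eigenvectors $u_\pm$, with $\Pi_++\Pi_-=I$.

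Next I would substitute this decomposition and factor out the dominant exponential. Writing $\mu\Pi_\pm=c_\pm(\mu)\,u_\pm$ with $c_\pm(\mu)=(\mu v_\pm)/(u_\pm v_\pm)$ and $\kappa_\pm=u_\pm\mathbf{1}$, one gets
$$
\mu T_t=\frac{c_+(\mu)\,u_++e^{-(\lambda_+-\lambda_-)t}\,c_-(\mu)\,u_-}{c_+(\mu)\,\kappa_++e^{-(\lambda_+-\lambda_-)t}\,c_-(\mu)\,\kappa_-}.
$$
Letting $t\to\infty$ shows that $\mu T_t$ converges to the $\mu$-independent probability vector $u_+/\kappa_+$, which is the quasi-stationary distribution $\nu$. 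Subtracting $\nu$ and reducing to a common denominator, the leading terms cancel and a single factor carrying the decay survives:
$$
\mu T_t-\nu=\frac{e^{-(\lambda_+-\lambda_-)t}\,c_-(\mu)\,\bigl(\kappa_+u_--\kappa_-u_+\bigr)}{\kappa_+\bigl(c_+(\mu)\,\kappa_++e^{-(\lambda_+-\lambda_-)t}\,c_-(\mu)\,\kappa_-\bigr)}.
$$
Taking total variation norms (recall $d_{\textrm{TV}}=\tfrac12\|\cdot\|_1$ on signed measures) then produces the announced bound $d_{\textrm{TV}}(\mu T_t,\nu)\le C\,e^{-(\lambda_+-\lambda_-)t}$, the numerator being $e^{-(\lambda_+-\lambda_-)t}$ times a vector of bounded norm.

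The main obstacle is to make the constant $C$ uniform over all initial distributions $\mu$, which amounts to bounding the denominator away from zero. The delicate factor is $c_+(\mu)\kappa_+=\mu\Pi_+\mathbf{1}$: if it degenerated, the estimate would blow up. To control it I would invoke Perron--Frobenius: since $M$ is irreducible (because $a,b>0$), its dominant eigenvalue $\lambda_+$ has strictly positive right and left eigenvectors $v_+,u_+$, so in particular $\kappa_+=u_+\mathbf{1}>0$. Consequently $\mu v_+\ge\min_i v_+(i)>0$ for every probability measure $\mu$, so $c_+(\mu)\kappa_+$ is bounded below by a positive constant uniformly in $\mu$. Moreover the quantity $c_+(\mu)\kappa_++e^{-(\lambda_+-\lambda_-)t}c_-(\mu)\kappa_-$ equals $e^{-\lambda_+t}\,\mu e^{tM}\mathbf{1}>0$, is affine in $e^{-(\lambda_+-\lambda_-)t}\in(0,1]$, equals $1$ at $t=0$ and tends to $c_+(\mu)\kappa_+$ as $t\to\infty$, hence stays above $\min\bigl(1,\ \inf_\mu c_+(\mu)\kappa_+\bigr)>0$ for all $t\ge0$. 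Bounding the numerator above by a fixed constant (the entries of $v_\pm,u_\pm$ are fixed and $|c_-(\mu)|$ is bounded since $\mu$ ranges over the compact simplex) then yields a single constant $C$, which completes the proof.
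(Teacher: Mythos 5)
Your proof is correct, and it takes a genuinely different route from the paper: the paper does not prove this lemma at all, but simply cites \cite[Theorem 7]{MV12} and \cite[Remark 3]{MV12}. What you do is essentially unfold the argument that such a citation encapsulates for a finite-state absorbed chain: write $\mu T_t = \mu e^{tM}/(\mu e^{tM}\mathbf{1})$, diagonalize $M$ (legitimate since $ab>0$ forces $A^2+4ab>0$, so $\lambda_+>\lambda_-$ are simple), derive the exact identity
$$
\mu T_t-\nu=\frac{e^{-(\lambda_+-\lambda_-)t}\,c_-(\mu)\,\bigl(\kappa_+u_--\kappa_-u_+\bigr)}{\kappa_+\bigl(c_+(\mu)\,\kappa_++e^{-(\lambda_+-\lambda_-)t}\,c_-(\mu)\,\kappa_-\bigr)},
$$
and control the denominator uniformly in $\mu$ and $t$. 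Your two key points are exactly the ones a careful reader would need: Perron--Frobenius positivity of $u_+,v_+$ (valid since $M$ has positive off-diagonal entries, so $M+cI$ is a positive matrix for large $c$) gives $\inf_\mu c_+(\mu)\kappa_+>0$, and the observation that the normalizing factor $e^{-\lambda_+t}\mu e^{tM}\mathbf{1}$ is affine in $e^{-(\lambda_+-\lambda_-)t}\in(0,1]$ with value $1$ at $t=0$ pins its infimum at an endpoint. This buys an explicit, self-contained proof with a computable constant $C$, where the paper's citation buys brevity; the mathematical content (spectral decomposition of the sub-Markovian matrix) is the same as in the reference.

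One caveat you should make explicit: your limiting measure is the normalized \emph{left} Perron eigenvector $u_+/\kappa_+$ (the unique probability vector with $\nu M=\lambda_+\nu$, hence the unique fixed point of $T_t$), whereas the paper defines $\nu$ as the normalization of its printed vector $v_+=(a,\,-A+\sqrt{A^2+4ab})$. These do not coincide: in the symmetric case $a=b$, $p_0(1)=p_0(2)$ the QSD must be uniform, while the paper's formula gives $(1/3,2/3)$, so the paper's displayed eigenvector contains a misprint (the correct left eigenvector is proportional to $(2b,\,A+\sqrt{A^2+4ab})$). Your identification is the correct one, and the lemma as you prove it is the statement the paper intends; you should just not claim literal agreement with the paper's printed $\nu$.
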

\begin{proof}
See \cite[Theorem 7]{MV12} and \cite[Remark 3]{MV12}.
\end{proof}
Note that
$$
\lambda_+ - \lambda_-
=\sqrt{(a+b)^2 +2 (a-b)(p_{0}(1)- p_{0}(2))+(p_{0}(1)- p_{0}(2))^{2}}> a+b - (\sup(p_0)-\inf(p_0))
$$
when $\sup(p_0)>\inf(p_0)$.

\subsection{Explicit formula of the invariant distribution}
\label{sect:two-inv} 
Firstly note that, as
$$
\forall \eta \in E, \eta(1) + \eta(2) =N,
$$
each marginal of $(\eta_t)_{t\geq 0}$ is a Markov process:

\begin{lem}[Markovian marginals]
The random process $(\eta_t(1))_{t\geq0}$, which is a marginal of a process generated by \eqref{eq:generator2point}, is a Markov process generated by  $\mathcal{G}$ defined by
\begin{align}
\mathcal{G} f(n) 
&= b_n (f(n+1) - f(n)) + d_n (f(n-1) - f(n)), \label{eq:gen-G}
\end{align}
for any function $f$ and $n \in \N_{\mathrm{N}}=\{0, \dots, N\}$, where
$$
b_n=(N-n) \left( b + p_{0}(2) \dfrac{n}{N-1} \right)\text{ and } d_n = n \left( a + p_{0}(1) \dfrac{N-n}{N-1} \right).
$$
\end{lem}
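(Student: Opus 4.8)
The plan is to exploit the fact that, on the two--point space, a configuration is entirely determined by its first coordinate, so that the ``marginal'' $(\eta_t(1))_{t\ge0}$ is really the whole process written in a single coordinate. Since every $\eta\in E$ satisfies $\eta(1)+\eta(2)=N$, the map $\pi:E\to\N_{\mathrm N}$, $\pi(\eta)=\eta(1)$, is a bijection with inverse $n\mapsto(n,N-n)$. As $(\eta_t)_{t\ge0}$ is a Markov process on $E$ and $\pi$ is invertible, the filtrations generated by $(\eta_t)_{t\ge0}$ and by $(\pi(\eta_t))_{t\ge0}=(\eta_t(1))_{t\ge0}$ coincide, so $(\eta_t(1))_{t\ge0}$ is automatically a Markov process on $\N_{\mathrm N}$. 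Its generator is the transport of $\mathcal{L}$ through $\pi$, that is $\mathcal{G}g(n)=\mathcal{L}(g\circ\pi)(\pi^{-1}(n))$ for every $g:\N_{\mathrm N}\to\R$.

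It then remains to compute this transported generator explicitly. First I would fix $g:\N_{\mathrm N}\to\R$, set $f=g\circ\pi$ (so that $f(\eta)=g(\eta(1))$), and apply \eqref{eq:generator2point}. The key observation is that the two elementary moves act on the first coordinate by $\pi(T_{1\to2}\eta)=\eta(1)-1$ and $\pi(T_{2\to1}\eta)=\eta(1)+1$, so that, writing $n=\eta(1)$ and hence $\eta(2)=N-n$, one has $f(T_{1\to2}\eta)=g(n-1)$ and $f(T_{2\to1}\eta)=g(n+1)$. Substituting these identities into \eqref{eq:generator2point} yields
$$
\mathcal{L}f(\eta)=n\Big(a+p_0(1)\tfrac{N-n}{N-1}\Big)\big(g(n-1)-g(n)\big)+(N-n)\Big(b+p_0(2)\tfrac{n}{N-1}\Big)\big(g(n+1)-g(n)\big),
$$
which is exactly $\mathcal{G}g(n)$ with the death rate $d_n=n\big(a+p_0(1)(N-n)/(N-1)\big)$ attached to the move $n\to n-1$ (i.e.\ $T_{1\to2}$) and the birth rate $b_n=(N-n)\big(b+p_0(2)n/(N-1)\big)$ attached to the move $n\to n+1$ (i.e.\ $T_{2\to1}$).

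There is essentially no hard step here: the entire content is the collapse of the state space to a single coordinate, which turns the projection $\pi$ into a bijection and makes the Markov property of the marginal immediate rather than a consequence of a lumping criterion. The only point that requires care is the bookkeeping of the two jump directions, namely verifying that $T_{1\to2}$ decreases $\eta(1)$ whereas $T_{2\to1}$ increases it, so that the total rate at site $1$ becomes the death rate $d_n$ and the total rate at site $2$ becomes the birth rate $b_n$.
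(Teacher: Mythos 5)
Your proof is correct and follows the same route as the paper: it uses the fact that $\eta=(\eta(1),N-\eta(1))$, so that $\eta\mapsto\eta(1)$ is a bijection between $E$ and $\N_{\mathrm{N}}$, which makes the Markov property of the marginal immediate and reduces the statement to substituting $f(\eta)=g(\eta(1))$ into \eqref{eq:generator2point}. Your write-up simply makes explicit the generator computation and the jump-direction bookkeeping that the paper leaves to the reader.
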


\begin{proof}
For every $\eta\in E$, we have $\eta=(\eta(1), N-\eta(1))$ thus the Markov property and the generator are easily deducible from the properties of $(\eta_t)_{t \geq 0}$. 
\end{proof}

From this result and the already known results on birth and death processes \cite{CJ12, C04}, we deduce that $(\eta_t(1))_{t\geq0}$ admits an invariant and reversible distribution $\pi$ given by
$$
\pi(n) = u_0 \prod_{k=1}^n \frac{b_{k-1}}{d_k} \ \text{ and } \ u_0^{-1}= 1 + \sum_{k=1}^N \frac{b_0 \cdots b_{k-1}}{d_{1} \cdots d_{k} },
$$ 
for every $n\in \N_{\mathrm{N}}$. This gives
$$
\pi(n) = u_0 \binom{N}{n} \prod_{k=1}^n \frac{b(N-1) + (k-1) p_0(2)}{ a(N-1) + (N-k) p_0(1)},
$$
and 
$$
u_0^{-1} = 1 + \prod_{k=1}^N \frac{b(N-1) + k p_0(2)}{ a(N-1) + k p_0(1)}.
$$
Similarly, as $\eta_t(2) = N-\eta_t(1)$, the process $(\eta_t(2))_{t\geq 0}$ is a Markov process whose invariant distribution is also easily calculable. The invariant law of $(\eta_t)_{t\geq 0}$, is then given by
$$
\nu_N((r_1,r_2))=\pi\left(\{ r_1\}\right),\quad \forall (r_1,r_2) \in E.$$
Note that if $p_0$ is not constant then we can not find a basis of orthogonal polynomials in the $L^2$ space associated to $\nu_N$. It is then very difficult to express the spectral gap or the decay rate of the correlations without using our main results.

\subsection{Rate of convergence}

Applying Theorem \ref{th:tl-general}, in this special case, we find:

\begin{coro}[Wasserstein contraction]
For any processes $(\eta_{t})_{t>0}$ and $(\eta'_{t})_{t>0}$ generated by \eqref{eq:generator2point}, and
for any $t \geq 0$, we have
\begin{align*}
\mathcal{W}_{d_1} (\mathcal{L}(\eta_t), \mathcal{L}(\eta'_t)) \leq e^{-\rho t} \mathcal{W}_{d_1} (\mathcal{L}(\eta_0), \mathcal{L}(\eta'_0)),
\end{align*}
where $\rho = a+b-(\sup(p_0)-\inf(p_0)).$ In particular, when $(\eta'_{0})$ follows the invariant distribution $\nu_N$ of \eqref{eq:generator2point}, we
get for every $t > 0$
$$
\mathcal{W}_{d_1} (\mathcal{L}(\eta_t), \nu_N) \leq e^{-\rho t} \mathcal{W}_{d_1} (\mathcal{L}(\eta_0), \nu_N).
$$
\end{coro}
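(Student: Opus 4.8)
The plan is to obtain this corollary as a direct specialization of Theorem \ref{th:tl-general}, so the entire task reduces to identifying the transition rates of the two-point chain, computing the ergodic parameter $\lambda$ explicitly, and checking that the standing hypothesis $\sup(p_0)<\infty$ holds. First I would read off from the generator $G$ (and from \eqref{eq:generator2point}) that the underlying transition rate matrix on $F^*=\{1,2\}$ is given by $Q_{1,2}=a$ and $Q_{2,1}=b$, with death rates $p_0(1),p_0(2)$. Since $F^*$ is finite the function $p_0$ takes only finitely many values, so $\sup(p_0)<\infty$ is automatic and Theorem \ref{th:tl-general} applies verbatim.

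The only genuine computation is the evaluation of
$$
\lambda = \inf_{i,i'\in F^*}\left( Q_{i,i'} + Q_{i',i} + \sum_{j\neq i,i'} Q_{i,j}\wedge Q_{i',j}\right)
$$
in this two-point setting. Because $F^*$ has exactly two elements, the relevant pair $\{i,i'\}$ (the pair of distinct sites appearing in the coupling bound for $\mathbb{L}_Q d_1$) is forced to be $\{1,2\}$, and the sum over $j\neq i,i'$ is empty. Hence the bracketed quantity equals $Q_{1,2}+Q_{2,1}=a+b$, giving $\lambda=a+b$. Substituting into the definition $\rho=\lambda-(\sup(p_0)-\inf(p_0))$ from Theorem \ref{th:tl-general} yields exactly $\rho=a+b-(\sup(p_0)-\inf(p_0))$, which is the rate claimed in the statement.

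With $\lambda$ identified, the first Wasserstein contraction inequality is precisely the conclusion of Theorem \ref{th:tl-general} applied to any two processes generated by \eqref{eq:generator2point}. For the second inequality I would invoke the existence and uniqueness of the invariant distribution $\nu_N$: the state space $E=\{(n,N-n): 0\leq n\leq N\}$ is finite and the chain is irreducible (as already used in Subsection \ref{sect:two-inv} to exhibit $\nu_N$ explicitly), so a unique stationary law exists. Choosing the second process with $\eta'_0\sim\nu_N$, so that $\mathcal{L}(\eta'_t)=\nu_N$ for all $t\geq 0$, and inserting this into the contraction gives the stated bound $\mathcal{W}_{d_1}(\mathcal{L}(\eta_t),\nu_N)\leq e^{-\rho t}\mathcal{W}_{d_1}(\mathcal{L}(\eta_0),\nu_N)$.

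I do not expect any real obstacle here, since the result is a specialization rather than a new argument; the one point requiring care is the reduction of the infimum defining $\lambda$ to the single pair $\{1,2\}$ with a vanishing cross-term sum, ensuring no spurious contribution (for instance from a degenerate choice $i=i'$) enters and that the value is genuinely $a+b$ rather than a smaller quantity.
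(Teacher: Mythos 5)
Your proposal is correct and matches the paper's own treatment: the paper proves this corollary simply by specializing Theorem \ref{th:tl-general} to the two-point chain, exactly as you do, with $\lambda=Q_{1,2}+Q_{2,1}=a+b$ (the cross-term sum being empty) and hence $\rho=a+b-(\sup(p_0)-\inf(p_0))$. Your two points of care --- restricting the infimum defining $\lambda$ to distinct pairs $i\neq i'$ (as is implicit in the coupling bound for $\mathbb{L}_Q d_1$), and justifying the existence of $\nu_N$ by irreducibility on the finite state space rather than by the $\rho>0$ clause of the theorem --- are both the right readings and consistent with Subsection \ref{sect:two-inv}.
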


This result is not optimal. Nevertheless, the error does not come from our coupling choice but it comes from how we estimate the distance. Indeed, this coupling induces a coupling between two processes generated by $\mathcal{G}$ defined by \eqref{eq:gen-G}. More precisely, let $\mathbb{L} = \mathbb{L}_Q + \mathbb{L}_{p}$ be the generator of our coupling introduced in the proof of Theorem \ref{th:tl-general} in this special case. We set $\mathbb{G} =\mathbb{G}_Q + \mathbb{G}_{p}$, where for any $n,n'\in \N_N$ and $f$ on $E \times E$,
$$
\mathbb{L}_Q f((n,N-n),(n',N-n'))= \mathbb{G}_Q \varphi_f(n,n'),
$$
$$
\mathbb{L}_p f((n,N-n),(n',N-n'))= \mathbb{G}_p \varphi_f(n,n'),
$$
and $\varphi_f(n,n') =f((n,N-n),(n',N-n'))$. It satisfies, for any function $f$ on $\N_{\mathrm{N}}$ and $n'> n$ two elements of $\N_{\mathrm{N}}$,

\begin{align*}
\mathbb{G}_{Q} f(n,n')
&= na \left( f(n-1,n'-1) - f(n,n') \right)\\
&+ (N-n')b \left( f(n+1,n'+1) - f(n,n') \right)\\
&+ (n'-n)b \left( f(n+1,n') - f(n,n') \right)\\
&+ (n'-n)a \left( f(n,n'-1) - f(n,n') \right),
\end{align*}
and
\begin{align*}
\mathbb{G}_{p} f(n,n')
&= p_0(1) \frac{n (N-n')}{N-1} \left( f(n-1,n'-1) - f(n,n') \right)\\
&+ p_0(2) \frac{n (N-n')}{N-1} \left( f(n+1,n'+1) - f(n,n') \right)\\
&+ p_0(1) \frac{n (n'-n)}{N-1} \left( f(n-1,n') - f(n,n') \right)\\
&+ p_0(2) \frac{(N-n')(n'-n)}{N-1} \left( f(n,n'+1) - f(n,n') \right)\\
&+ p_0(2) \frac{n(n'-n)}{N-1} \left( f(n+1,n') - f(n,n') \right)\\
&+ p_0(1) \frac{(N-n')(n'-n)}{N-1} \left( f(n,n'-1) - f(n,n') \right).
\end{align*}
Now, for any sequence of positive numbers $(u_k)_{k\in \{0, \dots, N-1\}}$, we introduce the distance $\delta_u$ defined by
$$
\delta_u (n,n') = \sum_{k=n}^{n'-1} u_k,
$$
for every $n,n' \in \N_{\mathrm{N}}$ such that $n'>n$. For all $n\in \N_{\mathrm{N}}\backslash \{N\}$, we have $\mathbb{G} \delta_u(n,n+1) \leq - \lambda_u \delta_u(n,n+1)$ where
$$
\lambda_u = \min_{k\in \{0, \dots, N-1\}} \left[ d_{k+1} - d_k \frac{u_{k-1}}{u_k} + b_k - b_{k+1} \frac{u_{k+1}}{u_{k}}\right],
$$ 
and thus, by linearity, $\mathbb{G} \delta_u(n,n') \leq - \lambda_u \delta_u(n,n')$, for every $n,n' \in \N_{\mathrm{N}}$. This implies that for any processes $(X_{t})_{t\geq 0}$ and $(X'_{t})_{t\geq 0}$ generated by $\mathcal{G}$ , and for any $t \geq 0$,
$$
\mathcal{W}_{\delta_u} (\mathcal{L}(X_t), \mathcal{L}(X'_t)) \leq e^{-\lambda_u t} \mathcal{W}_{\delta_u} (\mathcal{L}(X_0), \mathcal{L}(X'_0)).
$$
Note that, for every $n,n' \in \N_{\mathrm{N}}$, we have
$$
\min(u) d_1((n,N-n),(n',N-n')) \leq \delta_u(n,n') \leq \max(u) d_1((n,N-n),(n',N-n')),
$$
and then for any processes $(\eta_{t})_{t\geq 0}$ and $(\eta'_{t})_{t\geq 0}$ generated by \eqref{eq:generator2point}, and for any $t \geq 0$, we have
$$
\mathcal{W}_{d_1} (\mathcal{L}(\eta_t), \mathcal{L}(\eta'_t)) \leq \frac{\max(u)}{\min(u)} e^{-\lambda_u t} \mathcal{W}_{d_1} (\mathcal{L}(\eta_0), \mathcal{L}(\eta'_0)).
$$
Finally, using \cite[Theorem 9.25]{C04}, there exists a positive sequence $v$ such that $\lambda_v = \max_u \lambda_u>0$ is the spectral gap of the birth and death process $(\eta_t(1))_{t\geq 0}$. These parameters depend on $N$ and so we should write the previous inequality as
\begin{equation}
\label{eq:gap-2pts}
\mathcal{W}_{d_1} (\mathcal{L}(\eta_t), \mathcal{L}(\eta'_t)) \leq C(N) e^{-\lambda_N t} \mathcal{W}_{d_1} (\mathcal{L}(\eta_0), \mathcal{L}(\eta'_0)),
\end{equation} 
where $C(N)$ and $\lambda_N$ are two constants depending on $N$. In conclusion, the coupling introduced in Theorem \ref{th:tl-general} gives the optimal rate of convergence but we are not able to express a precise expression of $\lambda_N$ and $C(N)$. Nevertheless, in the section that follows, we will prove that, whatever the value of the parameters, the spectral gap is always bounded from below by a positive constant not depending on $N$.

\subsection{A lower bound for the spectral gap}
In this subsection, we study the evolution of $(\lambda_N)_{N\geq 0}$. Calculating $\lambda_N$ for small value of $N$ (it is the eigenvalue of a small matrix) and some different parameters show that, in general, this sequence is not monotone and seems to converge to $\lambda_+ - \lambda_-$. We are not able to prove this, but as it is trivial that for all $N\geq 0$, $\lambda_N>0$, we can hope that it is bounded from below. The aim of this section is to prove this fact.

Firstly, using similar arguments of subsection \ref{sect:spectre}, we have $\lambda_N \geq \rho$, for every $N\geq 0$. This result does not give us information in the case $\rho \leq 0$. However, we can use Hardy's inequalities \cite[Chapter 6]{Toulouse} and mimic some arguments of \cite{M99} to obtain:

\begin{theo}[A lower bound for the spectral gap]
\label{th:spectral gap2}
If $\rho \leq 0$ then there exists $c>0$ such that  
$$
\forall N \geq 0, \ \lambda_N > c.
$$  
\end{theo}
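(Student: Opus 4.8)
The plan is to identify $\lambda_N$ with the spectral gap of the reversible birth--death process generated by $\mathcal{G}$ at \eqref{eq:gen-G}, and to bound it from below through the discrete Hardy inequalities, adapting the computation of \cite{M99}. Since $\pi$ is reversible, the associated Dirichlet form is
$$
\mathcal{E}(f) = \sum_{n=0}^{N-1} c_n\,(f(n+1)-f(n))^2, \qquad c_n := \pi(n) b_n = \pi(n+1) d_{n+1},
$$
and $\lambda_N = \inf\{\mathcal{E}(f)/\var_\pi(f) : f \text{ non constant}\}$. Fix a median $n_0=n_0(N)$ of $\pi$, i.e. an index with $\pi(\{0,\dots,n_0\})\ge 1/2$ and $\pi(\{n_0,\dots,N\})\ge 1/2$. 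By Muckenhoupt's criterion and Miclo's splitting argument (see \cite{Toulouse} and \cite{M99}) one has $\lambda_N \ge (4\max(B_+,B_-))^{-1}$, where
$$
B_+ = \sup_{m>n_0} \Big(\sum_{n=m}^{N}\pi(n)\Big)\Big(\sum_{n=n_0}^{m-1}\frac{1}{c_n}\Big),
\qquad
B_- = \sup_{m<n_0}\Big(\sum_{n=0}^{m}\pi(n)\Big)\Big(\sum_{n=m}^{n_0-1}\frac{1}{c_n}\Big).
$$
It therefore suffices to prove $\sup_N \max(B_+(N),B_-(N))<\infty$; the theorem then follows, the finitely many values of $N$ on which the asymptotic estimate is not yet effective causing no problem since each $\lambda_N>0$. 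This is the relevant substitute for the bound $\lambda_N\ge\rho$, which is vacuous when $\rho\le 0$.

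First I would locate the concentration of $\pi$. Writing $\varphi(n)=\pi(n+1)/\pi(n)=b_n/d_{n+1}$, an explicit computation gives
$$
\varphi(n) = \frac{(N-n)\big(b(N-1)+p_0(2)\,n\big)}{(n+1)\big(a(N-1)+p_0(1)(N-n-1)\big)},
$$
which is strictly decreasing in $n$ and crosses the value $1$ at a single point $n^\ast=x^\ast N+O(1)$, where $x^\ast\in(0,1)$ solves $(1-x)(b+p_0(2)x)=x(a+p_0(1)(1-x))$. Hence $\pi$ is unimodal with mode near $n^\ast$. The quantitative input is a pair of Gaussian-type bounds, uniform in $N$: there are constants $0<c_1\le c_2$, depending only on $a,b,p_0$, with
$$
\pi(n) \le c_2\, N^{-1/2}\exp\!\Big(-c_1\frac{(n-n^\ast)^2}{N}\Big),
\qquad
c_n \ge c_1\, N^{1/2}\exp\!\Big(-c_2\frac{(n-n^\ast)^2}{N}\Big),
$$
together with $\pi(n^\ast)\ge c_1 N^{-1/2}$. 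These are obtained by controlling $\log\varphi$: on the relevant range one has $|\log\varphi(n)|\ge c\,|n-n^\ast|/N$, so summing $\log\varphi$ yields the quadratic exponents, while near $n^\ast$ the rates satisfy $b_n,d_n\asymp N$.

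With these estimates both Hardy suprema are governed by the same cancellation. For $m=n^\ast+t$ with $t>0$, the tail $\sum_{n\ge m}\pi(n)$ is bounded by a Gaussian tail $\lesssim\exp(-c_1 t^2/N)$, whereas the reciprocal-conductance sum $\sum_{n=n_0}^{m-1}c_n^{-1}$ grows at most like $N^{-1/2}\cdot(\text{polynomial in }t,N)\cdot\exp(c_2 t^2/N)$, the exponential coming from its largest term. The product of the two stays bounded, uniformly in $t$ and $N$, because the competing exponentials cancel and the surviving prefactors combine to an $O(1)$ quantity (heuristically $\sigma^4/(Nt^2)$ with $\sigma\asymp N^{1/2}$, which is $O(1)$ in the tail regime $t\gtrsim\sqrt N$ and manifestly $O(1)$ for small $t$). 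The same estimate bounds $B_-$ to the left of $n_0$, giving $\sup_N\max(B_+,B_-)<\infty$ and hence the claim.

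The main obstacle is precisely the uniform-in-$N$ Gaussian control of $\pi$ and of $c_n$ over the entire range $\{0,\dots,N\}$, including the boundary regions near $0$ and $N$ where the quadratic rates degenerate ($b_n\to0$, $d_n\to0$) and the local expansion of $\log\varphi$ is no longer accurate; there $1/c_n$ may be large, and one must check separately that the doubly-small tails of $\pi$ more than compensate so that these terms contribute nothing to the suprema. Matching the polynomial prefactors of the two competing sums so that their product is genuinely bounded, rather than merely $O(N^{\varepsilon})$, is the delicate point, and it is here that the argument follows most closely the discrete Hardy computation of \cite{M99}.
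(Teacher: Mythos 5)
Your skeleton coincides with the paper's: reduce to the birth--death chain \eqref{eq:gen-G}, invoke the Hardy/Muckenhoupt bound $\lambda_N \ge \left(4\max(B_+,B_-)\right)^{-1}$ from \cite{M99}, use unimodality of $\pi$ to center the estimate near the mode, and dispose of the finitely many small $N$ by $\lambda_N>0$. The gap is in the quantitative step. Your two Gaussian-type bounds carry \emph{different} constants in the exponent ($c_1$ in the upper bound on $\pi$, $c_2$ in the lower bound on $c_n$, with $c_1\le c_2$), so in the Hardy product the exponentials do not cancel: writing $t=m-n^\ast$, the tail of $\pi$ contributes $\exp(-c_1t^2/N)$ while the dominant term of $\sum 1/c_n$ contributes $\exp(+c_2(t-1)^2/N)$, leaving a factor of order $\exp((c_2-c_1)t^2/N)$, which is unbounded in the regime $t\gg\sqrt N$ (and the supremum defining $B_+$ does range over such $m$). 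The cancellation you invoke would require $c_1=c_2$, and that is not attainable: near $n=N$ the birth rate $b_n$ vanishes linearly, $\pi$ decays at factorial (super-Gaussian) speed, and no Gaussian lower bound on $c_n=\pi(n)b_n$ with a fixed constant $c_2$ holds there uniformly in $N$. This is exactly the boundary issue you flag as ``delicate'', but it is not a residual technical check; it is the point where the proof, as designed, fails.

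The paper's proof avoids absolute bounds on $\pi$ altogether, and this is what makes it close. From unimodality (Lemma \ref{lem: decreasing}) and the explicit ratio \eqref{expression} it extracts only a \emph{relative} block-decay estimate: with $m_N=\lfloor i_1\rfloor +1$ and $l_N = 2(\lfloor\sqrt N\rfloor+1)$, one has $\pi(i+l_N)\le e^{-\delta_1}\pi(i)$ for all $i\ge m_N$ (and symmetrically to the left of $m_N$), because the ratio $\pi(i+1)/\pi(i)$ is monotone and already $\le 1-\delta_1/\sqrt N$ once $i$ is about $\sqrt N$ past the mode, so telescoping over $\sqrt N$ steps gives a fixed factor $e^{-\delta_1}$. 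Feeding this into $B_{N,+}(m_N)$, both factors are controlled by the value of $\pi$ at the \emph{same} point $x$: $\sum_{y=m_N+1}^{x}1/\pi(y)\le l_N\,\pi(x)^{-1}(1-e^{-\delta_1})^{-1}$ and $\pi([x,N])\le l_N\,\pi(x)\,(1-e^{-\delta_1})^{-1}$, so $\pi(x)$ cancels exactly; combined with $d_y\ge\gamma N$ for $y\ge m_N+1$, this yields $B_{N,+}(m_N)\lesssim l_N^2/(\gamma N)=O(1)$ uniformly over the whole range of $x$, boundary included, with never a need for absolute Gaussian control. If you replace your two-sided Gaussian estimates by this relative decay (which is what your inequality $|\log\varphi(n)|\ge c\,|n-n^\ast|/N$ actually gives at scale $\sqrt N$), your argument goes through.
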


The rest of this subsection aims to prove this result. Hardy's inequalities are mainly based on the estimation of the quantities $B_{N,+}$ and $B_{N,-}$ be defined for every $i\in \N$ by
\begin{equation}
\label{eq:def-B}
 B_{N,+} (i) = \max_{x>i}\left( \sum_{y=i+1}^{x} \dfrac{1}{\pi(y) d_y}\right) \pi([x,N]),
\end{equation}
and
$$
 B_{N,-}(i) = \max_{x<i}\left( \sum_{y=x}^{i-1} \dfrac{1}{\pi(y) b_y}\right) \pi([1,x]).
$$

We recall that $\pi=\pi_N$ is the invariant distribution defined in Subsection \ref{sect:two-inv} and jumps rates $b$ and $d$ also depend on $N$.

 More precisely, \cite[Proposition 3]{M99} shows that if one wants to get a "good" lower bound of the spectral gap, one only needs to guess an "adequate choice" of $i$ and to apply the estimate
$$
\lambda_N \geq \frac{1}{4\max\{B_{N,+}(i) , B_{N,-}(i)\}}.
$$
So, we have to find an upper bound for these two quantities. Before to give it, let us prove that the invariant distribution $\pi$ is unimodal. Indeed, it will help us to choose an appropriate $i$.

\begin{lem}[Unimodality of $\pi$]
\label{lem: decreasing}
The sequence $(\pi(i+1)/\pi(i))_{i\geq 0}$ is decreasing. 
\end{lem}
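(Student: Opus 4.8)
My plan is to reduce everything to the reversibility of $\pi$ recorded in Subsection \ref{sect:two-inv}. Detailed balance for the birth-death generator $\mathcal{G}$ reads $\pi(i)\,b_i=\pi(i+1)\,d_{i+1}$, so that
$$
\frac{\pi(i+1)}{\pi(i)}=\frac{b_i}{d_{i+1}}=\frac{(N-i)\big(b(N-1)+p_0(2)\,i\big)}{(i+1)\big(a(N-1)+p_0(1)(N-1-i)\big)},\qquad 0\le i\le N-1.
$$
Thus the lemma is equivalent to the monotonicity in $i$ of this explicit rational function. A first observation is that it factors as $\tfrac{N-i}{i+1}$, which is visibly decreasing, times $\tfrac{b(N-1)+p_0(2)i}{a(N-1)+p_0(1)(N-1-i)}$, whose numerator is nondecreasing and whose denominator is nonincreasing, hence \emph{increasing}. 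Because the two factors vary in opposite directions, monotonicity of the product is not formal and a genuine computation is required.

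Next I would turn the claim into a polynomial inequality. Writing $r(i)=b_i/d_{i+1}$ and clearing the strictly positive denominators, the inequality $r(i+1)\le r(i)$ is equivalent to
$$
\Delta(i):=b_i\,d_{i+2}-b_{i+1}\,d_{i+1}\ \ge\ 0,\qquad 0\le i\le N-2.
$$
A priori $\Delta$ is a quartic in $i$, but the elementary identity $(N-i)(i+2)-(N-i-1)(i+1)=N+1$ lets one collect the products $b_id_{i+2}$ and $b_{i+1}d_{i+1}$ so that their top-degree contributions telescope into a single multiple of $(N+1)$. After this simplification the leading terms of the two remaining cubic pieces cancel as well, and $\Delta$ collapses to a \emph{quadratic} in $i$ whose coefficients are explicit in $a,b,p_0(1),p_0(2)$ and $N$. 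The task is then to check that this quadratic stays nonnegative on the integer range $0\le i\le N-2$.

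The verification of this quadratic inequality is the crux, and the step I expect to be delicate. It is not a sum of manifestly nonnegative monomials: the interaction contributions carried by $p_0(1)$ and $p_0(2)$ enter with both signs, and the naive groupings one is tempted to form (for instance separating the purely diffusive part, carried by $a,b$, from the interaction part) have indefinite sign taken individually. My strategy would be to exploit that each of the factors $N-i$, $i+1$, $i+2$, $N-1-i$, $b(N-1)+p_0(2)i$ and $a(N-1)+p_0(1)(N-1-i)$ is strictly positive on the relevant range, and then to control the quadratic through the signs of its values at the endpoints $i=0$ and $i=N-2$ together with the location of its vertex (the interior minimum, since the leading coefficient is positive). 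Showing that the diffusive terms dominate the interaction terms at this minimum is precisely where the real content of the lemma sits, and is the main obstacle I would anticipate.
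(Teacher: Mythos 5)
Your setup coincides exactly with the paper's: detailed balance gives $\pi(i+1)/\pi(i)=b_i/d_{i+1}$, and monotonicity of this ratio is equivalent to $\Delta(i)=b_i\,d_{i+2}-b_{i+1}\,d_{i+1}\ge 0$; the quantity the paper calls $\Lambda_N(i)$ is precisely $-(N-1)^2\Delta(i)$. Your structural observations (the quartic collapses to a quadratic in $i$) are also correct. But the proposal stops exactly where the content of the lemma begins: you never verify the quadratic inequality, you only sketch a strategy (signs at $i=0$ and $i=N-2$ plus control of the interior minimum) and explicitly flag its execution as ``the main obstacle I would anticipate.'' Since that verification \emph{is} the proof, what you have is a correct reduction plus an unproved claim --- a genuine gap, not a routine omission. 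The paper closes this step not by any vertex or discriminant analysis, but by an explicit regrouping: in your notation,
\begin{align*}
(N-1)^2\,\Delta(i)&=\bigl[b(N-1)-p_0(2)\bigr]\Bigl[(N+1)\bigl(a(N-1)-p_0(1)\bigr)+p_0(1)(N-i)(N-i-1)\Bigr]\\
&\quad+p_0(2)\,(i+1)(i+2)\,\bigl(a(N-1)-p_0(1)\bigr),
\end{align*}
a combination of two products of factors of definite sign. Contrary to your expectation, a manifestly signed grouping does exist --- it just is not the ``diffusive versus interaction'' split you dismissed, but one organized around the quantities $a(N-1)-p_0(1)$ and $b(N-1)-p_0(2)$.

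There is a second, more substantive problem: the unconditional nonnegativity you set out to prove on all of $0\le i\le N-2$ is actually false, so no endpoint-plus-vertex argument can establish it without additional hypotheses. For instance, with $N=2$, $a=b=1$, $p_0(1)=p_0(2)=3$ one computes $\pi(1)/\pi(0)=b_0/d_1=1/2$ and $\pi(2)/\pi(1)=b_1/d_2=2$, so the ratio increases and $\Delta(0)<0$. The factorization above makes visible exactly which sign conditions are needed, namely $a(N-1)\ge p_0(1)$ and $b(N-1)\ge p_0(2)$ (automatic for $N$ large, which is all that matters for the application in Theorem \ref{th:spectral gap2}, where one works with $N\ge N_0$); the paper's conclusion $\Lambda_N(i)\le 0$ implicitly relies on them. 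Any completion of your plan would have to discover and impose these conditions, and that discovery is precisely the computation you left undone.
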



\begin{proof}[Proof of Lemma \ref{lem: decreasing}]
For all $i \in \{1, \dots, N\}$, we set 
$$
g(i)= \dfrac{\pi(i+1)}{\pi(i)}=\dfrac{(N-i)(b(N-1)+ip_0(2))}{(i+1)((a+ p_0(1))(N-1)-ip_0(1))}.
$$
It follows that 

\begin{align*}
g(i+1)-g(i)  
&= \dfrac{\Lambda_N(i)}{(i+1)((a+ p_0(1))(N-1)-i p_0(1))(i+2)((a+ p_0(1))(N-1)-(i+1) p_0(1))}
\end{align*}

where 
\begin{align*}
\Lambda_N(i)
&= (N-i-1)(b(N-1)+(i+1)p_0(2))(i+1)((a+ p_0(1))(N-1)-i p_0(1)) \\
&\quad \ - (N-i)(b(N-1)+ip_0(2))(i+2)((a+ p_0(1))(N-1)-(i+1) p_0(1))\\
&= - \left[ b(N-1)- p_0(2) \right] \left[ (N+1) \left(a(N-1)- p_0(1) \right) + p_0(1) (N-i)(N-i-1) \right]\\
&\quad  \ - p_0(2) \left(i^{2} + 3i +2 \right) \left( a(N-1) - p_0(1) \right)\\
& \leq 0.
\end{align*}

We deduce the result.
\end{proof}


\begin{proof}[Proof of Theorem \ref{th:spectral gap2}]
Without less of generality, we assume that $p_0(1)\geq  p_0(2)$ and we recall that $\rho \leq 0$. We would like to know where $\pi$ reaches its maximum $i^*$ since it will be a good candidate to estimate $B_{N,+}(i^*)$ and $B_{N,-}(i^*)$. From the previous lemma, to find it, we look when $\pi(i+1)/\pi(i)$ is close to one. We have, for all $i\in \{ 1, \dots, N\}$,

\begin{align}
\dfrac{\pi(i+1)}{\pi(i)} 
&= \dfrac{b_i}{d_{i+1}}=1 + \dfrac{(p_0(1)-p_0(2))(i-i_1)(i-i_2)}{(i+1)\left((a+p_0(1))(N-1)-ip_0(1)\right)}\label{expression},
\end{align}
where $i_1$ and $i_2$ are the two real numbers given by 
$$
i_1 = \dfrac{ N (a+b+ p_0(1)-p_0(2)) - (a+b+ 2p_0(1)) - \sqrt{\Delta}}{2(p_0(1)-p_0(2))}
$$
and
$$
i_2 = \dfrac{ N (a+b+ p_0(1)-p_0(2)) - (a+b+ 2p_0(1)) + \sqrt{\Delta}}{2(p_0(1)-p_0(2))},
$$
where 
\begin{align*}
\Delta &=[N(a+b+p_0(1) - p_0(2)) - (a+b+ 2p_0(1))]^2 \\
&\quad  \ - 4 (N-1)(bN-a - p_0(1))(p_0(1)-p_0(2)).
\end{align*}
In particular,  $1\leq i_1 \leq N \leq i_2$. Furthermore, if $\lfloor . \rfloor$ denotes the integer part then
$$
\frac{\pi(\lfloor i_1 \rfloor + 2 )}{\pi(\lfloor i_1 \rfloor +1)}  \leq 1 \leq \frac{\pi(\lfloor i_1 \rfloor +1)}{\pi(\lfloor i_1 \rfloor )}.
$$

Let us define $m_N = \lfloor i_1 \rfloor +1$ and $l_N= 2(\lfloor \sqrt{N} \rfloor +1)$. Using a telescopic product, we have
$$
\frac{\pi(m_N+l_N)}{\pi(m_N)} = \frac{\pi(m_N+l_N-\lfloor \sqrt{N} \rfloor -1)}{\pi(m_N)} \prod_{j=1}^{\lfloor \sqrt{N} \rfloor +1} \frac{\pi(m_N+l_N-j + 1)}{\pi(m_N+l_N-j)},
$$
Using Lemma \ref{lem: decreasing} and the previous calculus, we have that the sequences $(\pi(i))_{i \geq m_N}$ and $(\pi(i+1)/\pi(i))_{i \geq 0}$ are decreasing and then
$$
\frac{\pi(m_N+l_N)}{\pi(m_N)} \leq \left( \frac{\pi(m_N+l_N- \lfloor \sqrt{N} \rfloor )}{\pi(m_N+l_N-\lfloor \sqrt{N} \rfloor  -1)} \right)^{\lfloor \sqrt{N} \rfloor +1}.
$$

Now using \eqref{expression} and some equivalents, there exists a constant $\delta_1>0$ (not depending on $N$) such that
\begin{align*}
\frac{\pi(m_N+l_N- \lfloor \sqrt{N} \rfloor )}{\pi(m_N+l_N-\lfloor \sqrt{N} \rfloor  -1)}
&\leq 1- \dfrac{\delta_1}{\sqrt{N}}.
\end{align*}
Using the fact that $1-x \leq e^{-x}$ for all $x \geq 0$, we finally obtain $\pi(m_N+l_N)/\pi(m_N) \leq e^{-\delta_1}$.
Similar arguments entail the existence of $\delta_2>0$ (also not depending on $N$) such that $\pi(m_N-l_N)/\pi(m_N) \leq e^{-\delta_2}$.
In conclusion, using Lemma \ref{lem: decreasing}, we have shown  that for all $i \geq m_N$ and $j \leq m_N$, the following inequalities holds:
$$
\pi(i+l_N) \leq e^{-\delta_1} \pi(i) \ \text{ and } \  \pi(j-l_N) \leq e^{-\delta_2} \pi(j).
$$
We are now armed to evaluate $B_{N,+}(m_N)$ defined in \eqref{eq:def-B}. Firstly, using the expressions of the death rate $d$ and $m_N$, there exist $\gamma>0$ (not depending on $N$) and $N_0 \geq 0$ such that for all $N \geq N_0$ and all $i \geq m_N+1 $, $d_i \geq \gamma N$. Let us fix $x\geq m_N+1$, using that $(\pi(i))_{i\geq m_N}$ is decreasing,  we have
\begin{align*}
\sum_{y=m_N+1}^{x} \frac{1}{\pi(y)}
&= \sum_{\{i,k | m_N +1 \leq k-i l_N \leq x\}}  \frac{1}{\pi(k-i l_N)} \\ 
& \leq \sum_{\{i,k | m_N +1 \leq k-i l_N \leq x\}} \frac{e^{-\delta_1 i}}{\pi(k)} \\
& \leq \dfrac{1}{1-  e^{-\delta_1 }}  \sum_{k=x-l_N +1}^{x} \dfrac{1}{\pi(k)}\\ 
& \leq \dfrac{l_N}{\pi(x)} \dfrac{1}{1-  e^{-\delta_1}}.
\end{align*}
Similarly, we have
$$
\pi([x,N]) =\sum_{\{k,i | x\leq k+i l _N \leq N \}} \un_{\{x+il_N \leq N\}} \Pi_N(k+il_N) \leq \frac{l_N \pi(x) }{1-  e^{-\delta_1}}.
$$
Using these three estimates, we deduce that, for every $N\geq N_0$, 
$$
B_{N,+}(m_N) \leq \frac{1}{\gamma N} \left( \dfrac{l_N}{1-  e^{-\delta_1}}\right)^{2} \leq \frac{1}{\gamma N} \left( \dfrac{2(\sqrt{N}+1)}{1-  e^{-\delta_1}}\right)^{2} \leq \frac{16}{\gamma (1-e^{-\delta_1})}. 
$$
The study of $B_{N,-}(m_N)$ is similar.
\end{proof}

\subsection{Correlations}

Using Theorem \ref{th:cor-gen}, we have

\begin{coro}[Correlations]
If $(\eta_t)_{t\geq 0}$ is a process generated by \eqref{eq:generator2point} then we have for all $t\geq 0$,
$$
\cov(\eta_{t}(k)/N,\eta_{t}(l)/N) \leq \frac{2}{N^2} \frac{1 - e^{- 2\rho t}}{\rho} \left(N (a \vee b) +  \sup(p_0) \frac{N^2}{N-1}\right).
$$
\end{coro}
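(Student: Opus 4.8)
The plan is to invoke Theorem \ref{th:cor-gen} directly, after identifying the constants of Assumption $(A)$ in the two-point setting and choosing the right Lipschitz test functions. First I would record these constants. Since $F^* = \{1,2\}$ with $Q_{1,2} = a$ and $Q_{2,1} = b$, the sum $\sum_{j\neq i} Q_{i,j}$ reduces to a single term, so $\mathbf{Q_1} = a \vee b$; likewise $\mathbf{p} = \sup(p_0)$, and both are finite, so Assumption $(A)$ holds. I would also note that the $\rho$ appearing in the statement is exactly the one from the Wasserstein contraction corollary for this example, namely $\rho = a + b - (\sup(p_0) - \inf(p_0))$: with only two sites the defining infimum for $\lambda$ has an empty interaction sum and collapses to $Q_{1,2} + Q_{2,1} = a + b$.

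Next, mimicking the proof of Theorem \ref{thm:cor}, I would take $\varphi_k : \eta \mapsto \eta(k)$ and $\varphi_l : \eta \mapsto \eta(l)$. Because $d_1(\eta,\eta') = \tfrac12 \sum_j |\eta(j) - \eta'(j)|$, each map $\varphi_k/2$ satisfies $|\varphi_k(\eta)/2 - \varphi_k(\eta')/2| \leq d_1(\eta,\eta')$ and is therefore $1$-Lipschitz on $(E,d_1)$. Applying Theorem \ref{th:cor-gen} with $g = \varphi_k/2$ and $h = \varphi_l/2$ yields
$$
\left| \cov_\eta\!\left( \frac{\eta_t(k)}{2}, \frac{\eta_t(l)}{2} \right) \right| \leq \frac{1 - e^{-2\rho t}}{2\rho} \left( N(a \vee b) + \sup(p_0)\,\frac{N^2}{N-1} \right).
$$

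Finally I would rescale by bilinearity of the covariance: since $\cov_\eta(\eta_t(k)/N, \eta_t(l)/N) = (4/N^2)\,\cov_\eta(\eta_t(k)/2, \eta_t(l)/2)$, multiplying the displayed bound by $4/N^2$ turns the prefactor $1/(2\rho)$ into $2/(N^2\rho)$ and produces exactly the claimed inequality (with the convention $(1-e^{-2\rho t})\rho^{-1} = 2t$ when $\rho = 0$, inherited from Theorem \ref{th:cor-gen}). There is essentially no obstacle here: the corollary is a direct specialization, and the only genuine verification is the elementary identification $\mathbf{Q_1} = a \vee b$ for the two-vertex complete graph together with the trivial rescaling constants. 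As for the general estimate, the bound is uniform in time precisely when $\rho > 0$.
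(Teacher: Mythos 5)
Your proof is correct and takes exactly the paper's approach: the paper's entire proof of this corollary is the phrase ``Using Theorem \ref{th:cor-gen}'', and your argument supplies precisely the intended details --- identifying $\mathbf{Q_1} = a \vee b$, $\mathbf{p} = \sup(p_0)$ and $\rho = a+b-(\sup(p_0)-\inf(p_0))$ for the two-point space, applying Theorem \ref{th:cor-gen} to the $1$-Lipschitz functions $\varphi_k/2$, $\varphi_l/2$, and rescaling by $4/N^2$. The constants match the stated bound exactly, so there is nothing to correct.
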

If $\rho \leq 0$, the right-hand side of the previous inequality explodes as $t$ tends to infinity whereas these correlations are bounded by $1$. Nevertheless, using Theorem \ref{th:cor-gen}, Remark \ref{cor-amel} and Inequality \eqref{eq:gap-2pts}, we can prove that there exists two constants $C'(N)$, depending on $N$, and $K$, which does not depend on $N$, such that
$$
\sup_{t\geq 0} \cov(\eta_{t}(k)/N,\eta_{t}(l)/N) \leq C'(N)= \frac{K C(N)}{N \lambda_N},
$$
where $C(N)$ is defined in \eqref{eq:gap-2pts}. Even if Theorem \ref{th:spectral gap2} gives an estimate of $\lambda_N$, $C(N)$ is not (completely) explicit and we do not know if the right-hand side of the previous expression tends to $0$ as $N$ tends to infinity. This example shows the difficulty of finding explicit and optimal rates of the convergence towards equilibrium and the decay of correlations; it also illustrates that our main results are extremely useful when $\sup(p_0) \neq \inf(p_0)$.

\ \\ 
\textbf{Acknowledgement:} We would like to thank Amine Asselah and Djalil Chafa\"{i} for valuable discussions.
The work of B. Cloez was partially supported by the CIMI (Centre International de Math\'{e}matiques et d'Informatique)
 Excellence program while a postdoctoral scholar and by ANR MANEGE (09-BLAN-0215). M.-N. Thai was supported by grants from R\'{e}gion Ile-de-France.

\bibliographystyle{abbrv} 
\bibliography{ref}

\end{document}